\documentclass[12pt,reqno]{amsart}
\usepackage{txfonts}
\usepackage{amssymb}
\usepackage{amsmath}
\allowdisplaybreaks
\usepackage{enumitem}
\usepackage{xcolor}
\usepackage{eucal}
\usepackage{mathtools}
\usepackage{graphicx}
\usepackage{xcolor}
\usepackage{amsmath}
\usepackage{amscd}
\usepackage[all]{xy}           
\usepackage{tikz}
\usepackage{amsfonts,latexsym}
\usepackage{xspace}
\usepackage{epsfig}
\usepackage{float}
\usepackage{mathrsfs} 
\usepackage{color}
\usepackage{fancybox}
\usepackage{colordvi}
\usepackage{tikz}
\usetikzlibrary{positioning}
\usepackage{multicol}
\usepackage{tikz-cd}
\usepackage{colordvi}
\usepackage{ifpdf}
\usepackage{amsmath, amssymb, amsthm, mathrsfs}
\ifpdf
  \usepackage[colorlinks,final,backref=page,hyperindex]{hyperref}
\else
  \usepackage[colorlinks,final,backref=page,hyperindex,hypertex]{hyperref}
\fi
\usepackage[active]{srcltx} 

\usepackage{graphicx}
\usepackage{extarrows}

\newtheorem{theorem}{Theorem}[section]
\newtheorem{prop}[theorem]{Proposition}
\newtheorem{lemma}[theorem]{Lemma}
\newtheorem{coro}[theorem]{Corollary}
\newtheorem{prop-def}{Proposition-Definition}[section]
\newtheorem{coro-def}{Corollary-Definition}[section]

\theoremstyle{definition}
\newtheorem{defn}[theorem]{Definition}
\newtheorem{remark}[theorem]{Remark}

\newtheorem{exam}[theorem]{Example}

\newcommand{\nc}{\newcommand}
\nc{\tred}[1]{\textcolor{red}{#1}}
\nc{\tblue}[1]{\textcolor{blue}{#1}}
\nc{\tgreen}[1]{\textcolor{green}{#1}}
\nc{\tpurple}[1]{\textcolor{purple}{#1}}
\nc{\btred}[1]{\textcolor{red}{\bf #1}}
\nc{\btblue}[1]{\textcolor{blue}{\bf #1}}
\nc{\btgreen}[1]{\textcolor{green}{\bf #1}}
\nc{\btpurple}[1]{\textcolor{purple}{\bf #1}}
\nc{\NN}{{\mathbb N}}
\nc{\ncsha}{{\mbox{\cyr X}^{\mathrm NC}}} \nc{\ncshao}{{\mbox{\cyr
X}^{\mathrm NC}_0}}

\newcommand{\delete}[1]{}

\nc{\mlabel}[1]{\label{#1}}
\nc{\mcite}[1]{\cite{#1}}
\nc{\mref}[1]{\ref{#1}}
\nc{\meqref}[1]{\eqref{#1}}
\nc{\mbibitem}[1]{\bibitem{#1}}

\delete{
\nc{\mlabel}[1]{\label{#1}{\hfill \hspace{1cm}{\bf{{\ }\hfill(#1)}}}}
\nc{\mcite}[1]{\cite{#1}{{\bf{{\ }(#1)}}}}
\nc{\mref}[1]{\ref{#1}{{\bf{{\ }(#1)}}}}
\nc{\meqref}[1]{\eqref{#1}{{\bf{{\ }(#1)}}}}
\nc{\mbibitem}[1]{\bibitem[\bf #1]{#1}}
}
\font\cyr=wncyr10 
\font\scyr=wncyr6
\nc{\sha}{{\mbox{\scyr X}}}
\nc{\shap}{{\mbox{\cyrs X}}} 
\nc{\shpr}{\diamond}    
\nc{\shp}{\ast} \nc{\shplus}{\shpr^+}
\nc{\shprc}{\shpr_c}    
\nc{\dep}{\mrm{dep}} \nc{\lc}{\lfloor} \nc{\rc}{\rfloor}
\nc{\db}{\leq_{\rm db}} \nc{\bfk}{\bf k}

\nc{\cala}{{\mathcal A}} \nc{\calb}{{\mathcal B}}
\nc{\calc}{{\mathcal C}}
\nc{\cald}{{\mathcal D}} \nc{\cale}{{\mathcal E}}
\nc{\calf}{{\mathcal F}} \nc{\calg}{{\mathcal G}}
\nc{\calh}{{\mathcal H}} \nc{\cali}{{\mathcal I}}
\nc{\call}{{\mathcal L}} \nc{\calm}{{\mathcal M}}
\nc{\caln}{{\mathcal N}} \nc{\calo}{{\mathcal O}}
\nc{\calp}{{\mathcal P}} \nc{\calr}{{\mathcal R}}
\nc{\cals}{{\mathcal S}} \nc{\calt}{{\mathcal T}}
\nc{\calu}{{\mathcal U}} \nc{\calw}{{\mathcal W}} \nc{\calk}{{\mathcal K}}
\nc{\calx}{{\mathcal X}} \nc{\CA}{\mathcal{A}}

\nc{\fraka}{{\mathfrak a}} \nc{\frakA}{{\mathfrak A}}
\nc{\frakb}{{\mathfrak b}} \nc{\frakB}{{\mathfrak B}}
\nc{\frakc}{{\mathfrak c}}
\nc{\frakD}{{\mathfrak D}} \nc{\frakF}{\mathfrak{F}}
\nc{\frakf}{{\mathfrak f}} \nc{\frakg}{{\mathfrak g}}
\nc{\frakH}{{\mathfrak H}} \nc{\frakL}{{\mathfrak L}}
\nc{\frakM}{{\mathfrak M}} \nc{\bfrakM}{\overline{\frakM}}
\nc{\frakm}{{\mathfrak m}} \nc{\frakP}{{\mathfrak P}}
\nc{\frakN}{{\mathfrak N}} \nc{\frakp}{{\mathfrak p}}
\nc{\frakS}{{\mathfrak S}} \nc{\frakT}{\mathfrak{T}}
\nc{\frakX}{{\mathfrak X}}

\nc{\RR}{\mathbb{R}}  \nc{\QQ}{\mathbb{Q}} \nc{\ZZ}{\mathbb{Z}}
\nc{\lbar}{\overline}

\font\cyr=wncyr10 \font\cyrs=wncyr7
\nc{\li}[1]{\textcolor{red}{#1}}
\nc{\lir}[1]{\textcolor{red}{Li:#1}}
\nc{\yi}[1]{\textcolor{blue}{Yi: #1}}
\nc{\xing}[1]{\textcolor{purple}{Xing:#1}}
\nc{\revise}[1]{\textcolor{red}{#1}}
\nc{\lin}[1]{\textcolor{blue}{GLin:#1}}

\begin{document}

\title[Averaging pre-Lie bialgebras]{Averaging pre-Lie bialgebras}
%

\author{Lin Gao
}
\address{School of Mathematics and Statistics, Henan University, Henan, Kaifeng 475004, P.\,R. China}
\email{gaolin821000@163.com
}

\author{Mengke Yang
}
\address{School of Mathematics and Statistics, Henan University, Henan, Kaifeng 475004, P.\,R. China}
\email{1878646898@qq.com
}

\author{Yuanyuan Zhang$^{*}$
}
\footnotetext{* Corresponding author.}
\address{School of Mathematics and Statistics, Henan University, Henan, Kaifeng 475004, P.\,R. China}
\email{zhangyy17@henu.edu.cn
}

\date{\today}
\begin{abstract}
In this paper, we first introduce representations of averaging pre-Lie algebras and study their matched pairs, Manin triples, and bialgebra theories. We prove that these three notions are equivalent under certain conditions. Moreover, by introducing averaging operators on quadratic Rota-Baxter pre-Lie algebras, we show that such operators give rise to averaging pre-Lie bialgebras. Then we introduce the notion of admissible classical Yang-Baxter equations in averaging pre-Lie algebras, as well as the relative Rota-Baxter operators on averaging pre-Lie algebras, and show that the relative Rota-Baxter operators on averaging pre-Lie algebras yield symmetric solutions of admissible classical Yang-Baxter equations in averaging pre-Lie algebras. Finally, we generalize the concept of averaging Lie bialgebra introduced by Hou in~\cite[Definition~4.4]{HSZh26} (J. Algebra) and show that every averaging pre-Lie bialgebra induces an averaging Lie bialgebra within our framework.
\end{abstract}

\makeatletter
\@namedef{subjclassname@2020}{\textup{2020} Mathematics Subject Classification}
\makeatother
\subjclass[2020]{
	16W99, 
	17B38,  
	16T10} 

\keywords{averaging pre-Lie algebra, averaging pre-Lie bialgebra, classical Yang-Baxter equation, Manin triple, relative Rota-Baxter operator}

\maketitle

\tableofcontents

\setcounter{section}{0}



\section{Introduction}

The purpose of this paper is to develop a bialgebra theory for averaging pre-Lie algebras and construct averaging pre-Lie bialgebras by employing relative Rota-Baxter operators and symmetric solutions of the classical Yang-Baxter equation in averaging pre-Lie algebras.

\vspace{0.5cm}
{\noindent\bf Averaging operators.}
Let $A$ be a commutative topological algebra over the field of real numbers $\mathbb{R}$, an averaging operator $P$ is a linear continuous operator on $A$ satisfying
\[P(xP(y)) = P(x)P(y).
\]
This family of operators was initially applied implicitly by O. Reynolds~\cite{Rey41} in turbulence theory research. An important class of averaging operators used in turbulence theory is the class of averages over one portion of space-time of certain vector fields.
For example,
\[
\bar{f}(x, t) = \lim_{T \to \infty} \frac{1}{2T} \int_{-T}^{T} f(x, t + \tau) \, d\tau,
\]
the time average of a real function \( f \) defined on space-time is such an operator.
In case \( f(x, \tau) \) is an integrable function of \( \tau \) which is bounded above or below by a constant, then \( \bar{f}(x, t) \) does not depend on \( t \).
Since then, averaging operators have attracted extensive attention in the fields of algebra and mathematical physics, and have been intensively studied in various scenarios. Cao~\cite{Cao00} studied averaging operators from the perspective of general algebra. Subsequently, Aguiar~\cite{Agu00} has investigated averaging operators on various specific types of algebras, such as associative algebras, Lie algebras and Leibniz algebras. Further developments on averaging algebras can be found in~\cite{B62, GZ18, PG15, M96, ZhG25}.

\vspace{0.5cm}
{\noindent\bf Pre-Lie bialgebras.}
Duality between algebras and coalgebras is a fundamental theme in modern algebra. By endowing a single vector space with both an algebra structure and a coalgebra structure, imposing appropriate compatibility conditions between the multiplication and comultiplication, one obtains the notion of a bialgebra. Such structures play a significant role in the theory of quantum groups, integrable systems, and mathematical physics. In~\cite{CP94,Dr83}, authors investigate Lie bialgebras, which constitute the algebraic structures underlying Poisson-Lie groups and play an essential role in the study of quantum enveloping algebras. As a non-associative generalization of Lie algebras, Gerstenhaber~\cite{Ger63} and Vinberg~\cite{Vin63} introduced pre-Lie algebras in the study of deformation theory and convex homogeneous cones, respectively. Aguiar~\cite{Agu00} developed the coalgebra version while Bai~\cite{Bai08} systematically introduced the notion of pre-Lie (left-symmetric) bialgebras. For the sake of completeness and future reference, we briefly recall the definitions of pre-Lie algebras, pre-Lie coalgebras, and pre-Lie bialgebras in what follows.
\begin{defn}
A (left) \textbf{pre-Lie algebra} is a pair $(A,\circ)$ consisting of a vector space $A$ and a bilinear map $\circ : A \otimes A \to A$ (written $x \circ y$) such that for all $x,y,z \in A$,
\begin{equation}
(x \circ y)\circ z - x \circ (y \circ z)
=
(y \circ x)\circ z - y \circ (x \circ z). \mlabel{eq:preLie}
\end{equation}
\end{defn}

\begin{defn}
A \textbf{pre-Lie coalgebra} is a pair $(A,\Delta)$ consisting of a vector space $A$ and a linear map $\Delta : A \to A \otimes A$ such that for all $x \in A$,
\begin{equation}
(\Delta \otimes \mathrm{id}_A) \Delta(x) - (\mathrm{id}_A \otimes \Delta) \Delta(x)
=
(\tau \otimes \mathrm{id}_A)(\Delta \otimes \mathrm{id}_A) \Delta(x) - (\tau \otimes \mathrm{id}_A)(\mathrm{id}_A \otimes \Delta) \Delta(x), \mlabel{eq:preLieco}
\end{equation}
where $\tau(x \otimes y) = y \otimes x$ for all $x, y \in A$.
\end{defn}

\begin{defn}
A \textbf{pre-Lie bialgebra} is a triple $(A,\circ,\Delta)$ consisting of a pre-Lie algebra $(A,\circ)$ and a pre-Lie coalgebra $(A,\Delta)$
such that for all $x,y \in A$, the following conditions hold:
\begin{align*}
\Delta(x\circ y - y\circ x)
&= \bigl(L_x \otimes \mathrm{id}
   + \mathrm{id}\otimes (L_x - R_x)\bigr)\Delta(y)
 - \bigl(L_y \otimes \mathrm{id}
   + \mathrm{id}\otimes (L_y - R_y)\bigr)\Delta(x),\\
 \Delta(x\circ y) - \tau\bigl(\Delta(x\circ y)\bigr)
&= (\mathrm{id}\otimes R_y)\Delta(x)
   - \tau\bigl((\mathrm{id}\otimes R_y)\Delta(x)\bigr)
   + (L_x\otimes \mathrm{id} + \mathrm{id} \otimes L_x)\Delta(y)\\
   & \qquad - \tau\bigl((L_x\otimes \mathrm{id}+ \mathrm{id} \otimes L_x)\Delta(y)\bigr),
\end{align*}
where we define the left and right multiplication maps
$L,R : A \to \mathrm{End}(A)$ by
\[
L_x y = x \circ y
\quad \text{and} \quad
R_x y = y \circ x,
\qquad \forall x,y \in A.
\]
\end{defn}

Sheng, Tang and Zhu~\cite{ShengTangZhu21} investigated averaging operators on Lie algebras in a physical context where averaging operators are also referred to as embedding tensors, and established the corresponding cohomology theory by using derived brackets as the main tool. Hou and Cui~\cite{HC24} introduced averaging antisymmetric infinitesimal bialgebras by extending antisymmetric infinitesimal bialgebras to averaging algebras, characterized these structures via averaging Frobenius algebras and matched pairs, and gave their constructions from the Yang-Baxter equation and averaging dendriform algebras. Hou, Sheng and Zhou~\cite{HSZh26} studied averaging Lie bialgebras and showed the relations among their bialgebra structures, quadratic forms, matched pairs, and Manin triples. They also constructed the corresponding bialgebra structures by using the classical Yang-Baxter equations and Rota-Baxter operators.

\vspace{0.5cm}
{\noindent\bf The classical Yang-Baxter equation in pre-Lie algebra and Rota-Baxter operator.}
The Yang-Baxter equation was introduced by Yang~\cite{Yang67} in 1967 and  Baxter~\cite{Bax78} in 1971. Sklyanin, Takhtajan and Faddeev~\cite{FTS79} found solutions of the Yang-Baxter equation are closely related to the quantum inverse scattering method. Drinfeld~\cite{Dr83} showed that solutions of the classical Yang-Baxter equation give rise to certain Lie bialgebra structures.  Bai~\cite{Bai08} proved that solutions of the classical Yang-Baxter equation in the sense of pre-Lie algebras induce pre-Lie bialgebra structures, while Hou, Sheng and Zhou~\cite{HSZh26} showed that solutions of the classical Yang-Baxter equation in the framework of averaging Lie algebras give rise to averaging Lie bialgebra structures. Bai, Guo and Ni~\cite{BaiGN10} showed that Rota-Baxter operators on Lie algebras are also closely related to solutions of the classical Yang-Baxter equation. Kupershmidt~\cite{Kup99} introduced the notion of relative Rota-Baxter operators on Lie algebras, and a series of subsequent studies have shown that relative Rota-Baxter operators are intimately connected with solutions of the classical Yang-Baxter equation in various algebraic settings.

\vspace{0.5cm}
{\noindent\bf Main results and outline of this paper.}
As is well known, there is a relationship among dendriform algebras, associative algebras, Lie algebras, and pre-Lie algebras. Based on this, we focus primarily on the study of averaging pre-Lie bialgebra. Then, we summarize the main results in this paper by the following diagram:
\begin{figure}[ht]
\centering
\begin{tikzpicture}[
  font=\scriptsize,
  node distance=2.2cm,
  every node/.style={align=center},
  arrow/.style={->, thick},
  transform shape,
  label/.style={font=\tiny}
]

\node (mtd) {Manin triples of\\ averaging pre-Lie algebras};
\node (aplb) [below of=mtd] {Averaging\\ pre-Lie bialgebras};
\node (mp) [below of=aplb] {Matched pairs of\\ averaging pre-Lie algebras};

\node (avqu) [left=2.2cm of aplb] {Averaging operators on\\ quadratic Rota-Baxter\\ pre-Lie algebras};
\node (leib) [left=2.2cm of mp] {Matched pairs of\\ Leibniz algebras};

\node (avlb) [right=2.2cm of mtd] {Averaging\\ Lie bialgebras};
\node (cybe) [right=2.2cm of aplb] {Solutions of\\ $S$-admissible classical\\ Yang-Baxter equation\\ in averaging Lie algebras};
\node (rb) [right=2.2cm of mp] {Relative Rota-Baxter\\ operators on averaging\\ pre-Lie algebras};

\draw[arrow] (mtd) -- node[label, left=1pt] {Theorem~\ref{theorem:mtriandaplbi}} (aplb);
\draw[arrow] (aplb) -- (mtd);

\draw[arrow] (avqu) -- node[label, above=1pt] {Theorem~\ref{theorem:avequtobi}} (aplb);
\draw[arrow] (aplb) -- node[label, left=1pt] {Theorem~\ref{theorem:mpoplbi}} (mp);
\draw[arrow] (mp) -- (aplb);

\draw[arrow] (mp) -- node[label, above=1pt] {Theorem~\ref{theorem:mpoplinducempolb}} (leib);
\draw[arrow] (rb) -- node[label, above=1pt] {Proposition~\ref{prop:mmoaverprelie}} (mp);
\draw[arrow] (cybe) -- node[label, above=1pt] {Theorem~\ref{theorem:sapplbi}} (aplb);
\draw[arrow] (aplb) -- node[label, sloped, above=1pt] {Theorem~\ref{theorem:aplb-avlb}} (avlb);
\draw[arrow] (rb.north) -- node[label, left=1pt] {Proposition~\ref{prop:rRBclYB}} (rb.north |- cybe.south);

\end{tikzpicture}
\end{figure}

The paper is organized as follows. In Section \ref{sec:Averaging pre-Lie algebras}, we study the representations of averaging pre-Lie algebras and show that representations of averaging pre-Lie algebras give rise to representations of Leibniz algebras. In Section \ref{sec:averbiandaverop}, we introduce the notions of matched pairs of averaging pre-Lie algebras, Manin triples of averaging pre-Lie algebras and averaging pre-Lie bialgebras and establish the equivalence of the three under suitable conditions. Furthermore, we introduce averaging operators on quadratic Rota-Baxter pre-Lie algebras and demonstrate that they induce an averaging pre-Lie bialgebra. In Section \ref{sec:adalybinapl}, we introduce the notion of the admissable classical Yang-Baxter equation in an averaging pre-Lie algebra whose solution gives rise to an averaging pre-Lie bialgebra. Then we introduce the notion of relative Rota-Baxter operators on an averaging Lie algebra with respect to a representation, which can give rise to solutions of the classical Yang-Baxter equation in the semidirect product averaging pre-Lie algebra. In Section \ref{sec:balancedaverpl}, we introduce a new notion of averaging Lie bialgebras and show that, under suitable conditions, an averaging Lie bialgebra can be obtained from an averaging pre-Lie bialgebra. Notably, when the averaging operators associated with the pre-Lie algebra and pre-Lie coalgebra coincide, the definition of an averaging Lie bialgebra we introduce (Definition~\ref{def:averagingLiebialgebra}) aligns with the notion of averaging Lie bialgebras presented in ~\cite[Definition~4.4]{HSZh26}.

\smallskip
\textbf{Notation.} Throughout this paper, we fix a field $\bfk$ of characteristic $0$ and all the vector spaces and algebras are finite-dimensional. Linear maps and tensor products are taken over $\bfk.$ Unless stated otherwise, for any vector space $V$, $V^*$ denotes the dual space of $V$, and this notation is also adopted for linear maps. The angle bracket $\langle\cdot, \cdot\rangle$ stands for the canonical pairing between a space and its dual.

\section{Averaging pre-Lie algebras}\label{sec:Averaging pre-Lie algebras}

In this section, we introduce representations of averaging pre-Lie algebras and the concepts of matched pairs and Manin triples of averaging pre-Lie algebras. Moreover, we show that a quadratic averaging pre-Lie algebra gives rise to an isomorphism from the regular representation to the coregular representation.

\subsection{Averaging pre-Lie algebras and its representations}

In this subsection, we recall the definition of averaging pre-Lie algebras and their representations, and show that the dual representations exists.

\begin{defn}\cite{HSZh26}
Let $(A,\circ)$ be a pre-Lie algebra and $P : A \to A$ a linear map.
If
\begin{equation}
P(x) \circ P(y) = P\Big( P(x) \circ y \Big) = P\Big(x \circ P(y) \Big) ,
\qquad \forall x,y \in A, \mlabel{eq:averop}
\end{equation}
then we call $P$ an \textbf{averaging operator} on $(A,\circ)$ and the pair $\big( (A,\circ), P \big)$ an \textbf{averaging pre-Lie algebra}.
\end{defn}

\begin{defn}
Let $\big( (A,\circ), P \big)$ and $\big( (A',\circ '), P' \big)$ be two averaging pre-Lie algebras.
A linear map $f : A \to A'$ is called a \textbf{homomorphism from
$\big( (A,\circ), P \big)$ to $\big( (A',\circ'), P' \big)$} if
\[
f(x\circ y)=f(x)\circ' f(y),
\qquad \forall\,x,y\in A,
\]
and
\[
f(P(v)) = P'(f(v)),
\quad \forall\,  v \in A.
\]
\end{defn}

\begin{exam}
Let $(A,\circ)$ be a pre-Lie algebra.
\begin{enumerate}
\item $P = \mathrm{id}_A$ is an averaging operator on $(A,\circ)$;

\item If a linear map $P : A \to A$ commutes with the left and right multiplication,
i.e., for all $x \in A$,
\[
P L_x = L_x P \quad \text{and } \quad P R_x = R_x P,
\]
then $P$ is an averaging operator.
\end{enumerate}
\end{exam}

\begin{exam}
Let $\bfk$ be a field and let
\[
A=UT_2(\mathbf{k})
=\left\{\begin{pmatrix}\mathrm{a}&\mathrm{b}\\0&\mathrm{c}\end{pmatrix}\ \middle|\ \mathrm{a}, \mathrm{b}, \mathrm{c} \in \mathbf{k} \right\}
\]
be the associative algebra of $2\times 2$ upper triangular matrices over $\bfk$, with multiplication given by matrix multiplication.
Define a linear map
\[
\begin{array}{rcl}
R:\qquad A &\longrightarrow& A\\[2mm]
\begin{pmatrix}a&b\\0&c\end{pmatrix}
&\longmapsto&
\begin{pmatrix}a&0\\0&c\end{pmatrix}
\end{array}
\]
and a bilinear operation $\circ$ on $A$ by
\[
x\circ y:=R(x)y-yR(x)-xy,
\qquad \forall\,x,y\in A.
\]
Then by \cite[Lemma~8.6]{WBLS}, $(A,\circ)$ is a pre-Lie algebra. Obviously, $R$ is an averaging operator on $(A,\circ)$.
Thus the triple $(A,\circ,R)$ is an averaging pre-Lie algebra.
\end{exam}

The following proposition shows that an averaging pre-Lie algebra can induce a Leibniz algebra.
\begin{prop}\label{prop:ilbniz}
Let $\bigl((A,\circ),P\bigr)$ be an averaging pre-Lie algebra. Define a bilinear operation
\[
[x,y]_P \;:=\; P(x)\circ y \;-\; y\circ P(x),\qquad \forall\,x,y\in A .
\]
Then $(A,[\cdot ,\cdot ]_P)$ is a Leibniz algebra, i.e.
\[
[[x,y]_P,z]_P \;+\; [y,[x,z]_P]_P \;=\; [x,[y,z]_P]_P,\qquad \forall\,x,y,z\in A.
\]
The Leibniz algebra $(A,[\cdot ,\cdot ]_P)$ is called \textbf{the induced Leibniz algebra} of the averaging pre-Lie algebra, denoted by $A_P$.
\end{prop}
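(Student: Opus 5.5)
The plan is to reduce the Leibniz identity to statements about the commutator Lie bracket of the pre-Lie algebra and the averaging identity \eqref{eq:averop}. Write $[a,b]:=a\circ b-b\circ a$; this is the sub-adjacent Lie bracket of $(A,\circ)$. Then $[x,y]_P=[P(x),y]$. Set $\mathrm{ad}_a:=L_a-R_a$, so that $[x,y]_P=\mathrm{ad}_{P(x)}(y)$.

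The first step is to show that $P$ intertwines the brackets, that is,
\[
P([x,y]_P)=P\big(P(x)\circ y\big)-P\big(y\circ P(x)\big)=P(x)\circ P(y)-P(y)\circ P(x)=[P(x),P(y)].
\]
This uses both equalities in \eqref{eq:averop}: the first with $P(x)\circ y$, the second (with the roles of $x$ and $y$ swapped) with $y\circ P(x)$.

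Next, expand the three terms of the Leibniz identity:
\[
[[x,y]_P,z]_P=[P([x,y]_P),z]=[[P(x),P(y)],z],\qquad
[y,[x,z]_P]_P=[P(y),[P(x),z]],
\]
\[
[x,[y,z]_P]_P=[P(x),[P(y),z]].
\]
With $a=P(x)$ and $b=P(y)$, the required identity becomes
\[
[[a,b],z]+[b,[a,z]]=[a,[b,z]].
\]
This is exactly the Jacobi identity for $[\cdot,\cdot]$, rewritten as the statement that $\mathrm{ad}_a$ is a derivation.

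It remains to invoke the standard fact that the commutator of a pre-Lie algebra is a Lie bracket. If a self-contained argument is wanted, expand the Jacobiator of $[\cdot,\cdot]$ into twelve terms of the form $(u\circ v)\circ w$ and $u\circ(v\circ w)$. These group into three copies of the difference of the two sides of \eqref{eq:preLie}, one for each pair of variables, and so the Jacobiator vanishes.

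The only step needing care is the first one, the identity $P([x,y]_P)=[P(x),P(y)]$. It requires the two-sided averaging condition: the term $P(y\circ P(x))$ must be rewritten as $P(y)\circ P(x)$, which uses $P(u\circ P(v))=P(u)\circ P(v)$ rather than the other equality. Everything else is formal.
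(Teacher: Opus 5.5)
Your proof is correct and takes essentially the same route as the paper. The paper also uses both equalities in \eqref{eq:averop} to replace $P\bigl(P(x)\circ y\bigr)$ and $P\bigl(y\circ P(x)\bigr)$ by $P(x)\circ P(y)$ and $P(y)\circ P(x)$, and then finishes with the pre-Lie identity. The only difference is packaging: you name the leftover identity as the Jacobi identity of the commutator bracket with $a=P(x)$, $b=P(y)$, whereas the paper verifies it by explicit term-by-term expansion.
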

\begin{proof}
For all $x,y,z\in A$, we have
\begin{align*}
&\quad [[x,y]_P,z]_P + [y,[x,z]_P]_P\\
&= P([x,y]_P)\circ z - z\circ P([x,y]_P) + P(y)\circ[x,z]_P - [x,z]_P\circ P(y)\\
&= P\bigl(P(x)\circ y - y\circ P(x)\bigr)\circ z
   - z\circ P\bigl(P(x)\circ y - y\circ P(x)\bigr)\\
&\qquad + P(y)\circ\bigl(P(x)\circ z - z\circ P(x)\bigr)
   - \bigl(P(x)\circ z - z\circ P(x)\bigr)\circ P(y)\\
&= P(P(x)\circ y)\circ z - P(y\circ P(x))\circ z
   - z\circ P(P(x)\circ y) + z\circ P(y\circ P(x))\\
&\qquad + P(y)\circ(P(x)\circ z) - P(y)\circ(z\circ P(x))
   - (P(x)\circ z)\circ P(y) + (z\circ P(x))\circ P(y)\\
&\overset{\eqref{eq:averop}}{=} (P(x)\circ P(y))\circ z - (P(y)\circ P(x))\circ z
   - z\circ(P(x)\circ P(y)) + z\circ(P(y)\circ P(x))\\
&\qquad + P(y)\circ(P(x)\circ z) - P(y)\circ(z\circ P(x))
   - (P(x)\circ z)\circ P(y) + (z\circ P(x))\circ P(y)\\
&\overset{\eqref{eq:preLie}}{=} P(x)\circ(P(y)\circ z) + (P(y)\circ P(x))\circ z
   - P(y)\circ(P(x)\circ z) - (P(y)\circ P(x))\circ z\\
&\qquad - z\circ(P(x)\circ P(y)) + z\circ(P(y)\circ P(x))
   + P(y)\circ(P(x)\circ z) - P(y)\circ(z\circ P(x))\\
&\qquad - (P(x)\circ z)\circ P(y) + (z\circ P(x))\circ P(y)\\
&= P(x)\circ(P(y)\circ z)
   - P(y)\circ(z\circ P(x))
   - (P(x)\circ z)\circ P(y)
   + (z\circ P(x))\circ P(y)\\
&\qquad - z\circ(P(x)\circ P(y))
   + z\circ(P(y)\circ P(x))\\
&= P(x)\circ(P(y)\circ z - z\circ P(y))
   - (P(y)\circ z - z\circ P(y))\circ P(x)\\
&= P(x)\circ[y,z]_P - [y,z]_P\circ P(x)\\
&= [x,[y,z]_P]_P.
\end{align*}
This completes the proof.
\end{proof}

\begin{defn}\cite{Bai08}
A \textbf{representation of a pre-Lie algebra $(A,\circ)$} is a triple $(V,\rho,\varphi)$, where
\begin{enumerate}
\item $V$ is a vector space;

\item $\rho,\varphi : A \to \mathrm{End}(V)$ are linear maps;

\item for all $x, y \in A$ and $v \in V$, we have
\begin{align}
&\rho(x \circ y - y \circ x)v
=
\rho(x)(\rho(y)v) - \rho(y)(\rho(x)v), \mlabel{eq:ropl1}\\
&\varphi(x \circ y)v
=
\rho(x)(\varphi(y)v) - \varphi(y)(\rho(x)v) + \varphi(y)(\varphi(x)v).\mlabel{eq:ropl2}
\end{align}
\end{enumerate}
\end{defn}

Based on the above definition, we propose the notion of representations of averaging pre-Lie algebras.
\begin{defn}
Let $\big( (A,\circ), P \big)$ be an averaging pre-Lie algebra, a \textbf{representation of $\big( (A,\circ), P \big)$} is a pair
$\big( (V,\rho,\varphi), \alpha \big)$, where
\begin{enumerate}
\item $(V,\rho,\varphi)$ is a representation of $(A,\circ)$;

\item $\alpha : V \to V$ is a linear map;

\item for all $x \in A$ and $v \in V$, we have the following compatibility conditions:
\begin{align}
\rho(P(x))\alpha(v) &= \alpha\big(\rho(P(x))v\big) = \alpha\big(\rho(x)\alpha(v)\big),  \mlabel{eq:rapL1}\\
\varphi(P(x))\alpha(v) &= \alpha\big(\varphi(P(x))v\big) = \alpha\big(\varphi(x)\alpha(v)\big). \mlabel{eq:rapL2}
\end{align}
\end{enumerate}
\end{defn}

\begin{defn}
Let $\big( (V_1,\rho_1,\varphi_1), \alpha_1\big)$ and $\big( (V_2,\rho_2,\varphi_2), \alpha_2\big)$ be two representations of an averaging pre-Lie algebra $\big( (A,\circ), P \big)$.
A linear map $f : V_1 \to V_2$ is called a \textbf{homomorphism from
$\big( (V_1,\rho_1,\varphi_1), \alpha_1\big)$ to $\big( (V_2,\rho_2,\varphi_2), \alpha_2\big)$} if for any $x \in A,\ v \in V_1$, we have
\begin{equation}
\rho_2(x) f(v) = f(\rho_1(x)v),
\qquad
\varphi_2(x) f(v) = f(\varphi_1(x)v),
\qquad
f(\alpha_1(v)) = \alpha_2(f(v)).\mlabel{eq:homocondi}
\end{equation}
Furthermore, if $f$ is invertible, then $f$ is called an \textbf{isomorphism}.
\end{defn}

\begin{exam}
Let $\big((A,\circ), P \big)$ be an averaging pre-Lie algebra. Then $\big((A,L,R),P\big)$ is a representation of $\big((A,\circ),P\big)$, called the \textbf{regular representation} of $\big((A,\circ),P\big)$.
\end{exam}

The following proposition characterizes semi-direct product averaging pre-Lie algebras via representations.

\begin{prop} \label{prop:semi-repre}
Let $\big((A,\circ),P\big)$ be an averaging pre-Lie algebra, $(V,\rho,\varphi)$ be a representation of $(A,\circ)$ and $\alpha : V \to V$ a linear map.
Define a product $\circ_{\heartsuit}$ on $A \oplus V$ by
\begin{equation}
(x+u)\circ_{\heartsuit}(y+v)
:=
x \circ y + \rho(x)v + \varphi(y)u,
\qquad \forall x,y \in A,\ u,v \in V.  \mlabel{eq:sdp}
\end{equation}
and a linear map $P_{A\oplus V} : A \oplus V \to A \oplus V$ by
\begin{equation}
P_{A\oplus V}(x+v)
:=
(P+\alpha)(x+v)
=
P(x) + \alpha(v). \mlabel{eq:sda}
\end{equation}
Then $A \oplus V$ equipped with the product Eq.~\eqref{eq:sdp} and the linear map
Eq.~\eqref{eq:sda} is an averaging pre-Lie algebra, denoted by $\big(A \ltimes_{\rho,\varphi} V, P + \alpha\big)$, if and only if
$\big((V,\rho,\varphi),\alpha\big)$ is a representation of $\big((A,\circ),P\big)$.
In this case, we call $(A \ltimes_{\rho,\varphi} V, P + \alpha)$ a \textbf{semi-direct product of averaging pre-Lie algebra}.
\begin{proof}
First, by \cite[Proposition~3.1]{Bai08}, we know that $\big(A \oplus V, \circ_{\heartsuit}\big) = :A \ltimes_{\rho,\varphi} V$ is a pre-Lie algebra if and only if $(V, \rho, \varphi)$ is a representation of $(A,\circ)$.

Second, by a straightforward computation, we have
\begin{align*}
P_{A\oplus V}(x+u) \circ_{\heartsuit} P_{A\oplus V}(y+v)
&\overset{\eqref{eq:sda}}{=} (P(x) + \alpha(u)) \circ_{\heartsuit} (P(y) + \alpha(v))\\
&\overset{\eqref{eq:sdp}}{=} P(x) \circ P(y) + \rho(P(x))\alpha(v) + \varphi(P(y))\alpha(u),\\
P_{A\oplus V}\big(P_{A\oplus V}(x+u) \circ_{\heartsuit} (y+v) \big)
&\overset{\eqref{eq:sda}}{=} P_{A\oplus V}\big((P(x) + \alpha(u)) \circ_{\heartsuit} (y+v) \big)\\
&\overset{\eqref{eq:sdp}}{=} P_{A\oplus V}\big(P(x) \circ y + \rho(P(x))v + \varphi(y)\alpha(u)\big)\\
&\overset{\eqref{eq:sda}}{=} P\big(P(x) \circ y\big) + \alpha\big(\rho(P(x))v\big) + \alpha\big(\varphi(y)\alpha(u)\big),
\end{align*}
we know that $P_{A\oplus V}$ is an averaging operator on $A \ltimes_{\rho,\varphi} V$ if and only if Eq.~\eqref{eq:rapL1} and Eq.~\eqref{eq:rapL2} hold.
Thus, the proof is completed.
\end{proof}
\end{prop}

Next, we consider the dual representation of an averaging pre-Lie algebra.

\begin{prop}\label{prop:pplre}
Let $\big((A,\circ),P\big)$ be an averaging pre-Lie algebra, $(V,\rho,\varphi)$ a representation of $(A,\circ)$, and $\beta:V\to V$ a linear map. Define linear maps
$\rho^*,\varphi^*:A\to \mathrm{End}(V^*)$ by
\begin{equation}
\langle \rho^*(x)v^*,\, v\rangle=-\langle v^*,\, \rho(x)v\rangle, \qquad
\langle \varphi^*(x)v^*,\, v\rangle=-\langle v^*,\, \varphi(x)v\rangle,   \mlabel{eq:dual}
\end{equation}
for all $x\in A$, $v^*\in V^*$ and $v\in V$.
Then $\big( (V^*,\rho^*-\varphi^*,-\varphi^*), \beta^* \big)$ is a representation of $\big((A,\circ),P\big)$
if and only if for all $x\in A$ and $v\in V$, satisfying
\begin{equation}
\rho(P(x))\beta(v) = \beta\big(\rho(P(x))v\big) = \beta\big(\rho(x)\beta(v)\big), \mlabel{eq:beta1}
\end{equation}
\begin{equation}
\varphi(P(x))\beta(v) = \beta\big(\varphi(P(x))v\big) = \beta\big(\varphi(x)\beta(v)\big). \mlabel{eq:beta2}
\end{equation}
\end{prop}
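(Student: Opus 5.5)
The plan is to split the statement into two independent parts, following the structure of Proposition~\ref{prop:semi-repre}. The first part is the pre-Lie part: by \cite{Bai08}, whenever $(V,\rho,\varphi)$ is a representation of $(A,\circ)$, the triple $(V^*,\rho^*-\varphi^*,-\varphi^*)$ is again a representation of $(A,\circ)$. So condition (1) of the definition of a representation of an averaging pre-Lie algebra holds automatically. The content of the proposition is therefore that the compatibility conditions \eqref{eq:rapL1} and \eqref{eq:rapL2}, written for $\big((V^*,\rho^*-\varphi^*,-\varphi^*),\beta^*\big)$, are equivalent to \eqref{eq:beta1} and \eqref{eq:beta2}.

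The key step is a dualization computation. For $x\in A$, $v^*\in V^*$, $v\in V$, and any $\theta\in\{\rho,\varphi\}$, using \eqref{eq:dual} and $\langle\beta^*v^*,v\rangle=\langle v^*,\beta(v)\rangle$, I will compute
\begin{align*}
\langle \theta^*(P(x))\beta^*(v^*),v\rangle &= -\langle v^*,\beta(\theta(P(x))v)\rangle,\\
\langle \beta^*(\theta^*(P(x))v^*),v\rangle &= -\langle v^*,\theta(P(x))\beta(v)\rangle,\\
\langle \beta^*(\theta^*(x)\beta^*(v^*)),v\rangle &= -\langle v^*,\beta(\theta(x)\beta(v))\rangle.
\end{align*}
Since $V$ is finite-dimensional, the pairing is nondegenerate. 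Hence the chain of equalities $\theta^*(P(x))\beta^* = \beta^*\theta^*(P(x)) = \beta^*\theta^*(x)\beta^*$ holds if and only if
\[
\beta\,\theta(P(x)) = \theta(P(x))\,\beta = \beta\,\theta(x)\,\beta .
\]
The order of the first two terms is swapped, but this is the same set of three equal maps. Thus the $\theta$-version of the equality is exactly \eqref{eq:beta1} for $\theta=\rho$ and \eqref{eq:beta2} for $\theta=\varphi$.

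All the expressions involved are linear in $\theta$, so the argument finishes as follows. The condition \eqref{eq:rapL2} for the map $-\varphi^*$ is equivalent to \eqref{eq:beta2}. The condition \eqref{eq:rapL1} for the map $\rho^*-\varphi^*$ is equivalent to the analogous three-term identity for $\rho-\varphi$. Given \eqref{eq:beta2}, that identity for $\rho-\varphi$ holds if and only if \eqref{eq:beta1} holds: add or subtract the $\varphi$-identity. This yields both directions of the equivalence.

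I expect no real obstacle here; the only point needing care is bookkeeping. One must track that dualization reverses composition order, which is what turns $\beta^*\theta^*$ into $\theta\beta$. One must also check that the three-term chain is preserved under this permutation of its terms, rather than becoming a different pair of equalities.
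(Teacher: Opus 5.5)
Your proposal is correct and follows essentially the same route as the paper. You cite \cite{Bai08} for the pre-Lie part, dualize the three-term compatibility chains to get \eqref{eq:beta2} from the $-\varphi^*$ condition, and recover \eqref{eq:beta1} from the $\rho^*-\varphi^*$ condition by subtracting the $\varphi$-identity. Your uniform treatment of $\theta\in\{\rho,\varphi\}$, with the remark that the transposed chain is a permutation of the same three maps, is a cleaner bookkeeping of what the paper writes out for $\varphi$ and then calls ``similar'' for $\rho-\varphi$.
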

\begin{proof}
First, by \cite[Proposition~3.3]{Bai08}, we know that $(V^*,\rho^*-\varphi^*,-\varphi^*)$ is a representation of $(A, \circ)$.

Second, by a straightforward computation, for $\forall x \in A, v \in V$ and $v^* \in V^*$,
\begin{align*}
\langle -\varphi^*(P(x))\beta^*(v^*),\, v\rangle
\overset{\eqref{eq:dual}}{=} \langle \beta^*(v^*),\, \varphi(P(x))v\rangle
= \langle v^*,\, \beta\big(\varphi(P(x))v\big)\rangle,
\end{align*}
\begin{align*}
\langle \beta^*\bigl(-\varphi^*(x)\beta^*(v^*)\bigr),\, v\rangle
= \langle -\varphi^*(x)\beta^*(v^*),\, \beta(v)\rangle
\overset{\eqref{eq:dual}}{=} \langle \beta^*(v^*),\, \varphi(x)\beta(v)\rangle
= \langle v^*,\, \beta\bigl(\varphi(x)\beta(v)\bigr)\rangle,
\end{align*}
\begin{align*}
\langle \beta^*\bigl(-\varphi^*(P(x))v^*\bigr),\, v\rangle
= \langle -\varphi^*(P(x))v^*,\, \beta(v)\rangle
\overset{\eqref{eq:dual}}{=}\langle v^*,\, \varphi(P(x))\beta(v)\rangle,
\end{align*}
we know that Eq.~\eqref{eq:rapL2} holds for $-\varphi^*$ if and only if Eq.~\eqref{eq:beta2} holds. Similarly, Eq.~\eqref{eq:rapL1} holds for $\rho^*-\varphi^*$ if and only if for all $x\in A$ and $v \in V$, satisfying
\begin{align*}
\beta\bigl(\rho(P(x))v\bigr) - \beta\bigl(\varphi(P(x))v\bigr)
= \rho(P(x))\beta(v) - \varphi(P(x))\beta(v)
= \beta\bigl(\rho(x)\beta(v)\bigr) - \beta\bigl(\varphi(x)\beta(v)\bigr),
\end{align*}
which is equivalent to Eq.~\eqref{eq:beta1} by Eq.~\eqref{eq:beta2}.
This completes the proof.
\end{proof}

Now, we apply the above result to the regular representation.

\begin{coro}\label{coro:admiss}
Let $\big((A,\circ),P\big)$ be an averaging pre-Lie algebra. For a linear map $S:A\to A$, the pair $\big((A^*,L^*-R^*,-R^*),S^*\big)$ is a representation of $\big((A,\circ),P\big)$ if and only if for all $x,y\in A$, satisfying
\begin{equation}
P(x)\circ S(y) = S\big(P(x)\circ y\big) = S\big(x\circ S(y)\big),  \mlabel{eq:regu1}
\end{equation}
\begin{equation}
S(x)\circ P(y) = S\big(x\circ P(y)\big) = S\big(S(x)\circ y\big).  \mlabel{eq:regu2}
\end{equation}
\end{coro}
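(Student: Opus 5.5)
This is a direct specialization of Proposition~\ref{prop:pplre}. We take $V=A$ with the regular representation $(A,L,R)$ of $(A,\circ)$, so that $\rho=L$ and $\varphi=R$, and we set $\beta=S$. Under this choice the dual maps $\rho^*,\varphi^*$ defined in Eq.~\eqref{eq:dual} are exactly $L^*$ and $R^*$. Hence the triple $(V^*,\rho^*-\varphi^*,-\varphi^*)$ becomes $(A^*,L^*-R^*,-R^*)$, and $\beta^*=S^*$. Proposition~\ref{prop:pplre} then says that $\big((A^*,L^*-R^*,-R^*),S^*\big)$ is a representation of $\big((A,\circ),P\big)$ if and only if Eqs.~\eqref{eq:beta1} and \eqref{eq:beta2} hold with $\rho=L$, $\varphi=R$, $\beta=S$.

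It remains to rewrite these two conditions as Eqs.~\eqref{eq:regu1} and \eqref{eq:regu2}. We use $L_a b=a\circ b$ and $R_a b=b\circ a$.

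For Eq.~\eqref{eq:beta1}, taking $v=y$ gives
\[
L_{P(x)}S(y)=S\big(L_{P(x)}y\big)=S\big(L_xS(y)\big),
\]
that is, $P(x)\circ S(y)=S(P(x)\circ y)=S(x\circ S(y))$. This is Eq.~\eqref{eq:regu1}.

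For Eq.~\eqref{eq:beta2}, taking $v=y$ gives
\[
R_{P(x)}S(y)=S\big(R_{P(x)}y\big)=S\big(R_xS(y)\big),
\]
that is, $S(y)\circ P(x)=S(y\circ P(x))=S(S(y)\circ x)$. After swapping the names of $x$ and $y$, this is exactly Eq.~\eqref{eq:regu2}.

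No step presents a real obstacle. The only points needing care are the identification of $\rho^*,\varphi^*$ with $L^*,R^*$, and the correct unpacking of the right multiplication $R$ in Eq.~\eqref{eq:beta2}, including the renaming of variables.
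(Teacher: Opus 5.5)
Your proposal is correct and follows the same route as the paper, which obtains this corollary by applying Proposition~\ref{prop:pplre} to the regular representation $(A,L,R)$ with $\beta=S$. Your unpacking of Eqs.~\eqref{eq:beta1} and \eqref{eq:beta2} into Eqs.~\eqref{eq:regu1} and \eqref{eq:regu2} is accurate, including the swap of $x$ and $y$ needed for the right-multiplication condition.
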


\begin{coro}
Let $\big((A,\circ), P \big)$ be an averaging pre-Lie algebra. Then $\big((A^*,L^*-R^*,-R^*),P^*\big)$ is a representation of $\big((A,\circ),P\big)$, called the \textbf{coregular representation} of $\big((A,\circ),P\big)$.
\end{coro}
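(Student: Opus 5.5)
This is the special case $S=P$ of Corollary~\ref{coro:admiss}. That corollary was obtained from Proposition~\ref{prop:pplre} applied to the regular representation $(A,L,R)$ with $\beta=S$. So it suffices to check that Eqs.~\eqref{eq:regu1} and \eqref{eq:regu2} hold when $S$ is replaced by $P$.

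With $S=P$, Eq.~\eqref{eq:regu1} becomes
\[
P(x)\circ P(y)=P\big(P(x)\circ y\big)=P\big(x\circ P(y)\big),
\]
which is exactly the averaging identity Eq.~\eqref{eq:averop}.

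Likewise, Eq.~\eqref{eq:regu2} becomes
\[
P(x)\circ P(y)=P\big(x\circ P(y)\big)=P\big(P(x)\circ y\big).
\]
This consists of the same three expressions, merely listed in a different order, so it again coincides with Eq.~\eqref{eq:averop}.

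Both conditions of Corollary~\ref{coro:admiss} therefore hold for $S=P$. Hence $\big((A^*,L^*-R^*,-R^*),P^*\big)$ is a representation of $\big((A,\circ),P\big)$.

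The only point needing care is to confirm how the conditions of Corollary~\ref{coro:admiss} arise from Eqs.~\eqref{eq:beta1}--\eqref{eq:beta2}. Substituting $\rho=L$, $\varphi=R$, $\beta=S$ gives $\rho(P(x))S(y)=P(x)\circ S(y)$ and $\varphi(P(x))S(y)=S(y)\circ P(x)$. After renaming the variables, these are exactly the orderings appearing in Eqs.~\eqref{eq:regu1} and \eqref{eq:regu2}. No other obstacle is expected.
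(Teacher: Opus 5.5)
Your proposal is correct and is exactly the paper's proof: specialize Corollary~\ref{coro:admiss} to $S=P$. Your check that Eqs.~\eqref{eq:regu1} and \eqref{eq:regu2} then both reduce to the averaging identity \eqref{eq:averop} simply spells out the verification the paper leaves implicit.
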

\begin{proof}
This follows directly by taking $S = P$ in Corollary ~\ref{coro:admiss}.
\end{proof}

\begin{prop}
Let $\big((A,\circ),P\big)$ be an averaging pre-Lie algebra and $\big( (V,\rho, \varphi), \beta \big)$ a representation of $\big((A,\circ),P\big)$. Then the following conditions are equivalent:
\begin{enumerate}
\item $\big( (V^*,\rho^*, \varphi^*), \beta^* \big)$ is a representation of $\big((A,\circ),P\big)$;
\item $\big( (V,\rho-\varphi,-\varphi), \beta \big)$ is a representation of $\big((A,\circ),P\big)$;
\item $\varphi(x)\varphi(y)= - \varphi(y)\varphi(x)$ for any $x, y \in A$.
\end{enumerate}
\end{prop}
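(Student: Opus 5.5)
The plan is to separate the pre-Lie part of the statement from the averaging part. This is the same strategy the paper uses in Proposition~\ref{prop:semi-repre} and Proposition~\ref{prop:pplre}. I will handle the pre-Lie part with Bai's known result and check the averaging part by pairing arguments.

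\textbf{Step 1: the pre-Lie level.} This is Bai's result (cf.\ \cite[Proposition~3.3]{Bai08} and its companion statement). For a representation $(V,\rho,\varphi)$ of $(A,\circ)$ it gives
\[
(V^*,\rho^*,\varphi^*) \text{ rep.} \iff (V,\rho-\varphi,-\varphi) \text{ rep.} \iff \varphi(x)\varphi(y)=-\varphi(y)\varphi(x).
\]
If I have to verify it directly, the check is short.
\begin{itemize}
\item Condition \eqref{eq:ropl1} for $\rho^*$ is dual to that for $\rho$, so it holds automatically.
\item Dualizing \eqref{eq:ropl2} for $(\rho^*,\varphi^*)$ gives $\varphi(x\circ y)=\varphi(y)\rho(x)-\rho(x)\varphi(y)-\varphi(x)\varphi(y)$.
\item Comparing with \eqref{eq:ropl2} for $(\rho,\varphi)$ reduces this to $\varphi(y)\varphi(x)+\varphi(x)\varphi(y)=0$.
\item For $(\rho-\varphi,-\varphi)$, condition \eqref{eq:ropl2} becomes $-\varphi(x\circ y)=-\rho(x)\varphi(y)+\varphi(x)\varphi(y)+\varphi(y)\rho(x)-\varphi(y)\varphi(x)+\varphi(y)\varphi(x)$. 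Comparing with the original again reduces to anticommutativity.
\item Condition \eqref{eq:ropl1} for $\rho-\varphi$ needs more care. I would expand it using \eqref{eq:ropl2} for $\varphi(x\circ y)-\varphi(y\circ x)$ and check that the residual terms are again of the form $\varphi(x)\varphi(y)+\varphi(y)\varphi(x)$.
\end{itemize}
I expect this bookkeeping to be the main obstacle. I would rely on Bai's proof rather than redo it.

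\textbf{Step 2: the averaging compatibilities, (1)$\iff$(3).} I claim these hold automatically under the hypothesis that $((V,\rho,\varphi),\beta)$ is a representation. For (1), pair with $v$:
\[
\langle \rho^*(P(x))\beta^*(v^*),v\rangle=-\langle v^*,\beta(\rho(P(x))v)\rangle,
\]
\[
\langle \beta^*(\rho^*(P(x))v^*),v\rangle=-\langle v^*,\rho(P(x))\beta(v)\rangle,
\]
\[
\langle \beta^*(\rho^*(x)\beta^*(v^*)),v\rangle=-\langle v^*,\beta(\rho(x)\beta(v))\rangle.
\]
The three right-hand sides agree by \eqref{eq:rapL1} for $\beta$. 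The same computation with \eqref{eq:rapL2} handles $\varphi^*$. Hence (1) holds iff $(V^*,\rho^*,\varphi^*)$ is a pre-Lie representation, which by Step 1 is iff (3).

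\textbf{Step 3: (2)$\iff$(3).} The compatibilities \eqref{eq:rapL1} and \eqref{eq:rapL2} for $(\rho-\varphi,-\varphi,\beta)$ are linear combinations of those for $(\rho,\varphi,\beta)$, so they hold automatically. Thus (2) holds iff $(V,\rho-\varphi,-\varphi)$ is a pre-Lie representation, which by Step 1 is iff (3).

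Steps 2 and 3 together give the equivalence of (1), (2) and (3).
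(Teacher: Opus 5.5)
Your proposal is correct and follows the paper's proof: cite Bai for the pre-Lie-level equivalences, then check by pairing that the compatibilities \eqref{eq:rapL1}--\eqref{eq:rapL2} for $(\rho^*,\varphi^*,\beta^*)$ are equivalent to those for $(\rho,\varphi,\beta)$, and get them for $(\rho-\varphi,-\varphi,\beta)$ by linearity. That last step is slightly more direct than the paper's appeal to Proposition~\ref{prop:pplre}. Two small points in your optional direct check: the dualized identity should read $\varphi(x\circ y)=\rho(x)\varphi(y)-\varphi(y)\rho(x)-\varphi(x)\varphi(y)$, since you flipped the signs of the first two terms, and only with this form does the comparison with \eqref{eq:ropl2} reduce to anticommutativity; also, \eqref{eq:ropl1} for $\rho-\varphi$ holds identically with no residual, so it is not an obstacle at all.
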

\begin{proof}
First, by \cite[Proposition~3.4]{Bai08}, we know that the following conditions are equivalent:
\begin{enumerate}
\item $ (V^*,\rho^*, \varphi^*)$ is a representation of $(A,\circ)$;
\item $(V,\rho-\varphi,-\varphi)$ is a representation of $(A,\circ)$;
\item $\varphi(x)\varphi(y)= - \varphi(y)\varphi(x)$ for any $x, y \in A$.
\end{enumerate}

Second, by some simple calculations, we know that Eq.~\eqref{eq:rapL1} holds for $\rho^*$ and $\beta^*$ if and only if Eq.~\eqref{eq:rapL1} holds for $\rho$ and $\beta$; Eq.~\eqref{eq:rapL2} holds for $\varphi^*$ and $\beta^*$ if and only if Eq.~\eqref{eq:rapL2} holds for $\varphi$ and $\beta$.

Finally, by Proposition~\ref{prop:pplre}, we obtain $(V,\rho-\varphi,-\varphi)$ is a representation of $(A,\circ)$.
\end{proof}

\begin{defn}
Let $((A,\circ),P)$ be an averaging pre-Lie algebra, $(V,\rho,\varphi)$ a representation of $(A,\circ)$, and $\beta:V\to V$ a linear map. The averaging pre-Lie algebra $\big((A,\circ),P\big)$ is called
\textbf{$\beta$-admissible with respect to $(V,\rho,\varphi)$} if Eq.~\eqref{eq:beta1} and Eq.~\eqref{eq:beta2} hold.
In particular, when Eq.~\eqref{eq:regu1} and Eq.~\eqref{eq:regu2} hold, we say that $\big((A,\circ),P\big)$ is \textbf{$S$-admissible}.
\end{defn}

In the rest of this section, we show that a representation of an averaging pre-Lie algebra induces a representation of the underlying Leibniz algebra.

\begin{defn}\cite{HSZh26}
A \textbf{representation of a Leibniz algebra} $(\mathcal{G},[\cdot,\cdot]_{\mathcal{G}})$ is a triple $(W;\rho^L,\rho^R)$, where
\begin{enumerate}
\item $W$ is a vector space;
\item $\rho^L,\rho^R:\mathcal{G} \to \mathfrak{gl}(W)$ are linear maps;
\item the following equalities hold for all $x,y\in\mathcal{G}$,
\begin{align}
\rho^L\bigl([x,y]_{\mathcal{G}}\bigr) &= \rho^L(x)\rho^L(y)-\rho^L(y)\rho^L(x), \mlabel{eq:rola1}\\
\rho^R\bigl([x,y]_{\mathcal{G}}\bigr) &= \rho^L(x)\rho^R(y)-\rho^R(y)\rho^L(x), \mlabel{eq:rola2}\\
\rho^R(y)\circ\rho^L(x) &= -\,\rho^R(y)\circ\rho^R(x). \mlabel{eq:rola3}
\end{align}
\end{enumerate}
\end{defn}

\begin{prop}
Let $\bigl((V,\rho,\varphi),\alpha\bigr)$ be a representation of an averaging pre-Lie algebra $\bigl((A,\circ),P\bigr)$. Define linear maps $\rho^L,\rho^R:A\to \mathfrak{gl}(V)$ by
\begin{align*}
\rho^L(x)\;:=\;\rho(P(x))-\varphi(P(x)),\qquad
\rho^R(x)(\xi)\;:=\;-\rho(x)\bigl(\alpha(\xi)\bigr)+\varphi(x)\bigl(\alpha(\xi)\bigr),
\ \ \forall\,x\in A,\ \xi\in V.
\end{align*}
Then $\bigl(V;\rho^L,\rho^R\bigr)$ is a representation of the induced Leibniz algebra $A_P$.
\end{prop}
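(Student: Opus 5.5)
The plan is to verify the three defining identities \eqref{eq:rola1}--\eqref{eq:rola3} directly, after repackaging the maps. Set $\theta:=\rho-\varphi:A\to\mathfrak{gl}(V)$. Then $\rho^L(x)=\theta(P(x))$ and $\rho^R(x)=-\theta(x)\alpha$. The first step is the standard fact that $\theta$ is a representation of the sub-adjacent Lie algebra of $(A,\circ)$:
\begin{equation*}
\theta(x\circ y-y\circ x)=\theta(x)\theta(y)-\theta(y)\theta(x).
\end{equation*}
This follows from \eqref{eq:ropl1} together with \eqref{eq:ropl2}, applied once with $(x,y)$ and once with $(y,x)$ and then subtracted; the $\varphi\varphi$ terms and the mixed terms cancel. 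It is also implicit in \cite{Bai08}.

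The second step extracts two consequences of \eqref{eq:rapL1}--\eqref{eq:rapL2}, obtained by subtracting the $\varphi$-version from the $\rho$-version:
\begin{itemize}
\item (a) $\alpha\,\theta(P(x))=\theta(P(x))\,\alpha$;
\item (b) $\alpha\,\theta(P(x))=\alpha\,\theta(x)\,\alpha$.
\end{itemize}
We also record that \eqref{eq:averop} gives $P([x,y]_P)=P(x)\circ P(y)-P(y)\circ P(x)$.

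Next come the three identities.
\begin{itemize}
\item For \eqref{eq:rola1}: $\rho^L([x,y]_P)=\theta\bigl(P(x)\circ P(y)-P(y)\circ P(x)\bigr)$, which equals $[\theta(P(x)),\theta(P(y))]$ by the Lie-representation property.
\item For \eqref{eq:rola2}: the left side is $-\bigl(\theta(P(x))\theta(y)-\theta(y)\theta(P(x))\bigr)\alpha$, again using the Lie-representation property on $P(x)\circ y-y\circ P(x)$. The right side is $-\theta(P(x))\theta(y)\alpha+\theta(y)\alpha\theta(P(x))$. These agree once we use (a) to move $\alpha$ past $\theta(P(x))$.
\item For \eqref{eq:rola3}: we need $-\theta(y)\alpha\theta(P(x))=-\theta(y)\alpha\theta(x)\alpha$, which is exactly (b) multiplied on the left by $\theta(y)$.
\end{itemize}

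I expect the only real obstacle to be the bookkeeping in the first step, namely checking that $\rho-\varphi$ intertwines the commutator bracket. The sign conventions in the definition of $\rho^R$ and in \eqref{eq:rola3}, read as composition of operators, must also be matched carefully; everything else is one-line substitution.
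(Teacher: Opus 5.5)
Your proof is correct and follows essentially the paper's argument: both verify \eqref{eq:rola1}--\eqref{eq:rola3} directly. The paper expands everything in $\rho$ and $\varphi$ separately, thereby re-deriving inline both the sub-adjacent Lie representation property of $\theta=\rho-\varphi$ and the relations $\theta(P(x))\alpha=\alpha\,\theta(P(x))=\alpha\,\theta(x)\alpha$, which you instead isolate up front as lemmas; this makes your cancellations shorter and more transparent but is not a different route.
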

\begin{proof}
For any $x, y\in A$, $\xi \in V$, we need to prove Eqs.~\eqref{eq:rola1}-~\eqref{eq:rola3}.

First, we prove Eq.~\eqref{eq:rola1}.
\begin{align*}
\rho^L([x,y]_P)(\xi)
&= \rho^L(P(x)\circ y - y\circ P(x))(\xi)\\
&= \rho\bigl(P(P(x)\circ y - y\circ P(x))\bigr)(\xi)
   - \varphi\bigl(P(P(x)\circ y - y\circ P(x))\bigr)(\xi)\\
&\overset{\eqref{eq:averop}}{=} \rho\bigl(P(x)\circ P(y) - P(y)\circ P(x)\bigr)(\xi)
   - \varphi\bigl(P(x)\circ P(y)\bigr)(\xi)
   + \varphi\bigl(P(y)\circ P(x)\bigr)(\xi)\\
&\overset{\eqref{eq:ropl1} \eqref{eq:ropl2}}{=} \rho(P(x))\bigl(\rho(P(y))(\xi)\bigr)
   - \rho(P(y))\bigl(\rho(P(x))(\xi)\bigr)\\
&\qquad
   - \rho(P(x))\bigl(\varphi(P(y))(\xi)\bigr)
   + \varphi(P(y))\bigl(\rho(P(x))(\xi)\bigr)
   - \varphi(P(y))\bigl(\varphi(P(x))(\xi)\bigr)\\
&\qquad
   + \rho(P(y))\bigl(\varphi(P(x))(\xi)\bigr)
   - \varphi(P(x))\bigl(\rho(P(y))(\xi)\bigr)
   + \varphi(P(x))\bigl(\varphi(P(y))(\xi)\bigr)\\
&= \rho(P(x))\bigl(\rho(P(y))(\xi) - \varphi(P(y))(\xi)\bigr)
   - \rho(P(y))\bigl(\rho(P(x))(\xi) - \varphi(P(x))(\xi)\bigr)\\
&\qquad
   - \varphi(P(x))\bigl(\rho(P(y))(\xi) - \varphi(P(y))(\xi)\bigr)
   + \varphi(P(y))\bigl(\rho(P(x))(\xi) - \varphi(P(x))(\xi)\bigr)\\
&= \bigl(\rho(P(x))-\varphi(P(x))\bigr)
   \bigl(\rho(P(y))-\varphi(P(y))\bigr)(\xi)\\
&\qquad
   - \bigl(\rho(P(y))-\varphi(P(y))\bigr)
     \bigl(\rho(P(x))-\varphi(P(x))\bigr)(\xi)\\
&= \bigl(\rho^L(x)\rho^L(y)-\rho^L(y)\rho^L(x)\bigr)(\xi).
\end{align*}
Next, we prove Eq.~\eqref{eq:rola2}.
\begin{align*}
\rho^R([x,y]_P)(\xi)
&= \rho^R(P(x)\circ y - y\circ P(x))(\xi)\\
&= -\,\rho(P(x)\circ y - y\circ P(x))\,\alpha(\xi)
   + \varphi(P(x)\circ y)\,\alpha(\xi)
   - \varphi(y\circ P(x))\,\alpha(\xi)\\
&\overset{\eqref{eq:ropl1} \eqref{eq:ropl2}}{=} -\,\rho(P(x))\bigl(\rho(y)\alpha(\xi)\bigr)
   + \rho(y)\bigl(\rho(P(x))\alpha(\xi)\bigr)\\
&\qquad
   + \rho(P(x))\bigl(\varphi(y)\alpha(\xi)\bigr)
   - \varphi(y)\bigl(\rho(P(x))\alpha(\xi)\bigr)
   + \varphi(y)\bigl(\varphi(P(x))\alpha(\xi)\bigr)\\
&\qquad
   - \rho(y)\bigl(\varphi(P(x))\alpha(\xi)\bigr)
   + \varphi(P(x))\bigl(\rho(y)\alpha(\xi)\bigr)
   - \varphi(P(x))\bigl(\varphi(y)\alpha(\xi)\bigr)\\
&\overset{\eqref{eq:rapL1} \eqref{eq:rapL2}}{=} -\,\rho(P(x))\bigl(\rho(y)\alpha(\xi)\bigr)
   + \varphi(P(x))\bigl(\rho(y)\alpha(\xi)\bigr)
   + \rho(P(x))\bigl(\varphi(y)\alpha(\xi)\bigr)\\
&\qquad
   - \varphi(P(x))\bigl(\varphi(y)\alpha(\xi)\bigr)
   + \rho(y)\alpha\bigl(\rho(P(x))(\xi)\bigr)
   - \varphi(y)\alpha\bigl(\rho(P(x))(\xi)\bigr)\\
&\qquad
   - \rho(y)\alpha\bigl(\varphi(P(x))(\xi)\bigr)
   + \varphi(y)\alpha\bigl(\varphi(P(x))(\xi)\bigr)\\
&= \rho^L(x)\bigl(-\rho(y)\alpha(\xi)+\varphi(y)\alpha(\xi)\bigr)
   - \rho^R(y)\bigl(\rho(P(x))(\xi)-\varphi(P(x))(\xi)\bigr)\\
&= \bigl(\rho^L(x)\rho^R(y)-\rho^R(y)\rho^L(x)\bigr)(\xi),
\end{align*}
Finally, we prove Eq.~\eqref{eq:rola3}.
\begin{align*}
\bigl(\rho^R(y)\rho^L(x)+\rho^R(y)\rho^R(x)\bigr)(\xi)
&= \rho^R(y)\bigl(\rho(P(x))\xi-\varphi(P(x))\xi\bigr)
   + \rho^R(y)\bigl(-\rho(x)\alpha(\xi)+\varphi(x)\alpha(\xi)\bigr)\\
&= -\,\rho(y)\alpha\bigl(\rho(P(x))\xi\bigr)
   + \varphi(y)\alpha\bigl(\rho(P(x))\xi\bigr)
   + \rho(y)\alpha\bigl(\varphi(P(x))\xi\bigr)\\
&\qquad
   - \varphi(y)\alpha\bigl(\varphi(P(x))\xi\bigr)
   + \rho(y)\alpha\bigl(\rho(x)\alpha(\xi)\bigr)
   - \varphi(y)\alpha\bigl(\rho(x)\alpha(\xi)\bigr)\\
&\qquad
   - \rho(y)\alpha\bigl(\varphi(x)\alpha(\xi)\bigr)
   + \varphi(y)\alpha\bigl(\varphi(x)\alpha(\xi)\bigr)\\
&= 0.
\end{align*}
This completes the proof.
\end{proof}

\subsection{Matched pair of averaging pre-Lie algebras}
In this subsection, we introduce the notion of matched pairs of averaging pre-Lie algebras and show that such a matched pair gives rise to an induced matched pairs of Leibniz algebra.

\begin{defn}\cite{Bai08}
A matched pair of the pre-Lie algebras is a sextuple $\big((A,\circ_A),(\mathfrak{b},\circ_{\mathfrak{b}}),\rho_A,\varphi_A,\rho_{\mathfrak{b}},\varphi_{\mathfrak{b}}\big)$, where
\begin{enumerate}
\item $(A,\circ_A)$ and $(\mathfrak{b},\circ_{\mathfrak{b}})$ are pre-Lie algebras;
\item $(\mathfrak{b},\rho_A,\varphi_A)$ is a representation of $(A,\circ_A)$;
\item $(A,\rho_{\mathfrak{b}},\varphi_{\mathfrak{b}})$ is a representation of $(\mathfrak{b},\circ_{\mathfrak{b}})$;
\item for all $x,y \in A$ and $a,b \in \mathfrak b$, the following equations hold:
\begin{align}
&\rho_A(x)\bigl(a \circ_{\mathfrak b} b\bigr)
= -\rho_A\bigl(\rho_{\mathfrak b}(a)x-\varphi_{\mathfrak b}(a)x\bigr)b
 +\bigl(\rho_A(x)a-\varphi_A(x)a\bigr)\circ_{\mathfrak b} b
 +\varphi_A\bigl(\varphi_{\mathfrak b}(b)x\bigr)a
 +a\circ_{\mathfrak b}\bigl(\rho_A(x)b\bigr), \mlabel{eq:mppl1} \\
&\varphi_A(x)\bigl(a \circ_{\mathfrak b} b-b \circ_{\mathfrak b} a\bigr)
= \varphi_A\bigl(\rho_{\mathfrak b}(b)x\bigr)a
 -\varphi_A\bigl(\rho_{\mathfrak b}(a)x\bigr)b
 +a\circ_{\mathfrak b}\bigl(\varphi_A(x)b\bigr)
 -b\circ_{\mathfrak b}\bigl(\varphi_A(x)a\bigr), \mlabel{eq:mppl2}\\
&\rho_{\mathfrak b}(a)\bigl(x \circ_A y\bigr)
= -\rho_{\mathfrak b}\bigl(\rho_A(x)a-\varphi_A(x)a\bigr)y
 +\bigl(\rho_{\mathfrak b}(a)x-\varphi_{\mathfrak b}(a)x\bigr)\circ_A y
 +\varphi_{\mathfrak b}\bigl(\varphi_A(y)a\bigr)x
 +x\circ_A\bigl(\rho_{\mathfrak b}(a)y\bigr), \mlabel{eq:mppl3}\\
&\varphi_{\mathfrak b}(a)\bigl(x \circ_A y-y \circ_A x\bigr)
= \varphi_{\mathfrak b}\bigl(\rho_A(y)a\bigr)x
 -\varphi_{\mathfrak b}\bigl(\rho_A(x)a\bigr)y
 +x\circ_A\bigl(\varphi_{\mathfrak b}(a)y\bigr)
 -y\circ_A\bigl(\varphi_{\mathfrak b}(a)x\bigr). \mlabel{eq:mppl4}
\end{align}
\end{enumerate}
\end{defn}

Now, we give the definition of a matched pair of averaging pre-Lie algebras.

\begin{defn}\label{def:mpoaverprelie}
A \textbf{matched pair of averaging pre-Lie algebras} is a sextuple
\[\Bigl(\bigl((A,\circ_A),P_A\bigr),\ \bigl((\mathfrak{b},\circ_{\mathfrak{b}}),P_{\mathfrak{b}}\bigr),\
\rho_A,\varphi_A,\rho_{\mathfrak{b}},\varphi_{\mathfrak{b}}\Bigr),\]
where
\begin{enumerate}
\item $\big((A,\circ_A),P_A\big)$ and $\big((\mathfrak{b},\circ_{\mathfrak{b}}),P_{\mathfrak{b}}\big)$ are averaging pre-Lie algebras;
\item $(\big(\mathfrak{b},\rho_A,\varphi_A),P_{\mathfrak{b}}\big)$ is a representation of $\big((A,\circ_A),P_A\big)$;
\item $\big((A,\rho_{\mathfrak{b}},\varphi_{\mathfrak{b}}),P_A\big)$ is a representation of $\big((\mathfrak{b},\circ_{\mathfrak{b}}),P_{\mathfrak{b}}\big)$;
\item $\big((A,\circ_A),(\mathfrak{b},\circ_{\mathfrak{b}}),\rho_A,\varphi_A,\rho_{\mathfrak{b}},\varphi_{\mathfrak{b}}\big)$
      is a matched pair of the pre-Lie algebras $(A,\circ_A)$ and $(\mathfrak{b},\circ_{\mathfrak{b}})$.
\end{enumerate}
\end{defn}

Given a matched pair of averaging pre-Lie algebras, their double admits an averaging pre-Lie algebra structure.

\begin{prop}\label{prop:mpopl}
Let $\bigl((A,\circ_A),P_A\bigr)$ and $\bigl((\mathfrak b,\circ_{\mathfrak b}),P_{\mathfrak b}\bigr)$ be two averaging pre-Lie algebras.
Assume that $\bigl((\mathfrak b,\rho_A,\varphi_A),P_{\mathfrak b}\bigr)$ is a representation of $\bigl((A,\circ_A),P_A\bigr)$ and $\bigl((A,\rho_{\mathfrak b},\varphi_{\mathfrak b}),P_A\bigr)$ is a representation of $\bigl((\mathfrak b,\circ_{\mathfrak b}),P_{\mathfrak b}\bigr)$.
Define a product $\circ_\star$ on $A\oplus \mathfrak b$ by
\begin{equation}
(x+a)\circ_\star (y+b)
= x\circ_A y + \rho_{\mathfrak b}(a)y + \varphi_{\mathfrak b}(b)x
+ a\circ_{\mathfrak b} b + \rho_A(x)b + \varphi_A(y)a, \mlabel{eq:zh1}
\end{equation}
and a linear map $P_\star$ on $A\oplus \mathfrak b$ by
\begin{equation}
P_\star(x+a):=P_A(x)+P_{\mathfrak b}(a), \qquad x,y\in A,\ a,b\in\mathfrak b. \mlabel{eq:zh2}
\end{equation}
Then $\bigl((A\oplus \mathfrak b,\circ_\star),P_\star\bigr)$ is an averaging pre-Lie algebra if and only if Eqs.~\eqref{eq:mppl1}-~\eqref{eq:mppl4} hold.
In this case, we denote $\bigl((A\oplus \mathfrak b,\circ_\star),P_\star\bigr)$ by $(A\bowtie \mathfrak b,\; P_A\oplus P_{\mathfrak b})$.
\end{prop}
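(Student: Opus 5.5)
The statement splits into two independent parts: the pre-Lie part and the averaging part.

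For the pre-Lie part, I will invoke Bai's matched pair theorem \cite{Bai08}. Given that $(\mathfrak b,\rho_A,\varphi_A)$ is a representation of $(A,\circ_A)$ and $(A,\rho_{\mathfrak b},\varphi_{\mathfrak b})$ is a representation of $(\mathfrak b,\circ_{\mathfrak b})$, it says that $(A\oplus\mathfrak b,\circ_\star)$ is a pre-Lie algebra if and only if Eqs.~\eqref{eq:mppl1}--\eqref{eq:mppl4} hold. The reason is that the pre-Lie associator identity for $\circ_\star$ is trilinear, so it suffices to check it on homogeneous triples. Triples lying entirely in $A$ or entirely in $\mathfrak b$ give the pre-Lie identities of $A$ and $\mathfrak b$. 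Triples with two entries from one summand and one from the other, sorted by which slot holds the odd element, reproduce either Eqs.~\eqref{eq:ropl1}--\eqref{eq:ropl2} for the two representations or exactly the four compatibility conditions.

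For the averaging part, I will show that $P_\star$ is \emph{automatically} an averaging operator on $(A\oplus\mathfrak b,\circ_\star)$ under the standing hypotheses, independently of Eqs.~\eqref{eq:mppl1}--\eqref{eq:mppl4}. Expanding with Eq.~\eqref{eq:zh1}, we get
\[
P_\star(x+a)\circ_\star P_\star(y+b)=P_A(x)\circ_A P_A(y)+\rho_{\mathfrak b}(P_{\mathfrak b}(a))P_A(y)+\varphi_{\mathfrak b}(P_{\mathfrak b}(b))P_A(x)+P_{\mathfrak b}(a)\circ_{\mathfrak b}P_{\mathfrak b}(b)+\rho_A(P_A(x))P_{\mathfrak b}(b)+\varphi_A(P_A(y))P_{\mathfrak b}(a).
\]
In the same way we expand $P_\star\bigl(P_\star(x+a)\circ_\star(y+b)\bigr)$ and $P_\star\bigl((x+a)\circ_\star P_\star(y+b)\bigr)$, and compare the six components one by one. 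The pure components $P_A(x)\circ_A P_A(y)$ and $P_{\mathfrak b}(a)\circ_{\mathfrak b}P_{\mathfrak b}(b)$ match by Eq.~\eqref{eq:averop} for $P_A$ and $P_{\mathfrak b}$.

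The mixed components each match by one of Eqs.~\eqref{eq:rapL1}--\eqref{eq:rapL2}, applied to the appropriate representation.
\begin{itemize}
\item For $\rho_A(P_A(x))P_{\mathfrak b}(b)$, we need $P_{\mathfrak b}(\rho_A(P_A(x))b)$ and $P_{\mathfrak b}(\rho_A(x)P_{\mathfrak b}(b))$ to agree with it, which is Eq.~\eqref{eq:rapL1} with $\alpha=P_{\mathfrak b}$.
\item For $\varphi_A(P_A(y))P_{\mathfrak b}(a)$, we need $P_{\mathfrak b}(\varphi_A(y)P_{\mathfrak b}(a))$ and $P_{\mathfrak b}(\varphi_A(P_A(y))a)$ to agree with it, which is Eq.~\eqref{eq:rapL2}.
\end{itemize}
The terms involving $\rho_{\mathfrak b}$ and $\varphi_{\mathfrak b}$ are handled symmetrically, using that $((A,\rho_{\mathfrak b},\varphi_{\mathfrak b}),P_A)$ is a representation.

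Thus $P_\star$ is always averaging, so the pair $((A\oplus\mathfrak b,\circ_\star),P_\star)$ is an averaging pre-Lie algebra exactly when $\circ_\star$ is pre-Lie, that is, exactly when Eqs.~\eqref{eq:mppl1}--\eqref{eq:mppl4} hold. The only real bookkeeping risk is matching the two orders in which the averaging identity is written against the two equalities in Eqs.~\eqref{eq:rapL1}--\eqref{eq:rapL2}; that comparison of components is where I would be careful. Everything else is a direct citation of Bai's result.
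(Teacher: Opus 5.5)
Your proposal is correct and follows essentially the same route as the paper. Like the paper, you cite Bai's matched-pair theorem for the pre-Lie part, then expand the three expressions in the averaging identity and match them component by component using Eqs.~\eqref{eq:averop}, \eqref{eq:rapL1} and \eqref{eq:rapL2}. Your explicit observation that $P_\star$ is automatically averaging under the standing representation hypotheses is exactly what the paper's final ``comparing the three equalities'' step amounts to.
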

\begin{proof}
By \cite[Theorem~3.5]{Bai08}, we know that $(A\oplus \mathfrak b,\circ_\star)$ is a pre-Lie algebra if and only if Eqs.~\eqref{eq:mppl1}-~\eqref{eq:mppl4} hold. So we only need to check that $P_\star$ satisfies Eq.~\eqref{eq:averop} if and only if Eqs.~\eqref{eq:rapL1}-~\eqref{eq:rapL2} hold. In fact, for all $x,y\in A$ and $ a,b\in\mathfrak b$, we have
\begin{align*}
P_\star(x+a)\circ_\star P_\star(y+b)
&\overset{\eqref{eq:zh2}}{=} \bigl(P_A(x)+P_{\mathfrak b}(a)\bigr)\circ_\star
   \bigl(P_A(y)+P_{\mathfrak b}(b)\bigr) \\
&\overset{\eqref{eq:zh1}}{=} P_A(x)\circ_A P_A(y)
 + \rho_{\mathfrak b} \bigl(P_{\mathfrak b}(a)\bigr) P_A(y)
 + \varphi_{\mathfrak b} \bigl(P_{\mathfrak b}(b)\bigr) P_A(x) \\
&\quad
 + P_{\mathfrak b}(a)\circ_{\mathfrak b} P_{\mathfrak b}(b)
 + \rho_A \bigl(P_A(x)\bigr) P_{\mathfrak b}(b)
 + \varphi_A \bigl(P_A(y)\bigr) P_{\mathfrak b}(a),\\
P_\star\bigl(P_\star(x+a)\circ_\star (y+b)\bigr)
&\overset{\eqref{eq:zh2}}{=} P_\star\Bigl(\bigl(P_A(x)+P_{\mathfrak b}(a)\bigr)\circ_\star (y+b)\Bigr) \\
&\overset{\eqref{eq:zh1}}{=} P_\star\Bigl(
   P_A(x)\circ_A y
 + \rho_{\mathfrak b} \bigl(P_{\mathfrak b}(a)\bigr)y
 + \varphi_{\mathfrak b}(b)P_A(x) \\
&\qquad\qquad
 + P_{\mathfrak b}(a)\circ_{\mathfrak b} b
 + \rho_A \bigl(P_A(x)\bigr)b
 + \varphi_A(y)P_{\mathfrak b}(a)
 \Bigr) \\
&\overset{\eqref{eq:zh2}}{=} P_A\bigl(P_A(x)\circ_A y\bigr)
 + P_A\bigl(\rho_{\mathfrak b}(P_{\mathfrak b}(a))y\bigr)
 + P_A\bigl(\varphi_{\mathfrak b}(b)P_A(x)\bigr) \\
&\quad
 + P_{\mathfrak b}\bigl(P_{\mathfrak b}(a)\circ_{\mathfrak b} b\bigr)
 + P_{\mathfrak b}\bigl(\rho_A(P_A(x))b\bigr)
 + P_{\mathfrak b}\bigl(\varphi_A(y)P_{\mathfrak b}(a)\bigr),\\
P_\star\bigl((x+a)\circ_\star P_\star(y+b)\bigr)
&\overset{\eqref{eq:zh2}}{=} P_\star\Bigl((x+a)\circ_\star\bigl(P_A(y)+P_{\mathfrak b}(b)\bigr)\Bigr) \\
&\overset{\eqref{eq:zh1}}{=} P_\star\Bigl(
   x\circ_A P_A(y)
 + \rho_{\mathfrak b}(a)P_A(y)
 + \varphi_{\mathfrak b}\bigl(P_{\mathfrak b}(b)\bigr)x \\
&\qquad\qquad
 + a\circ_{\mathfrak b} P_{\mathfrak b}(b)
 + \rho_A(x)P_{\mathfrak b}(b)
 + \varphi_A \bigl(P_A(y)\bigr)a
 \Bigr) \\
&\overset{\eqref{eq:zh2}}{=} P_A\bigl(x\circ_A P_A(y)\bigr)
 + P_A\bigl(\rho_{\mathfrak b}(a)P_A(y)\bigr)
 + P_A\bigl(\varphi_{\mathfrak b}(P_{\mathfrak b}(b))x\bigr) \\
&\quad
 + P_{\mathfrak b}\bigl(a\circ_{\mathfrak b} P_{\mathfrak b}(b)\bigr)
 + P_{\mathfrak b}\bigl(\rho_A(x)P_{\mathfrak b}(b)\bigr)
 + P_{\mathfrak b}\bigl(\varphi_A(P_A(y))a\bigr).
\end{align*}
Then we finish the proof by comparing the three equalities above.
\end{proof}

\begin{remark}
If $\Bigl(\bigl((A,\circ_A),P_A\bigr)$, $\bigl((\mathfrak b,\circ_{\mathfrak b}),P_{\mathfrak b}\bigr)$,
$\rho_A,\varphi_A,\rho_{\mathfrak b},\varphi_{\mathfrak b}\Bigr)$ is a matched pair of averaging
pre-Lie algebras $\bigl((A,\circ_A),P_A\bigr)$ and $\bigl((\mathfrak b,\circ_{\mathfrak b}),P_{\mathfrak b}\bigr)$, then we have an averaging
pre-Lie algebra $(A\bowtie \mathfrak b,\,P_A\oplus P_{\mathfrak b})$.
\end{remark}

Let $\Bigl(\bigl((A,\circ_A),P_A\bigr)$, $\bigl((\mathfrak b,\circ_{\mathfrak b}),P_{\mathfrak b}\bigr)$,
$\rho_A,\varphi_A,\rho_{\mathfrak b},\varphi_{\mathfrak b}\Bigr)$ be a matched pair of averaging pre-Lie algebras. Then there are three induced Leibniz algebras $A_{P_A}$, $\mathfrak{b}_{P_{\mathfrak b}}$ and $(A\bowtie\mathfrak{b})_{P_A\oplus P_{\mathfrak b}}$ coming from the three averaging operators $P_A:A\to A$, $P_{\mathfrak b}:\mathfrak{b}\to\mathfrak{b}$ and $P_A\oplus P_{\mathfrak b}:A \oplus\mathfrak{b}\to A \oplus\mathfrak{b}$, respectively. In the following we will show that a matched pair of averaging pre-Lie algebras gives rise to an induced matched pair of Leibniz algebras.

\begin{defn}\cite{AM13}
Let $(\mathcal{G}_1,[\cdot,\cdot]_{\mathcal{G}_1})$ and $(\mathcal{G}_2,[\cdot,\cdot]_{\mathcal{G}_2})$ be two Leibniz algebras. A \textbf{matched pair of Leibniz algebras} is a tuple $(\mathcal{G}_1,\mathcal{G}_2;(\rho^L,\rho^R),(\mu^L,\mu^R))$, where
\begin{enumerate}
\item $(\mathcal{G}_2; \rho^L, \rho^R)$ is a representation of $(\mathcal{G}_1,[\cdot,\cdot]_{\mathcal{G}_1})$;
\item $(\mathcal{G}_1; \mu^L, \mu^R)$ is a representation of $(\mathcal{G}_2,[\cdot,\cdot]_{\mathcal{G}_2})$;
\item for all $x,y\in\mathcal{G}_1$ and $\xi,\eta\in\mathcal{G}_2$, the following identities hold:
\begin{align*}
\rho^R(x)[\xi,\eta]_{\mathcal{G}_2}
&- [\xi,\rho^R(x)\eta]_{\mathcal{G}_2} + [\eta,\rho^R(x)\xi]_{\mathcal{G}_2} - \rho^R(\mu^L(\eta)x)\xi + \rho^R(\mu^L(\xi)x)\eta =0,
\\
\rho^L(x)[\xi,\eta]_{\mathcal{G}_2}
&- [\rho^L(x)\xi,\eta]_{\mathcal{G}_2} - [\xi,\rho^L(x)\eta]_{\mathcal{G}_2} - \rho^L(\mu^R(\xi)x)\eta - \rho^R(\mu^R(\eta)x)\xi =0,
\\
[\rho^L(x)\xi,\eta]_{\mathcal{G}_2}
&+ \rho^L(\mu^R(\xi)x)\eta + [\rho^R(x)\xi,\eta]_{\mathcal{G}_2} + \rho^L(\mu^L(\xi)x)\eta =0,
\\
\mu^R(\xi)[x,y]_{\mathcal{G}_1}
&- [x,\mu^R(\xi)y]_{\mathcal{G}_1} + [y,\mu^R(\xi)x]_{\mathcal{G}_1} - \mu^R(\rho^L(y)\xi)x + \mu^R(\rho^L(x)\xi)y =0,
\\
\mu^L(\xi)[x,y]_{\mathcal{G}_1}
&- [\mu^L(\xi)x,y]_{\mathcal{G}_1} - [x,\mu^L(\xi)y]_{\mathcal{G}_1} - \mu^L(\rho^R(x)\xi)y - \mu^R(\rho^R(y)\xi)x =0,
\\
[\mu^L(\xi)x,y]_{\mathcal{G}_1}
&+ \mu^L(\rho^R(x)\xi)y + [\mu^R(\xi)x,y]_{\mathcal{G}_1} + \mu^L(\rho^L(x)\xi)y =0,
\end{align*}
\end{enumerate}
\end{defn}

\begin{prop}\cite{AM13}\label{prop:mpolbniz}
Let $(\mathcal{G}_1, \mathcal{G}_2;(\rho^L,\rho^R),(\mu^L,\mu^R))$ be a matched pair of Leibniz algebras. Then there is a Leibniz algebra structure on $\mathcal{G}_1\oplus\mathcal{G}_2$ defined by
\begin{align*}
[x+\xi,y+\eta]
&=[x,y]_{\mathcal{G}_1} + \mu^L(\xi)y + \mu^R(\eta)x +[\xi,\eta]_{\mathcal{G}_2} +\rho^L(x)\eta + \rho^R(y)\xi,
\end{align*}
for all $x,y\in \mathcal{G}_1$ and $\xi,\eta\in \mathcal{G}_2$.

Conversely, if $(\mathcal{G}_1\oplus\mathcal{G}_2,[\cdot,\cdot])$ is a Leibniz algebra such that $\mathcal{G}_1$ and $\mathcal{G}_2$ are Leibniz subalgebras, then $(\mathcal{G}_1,\mathcal{G}_2;(\rho^L,\rho^R),(\mu^L,\mu^R))$ is a matched pair of Leibniz algebras, where the representations $(\rho^L,\rho^R)$ of $\mathcal{G}_1$ on $\mathcal{G}_2$ and
$(\mu^L,\mu^R)$ of $\mathcal{G}_2$ on $\mathcal{G}_1$ are determined by
\begin{align*}
[x,\xi]=\rho^L(x)\xi+\mu^R(\xi)x,
\qquad
[\xi,x]=\rho^R(x)\xi+\mu^L(\xi)x,
\end{align*}
for all $x\in\mathcal{G}_1$ and $\xi\in\mathcal{G}_2$.
\end{prop}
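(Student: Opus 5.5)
The statement is the standard characterization of matched pairs of Leibniz algebras via the bicrossed product, so I will argue directly with the Leibniz identity on $\mathcal{G}_1\oplus\mathcal{G}_2$. Denote the bracket in the statement by $[\cdot,\cdot]$ and write
\[
J(u,v,w):=[[u,v],w]+[v,[u,w]]-[u,[v,w]],\qquad u,v,w\in\mathcal{G}_1\oplus\mathcal{G}_2 .
\]
This expression is trilinear. So $(\mathcal{G}_1\oplus\mathcal{G}_2,[\cdot,\cdot])$ is Leibniz if and only if $J$ vanishes whenever each argument lies in $\mathcal{G}_1$ or in $\mathcal{G}_2$. That gives eight types of triples, and I will sort them by how many arguments come from $\mathcal{G}_2$.

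First, the two homogeneous types. $J(x,y,z)=0$ for $x,y,z\in\mathcal{G}_1$, and $J(\xi,\eta,\zeta)=0$ for $\xi,\eta,\zeta\in\mathcal{G}_2$, simply by the Leibniz identities of $\mathcal{G}_1$ and $\mathcal{G}_2$. This uses that the restricted bracket on each summand is the original one.

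Second, the three types with exactly one argument $\xi\in\mathcal{G}_2$, namely $(x,y,\xi)$, $(x,\xi,y)$ and $(\xi,x,y)$. I expand each of them and split it into its $\mathcal{G}_1$-component and its $\mathcal{G}_2$-component.
\begin{itemize}
\item The $\mathcal{G}_2$-components involve only $\rho^L,\rho^R$ and the bracket of $\mathcal{G}_1$. Up to sign and rearrangement they are exactly Eqs.~\eqref{eq:rola1}--\eqref{eq:rola3} for $(\mathcal{G}_2;\rho^L,\rho^R)$. For example, $J(x,y,\xi)$ has $\mathcal{G}_2$-part $\rho^L([x,y])\xi+\rho^L(y)\rho^L(x)\xi-\rho^L(x)\rho^L(y)\xi$. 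Eq.~\eqref{eq:rola3} comes out of combining the $(\xi,x,y)$ and $(x,\xi,y)$ types, using the convention that $\rho^R(y)\circ\rho^L(x)$ means composition.
\item The $\mathcal{G}_1$-components mix $\mu^L,\mu^R$ with $\rho^L,\rho^R$ and $[\cdot,\cdot]_{\mathcal{G}_1}$. They should coincide with the last three displayed compatibility identities: the ones with $\mu^R(\xi)[x,y]$, with $\mu^L(\xi)[x,y]$, and with $[\mu^L(\xi)x,y]$.
\end{itemize}

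Third, the three types with two arguments $\xi,\eta$ in $\mathcal{G}_2$. These are handled symmetrically by exchanging the roles $\mathcal{G}_1\leftrightarrow\mathcal{G}_2$, $\rho\leftrightarrow\mu$:
\begin{itemize}
\item the $\mathcal{G}_1$-parts give the representation axioms of $(\mathcal{G}_1;\mu^L,\mu^R)$;
\item the $\mathcal{G}_2$-parts give the first three compatibility identities.
\end{itemize}

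For the converse, assume $\mathcal{G}_1\oplus\mathcal{G}_2$ is Leibniz with $\mathcal{G}_1$ and $\mathcal{G}_2$ subalgebras. I define $\rho^L,\rho^R,\mu^L,\mu^R$ by projecting $[x,\xi]$ and $[\xi,x]$ onto the two summands. The bracket then takes the displayed form, and the identities $J=0$ on the mixed triples, read componentwise, return exactly the representation axioms and the six compatibility identities. So the same computation proves both directions.

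The main obstacle is the bookkeeping in the one-argument and two-argument cases. I must check that each of the six stated compatibility identities, with the stated signs, appears exactly as the component of one of the $J$'s. This matters in particular for the third and sixth identities: they involve both $\mu^L$ and $\mu^R$ and should arise from the $(\xi,x,y)$ and $(x,\xi,y)$ types, possibly after adding them. I would first compute $J(\xi,x,y)+J(x,\xi,y)$. If there is a sign discrepancy, it would only reflect a convention in \cite{AM13} and could be absorbed by that sum.
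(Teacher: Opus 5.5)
Your proposal is correct. The paper gives no argument of its own for this proposition; it quotes it from \cite{AM13}, where it arises as the bicrossed-product special case of the general unified-product theorem. Your direct check of the Leibniz identity on the eight homogeneous types of triples is therefore a self-contained proof. Its advantage over the citation is that it verifies the six compatibility identities in exactly the convention $[[x,y],z]+[y,[x,z]]=[x,[y,z]]$ used in this paper.

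The bookkeeping you flag closes up with no sign problems. Let $E_k$ be the left-hand side of the $k$-th displayed compatibility identity, and put
\begin{align*}
D_1&=\rho^L([x,y])-\rho^L(x)\rho^L(y)+\rho^L(y)\rho^L(x),\\
D_2&=\rho^R([x,y])-\rho^L(x)\rho^R(y)+\rho^R(y)\rho^L(x),\\
D_3&=\rho^R(y)\rho^L(x)+\rho^R(y)\rho^R(x).
\end{align*}
\begin{itemize}
\item For $J(x,y,\xi)$, $J(x,\xi,y)$ and $J(\xi,x,y)$, the $\mathcal{G}_2$-components are $D_1\xi$, $D_2\xi$ and $(D_3-D_2)\xi$. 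Their $\mathcal{G}_1$-components are $E_4$, $E_5+E_6$ and $-E_5$.
\item For $J(\xi,\eta,x)$, $J(x,\xi,\eta)$ and $J(\xi,x,\eta)$, the $\mathcal{G}_2$-components are $E_1$, $-E_2$ and $E_2+E_3$.
\item Their $\mathcal{G}_1$-components are $D'_1x$, $(D'_3-D'_2)x$ and $D'_2x$. Here $D'_i$ is $D_i$ with $(\rho^L,\rho^R,x,y)$ replaced by $(\mu^L,\mu^R,\xi,\eta)$.
\end{itemize}
This change of variables is invertible. So the vanishing of all the $J$'s is equivalent to the two representation conditions together with $E_1=\dots=E_6=0$, which gives both directions at once, as you claim.

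Two small corrections:
\begin{itemize}
\item The third identity comes from the sum $J(\xi,x,\eta)+J(x,\xi,\eta)$ of two-$\mathcal{G}_2$ types. It does not come from $(\xi,x,y)$ and $(x,\xi,y)$; their sum yields the sixth identity and \eqref{eq:rola3}.
\item Your fallback that a sign discrepancy ``could be absorbed by that sum'' is not a legitimate step. A wrong sign in the displayed axioms would make the statement false, not merely a matter of convention. The computation above shows that no discrepancy occurs.
\end{itemize}
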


\begin{theorem}\label{theorem:mpoplinducempolb}
Let $\Bigl(\bigl((A,\circ_A),P_A\bigr)$, $\bigl((\mathfrak b,\circ_{\mathfrak b}),P_{\mathfrak b}\bigr)$,
$\rho_A,\varphi_A,\rho_{\mathfrak b},\varphi_{\mathfrak b}\Bigr)$ be a matched pair of averaging pre-Lie algebras, and $A_{P_A}$, $\mathfrak{b}_{P_{\mathfrak b}}$ the induced Leibniz algebras. Then
\[
\bigl(A_{P_A},\mathfrak{b}_{P_{\mathfrak b}};
(\rho^L_{(P_A,P_{\mathfrak b})},\rho^R_{(P_A,P_{\mathfrak b})}),(\mu^L_{(P_A,P_{\mathfrak b})},\mu^R_{(P_A,P_{\mathfrak b})})\bigr)
\]
is a matched pair of Leibniz algebras, where the representations
$(\rho^L_{(P_A,P_{\mathfrak b})},\rho^R_{(P_A,P_{\mathfrak b})})$ and $(\mu^L_{(P_A,P_{\mathfrak b})},\mu^R_{(P_A,P_{\mathfrak b})})$ are given by
\begin{align*}
\rho^L_{(P_A,P_{\mathfrak b})}(x)\xi = \rho_A(P_A(x))\xi - \varphi_A(P_A(x))\xi,
\qquad
\rho^R_{(P_A,P_{\mathfrak b})}(x)\xi = -\rho_A(x)(P_{\mathfrak b}\xi) + \varphi_A(x)(P_{\mathfrak b}\xi) ,
\\
\mu^L_{(P_A,P_{\mathfrak b})}(\xi)x = \rho_{\mathfrak b}(P_{\mathfrak b}(\xi))x - \varphi_{\mathfrak b}(P_{\mathfrak b}(\xi))x,
\qquad
\mu^R_{(P_A,P_{\mathfrak b})}(\xi)x = -\rho_{\mathfrak b}(\xi)(P_Ax) + \varphi_{\mathfrak b}(\xi)(P_A\xi),
\end{align*}
for all $x\in A$ and $\xi\in\mathfrak{b}$. Moreover, we have\[
A_{P_A} \bowtie \mathfrak{b}_{P_{\mathfrak b}}
=
\bigl(A \bowtie \mathfrak{b}\bigr)_{P_A \oplus P_{\mathfrak b}} .
\]
as Leibniz algebras.
The matched pair
$
\bigl(A_{P_A},\mathfrak{b}_{P_{\mathfrak b}};
(\rho^L_{(P_A,P_{\mathfrak b})},\rho^R_{(P_A,P_{\mathfrak b})}),(\mu^L_{(P_A,P_{\mathfrak b})},\mu^R_{(P_A,P_{\mathfrak b})})\bigr)
$
is called the \textbf{induced matched pair of Leibniz algebras}.
\end{theorem}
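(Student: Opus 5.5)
We avoid checking the six matched-pair identities of Leibniz algebras one by one. Instead we use the converse half of Proposition~\ref{prop:mpolbniz}. By Proposition~\ref{prop:mpopl}, the matched pair of averaging pre-Lie algebras gives an averaging pre-Lie algebra $(A\bowtie\mathfrak b,\,P_A\oplus P_{\mathfrak b})$. Proposition~\ref{prop:ilbniz} then makes $(A\bowtie\mathfrak b)_{P_A\oplus P_{\mathfrak b}}$ a Leibniz algebra with bracket $[u,v]=P_\star(u)\circ_\star v-v\circ_\star P_\star(u)$. So the whole proof reduces to three things: (i) $A$ and $\mathfrak b$ are Leibniz subalgebras, with induced brackets $[\cdot,\cdot]_{P_A}$ and $[\cdot,\cdot]_{P_{\mathfrak b}}$; (ii) the mixed brackets $[x,\xi]$ and $[\xi,x]$ decompose as in Proposition~\ref{prop:mpolbniz}, with the stated $\rho^{L},\rho^{R},\mu^{L},\mu^{R}$; (iii) conclude.

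For step (i), take $x,y\in A$. By Eq.~\eqref{eq:zh1} and Eq.~\eqref{eq:zh2}, $[x,y]=P_A(x)\circ_A y-y\circ_A P_A(x)=[x,y]_{P_A}$, and likewise on $\mathfrak b$. Hence both are subalgebras and carry their induced Leibniz structures.

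For step (ii), take $x\in A$ and $\xi\in\mathfrak b$. Then
\[
[x,\xi]=P_A(x)\circ_\star\xi-\xi\circ_\star P_A(x)=\varphi_{\mathfrak b}(\xi)P_A(x)+\rho_A(P_A(x))\xi-\rho_{\mathfrak b}(\xi)P_A(x)-\varphi_A(P_A(x))\xi .
\]
Its $\mathfrak b$-component is $\rho^L_{(P_A,P_{\mathfrak b})}(x)\xi$. Its $A$-component is $-\rho_{\mathfrak b}(\xi)P_A(x)+\varphi_{\mathfrak b}(\xi)P_A(x)$, which is $\mu^R(\xi)x$. Note that the formula for $\mu^R$ as printed reads ``$\varphi_{\mathfrak b}(\xi)(P_A\xi)$''; this must be read as $\varphi_{\mathfrak b}(\xi)(P_Ax)$, and I will treat it as a typo. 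Similarly,
\[
[\xi,x]=P_{\mathfrak b}(\xi)\circ_\star x-x\circ_\star P_{\mathfrak b}(\xi)=\rho_{\mathfrak b}(P_{\mathfrak b}\xi)x+\varphi_A(x)P_{\mathfrak b}\xi-\rho_A(x)P_{\mathfrak b}\xi-\varphi_{\mathfrak b}(P_{\mathfrak b}\xi)x .
\]
Its $A$-component is $\mu^L(\xi)x$ and its $\mathfrak b$-component is $\rho^R(x)\xi$.

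For step (iii), the converse part of Proposition~\ref{prop:mpolbniz} now says that these four maps form representations and satisfy the six compatibility identities, so we have a matched pair of Leibniz algebras. The equality $A_{P_A}\bowtie\mathfrak b_{P_{\mathfrak b}}=(A\bowtie\mathfrak b)_{P_A\oplus P_{\mathfrak b}}$ holds by bilinearity. Expand $[x+\xi,y+\eta]$ into the four pieces $[x,y]$, $[x,\eta]$, $[\xi,y]$, $[\xi,\eta]$. By steps (i) and (ii), the result matches term by term the bracket defined in Proposition~\ref{prop:mpolbniz}.

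The main obstacle is bookkeeping rather than any conceptual difficulty. The computations in step (ii) must place each term in the correct summand, and the left and right conventions of Proposition~\ref{prop:mpolbniz} must be respected: $[x,\xi]=\rho^L(x)\xi+\mu^R(\xi)x$ and $[\xi,x]=\rho^R(x)\xi+\mu^L(\xi)x$. The misprint in $\mu^R$ also has to be corrected. As a consistency check, the representation statements also follow independently from the earlier proposition on representations of induced Leibniz algebras, applied to $((\mathfrak b,\rho_A,\varphi_A),P_{\mathfrak b})$ and $((A,\rho_{\mathfrak b},\varphi_{\mathfrak b}),P_A)$.
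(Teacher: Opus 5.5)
Your proposal is correct and follows the paper's proof essentially step for step. Like the paper, you build $(A\bowtie\mathfrak b, P_A\oplus P_{\mathfrak b})$ via Proposition~\ref{prop:mpopl}, pass to its induced Leibniz algebra via Proposition~\ref{prop:ilbniz}, compute the mixed brackets $[x,\xi]$ and $[\xi,x]$, and apply the converse half of Proposition~\ref{prop:mpolbniz}. You are also right that the printed term $\varphi_{\mathfrak b}(\xi)(P_A\xi)$ in $\mu^R$ is a typo for $\varphi_{\mathfrak b}(\xi)(P_A x)$, which is exactly what the paper's own bracket computation produces.
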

\begin{proof}
By Proposition~\ref{prop:mpopl}, $\bigl((A\oplus \mathfrak b,\circ_\star),P_\star\bigr)$ is an averaging pre-Lie algebra. By Proposition~\ref{prop:ilbniz}, there is an induced Leibniz algebra on $A\oplus \mathfrak b$, denoted by $(A\bowtie \mathfrak b)_{P_\star}$, which contains $A_{P_A}$ and $A_{P_{\mathfrak b}}$ as Leibniz subalgebras. Furthermore, for $x\in A$ and $\xi\in\mathfrak{b}$, we have
\begin{align*}
[x,\xi]_{(A\bowtie \mathfrak{b})_{P_\star}}
&= P_\star(x)\circ_{\star}\xi-\xi\circ_{\star}P_\star(x)\\
&= P_A(x)\circ_{\star}\xi-\xi\circ_{\star}P_A(x)\\
&= \varphi_{\mathfrak{b}}(\xi)P_A(x)+\rho_A(P_A(x))\xi
   -\rho_{\mathfrak{b}}(\xi)P_A(x)-\varphi_A(P_A(x))\xi\\
&= \rho^L_{(P_A,P_{\mathfrak{b}})}(x)\xi
   +\mu^R_{(P_A,P_{\mathfrak{b}})}(\xi)x,\\
[\xi,x]_{(A\bowtie \mathfrak{b})_{P_\star}}
&= P_\star(\xi)\circ_{\star}x-x\circ_{\star}P_\star(\xi)\\
&= P_{\mathfrak{b}}(\xi)\circ_{\star}x-x\circ_{\star}P_{\mathfrak{b}}(\xi)\\
&= \rho_{\mathfrak{b}}(P_{\mathfrak{b}}(\xi))(x)
   +\varphi_A(x)P_{\mathfrak{b}}(\xi)
   -\varphi_{\mathfrak{b}}(P_{\mathfrak{b}}(\xi))(x)
   -\rho_A(x)P_{\mathfrak{b}}(\xi)\\
&= \rho^R_{(P_A,P_{\mathfrak{b}})}(x)\xi
   +\mu^L_{(P_A,P_{\mathfrak{b}})}(\xi)x.
\end{align*}
Then by Proposition~\ref{prop:mpolbniz},
$
\bigl(A_{P_A},\mathfrak{b}_{P_{\mathfrak b}};
(\rho^L_{(P_A,P_{\mathfrak b})},\rho^R_{(P_A,P_{\mathfrak b})}),(\mu^L_{(P_A,P_{\mathfrak b})},\mu^R_{(P_A,P_{\mathfrak b})})\bigr)
$
forms a matched pair of Leibniz algebras.
\end{proof}

\subsection{Manin triple of averaging pre-Lie algebras}
In this subsection, we introduce the notion of quadratic averaging pre-Lie algebras and the concept of Manin triples of averaging pre-Lie algebras. Moreover, we show that a quadratic averaging pre-Lie algebra gives rise to an isomorphism from the regular representation to the coregular representation.

\begin{defn}\cite{WBLS24}\label{def:quadratic-pre-Lie}
Let $(A,\circ)$ be a pre-Lie algebra and $\omega\in\wedge^{2}A^{*}$ a nondegenerate skew-symmetric bilinear form. If $\omega$ is invariant, i.e.,
\begin{equation}
\omega(x \circ y, z)
+\omega\bigl(y,x \circ z - z \circ x\bigr)=0,
\qquad \forall\,x,y,z\in\mathfrak{g},  \mlabel{eq:oinva}
\end{equation}
then $\bigl((A,\circ),\omega\bigr)$ is called a \textbf{quadratic pre-Lie algebra}.
\end{defn}

\begin{defn}\cite{WBLS24}\label{def:Manin-triple-pre-Lie}
Let $(A_1,\circ_1)$ and $(A_2,\circ_2)$ be two pre-Lie algebras. A \textbf{Manin triple of pre-Lie algebras} is a triple $\bigl((\mathcal{A},\circ_{\mathcal{A}},\omega),A_1,A_2\bigr)$, where
\begin{enumerate}
\item $\bigl((\mathcal{A},\circ_{\mathcal{A}} ),\omega \bigr)$ is an even dimensional quadratic pre-Lie algebra;
\item $A_1$ and $A_2$ are pre-Lie subalgebras, both isotropic with respect to $\omega$;
\item $\mathcal{A} =A_1 \oplus A_2$ as vector spaces.
\end{enumerate}
\end{defn}

Now, we introduce the notion of quadratic averaging pre-Lie algebras.
\begin{defn}\label{def:quadratic-averaging-pre-Lie}
Let $\bigl((A,\circ),P\bigr)$ be an averaging pre-Lie algebra, and $\omega \in \wedge^{2}A^{*}$ a nondegenerate skew-symmetric bilinear form.
The triple $\bigl((A,\circ),P,\omega\bigr)$ is called a \textbf{quadratic averaging pre-Lie algebra}
if $\bigl((A,\circ),\omega\bigr)$ is a quadratic pre-Lie algebra and the following compatibility condition holds:
\begin{equation}
\omega(Px,y)-\omega(x,Py)=0,
\qquad \forall\,x,y\in A.  \mlabel{eq:quavecomp}
\end{equation}
\end{defn}

\begin{theorem}\label{theorem:omegasharp}
If $\bigl((A,\circ),P,\omega\bigr)$ is a quadratic averaging pre-Lie algebra, then the linear map
\[
\omega^\sharp : A \longrightarrow A^*
\]
defined by
\[
\langle \omega^\sharp(x), y \rangle = \omega(x,y), \qquad \forall\, x,y \in A,
\]
is an isomorphism from the regular representation$\bigl((A,L,R),P\bigr)$ to the coregular representation
$\bigl((A^*,L^*-R^*,-R^*),P^*\bigr).$
\end{theorem}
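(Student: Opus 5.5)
Since $\omega$ is nondegenerate and $A$ is finite-dimensional, $\omega^\sharp$ is a linear bijection $A\to A^*$. It therefore suffices to verify the three conditions of Eq.~\eqref{eq:homocondi} with $f=\omega^\sharp$, $(\rho_1,\varphi_1,\alpha_1)=(L,R,P)$ and $(\rho_2,\varphi_2,\alpha_2)=(L^*-R^*,-R^*,P^*)$. Each condition is an identity in $A^*$, so I will pair both sides with an arbitrary $z\in A$ and reduce everything to values of $\omega$, using skew-symmetry, the invariance condition Eq.~\eqref{eq:oinva} and the compatibility condition Eq.~\eqref{eq:quavecomp}.

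\textbf{Operator $P$.} The condition is $\omega^\sharp(P(y))=P^*(\omega^\sharp(y))$. Pairing with $z$ gives
\[
\omega(P(y),z)=\langle \omega^\sharp(y),P(z)\rangle=\omega(y,P(z)),
\]
which is exactly Eq.~\eqref{eq:quavecomp}.

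\textbf{Right action.} The condition is $\omega^\sharp(y\circ x)=-R^*(x)\omega^\sharp(y)$. By Eq.~\eqref{eq:dual}, the right-hand side paired with $z$ is $\omega(y,z\circ x)$. So I need
\[
\omega(y\circ x,z)=\omega(y,z\circ x).
\]
To get this, apply Eq.~\eqref{eq:oinva} with the roles arranged as $\omega(z\circ x,y)+\omega(x,z\circ y-y\circ z)=0$. Then use Eq.~\eqref{eq:oinva} once more, in the form $\omega(y\circ z,x)=-\omega(z,y\circ x-x\circ y)$, together with the same identity with $y$ and $z$ swapped. Skew-symmetry should then let me eliminate the terms $\omega(x,\cdot)$. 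The cleanest bookkeeping is probably to write out all instances of Eq.~\eqref{eq:oinva} with $\{x,y,z\}$ permuted and solve the resulting linear system for $\omega(y\circ x,z)-\omega(y,z\circ x)$. I expect this step to be the main obstacle, because invariance in the form Eq.~\eqref{eq:oinva} is stated only for the left action.

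\textbf{Left action.} The condition is $\omega^\sharp(x\circ y)=(L^*-R^*)(x)\omega^\sharp(y)$. Paired with $z$, the right-hand side is $-\omega(y,x\circ z)+\omega(y,z\circ x)$, so the required identity
\[
\omega(x\circ y,z)=-\omega(y,x\circ z-z\circ x)
\]
is precisely Eq.~\eqref{eq:oinva}.

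Once the three conditions hold, $\omega^\sharp$ is an invertible homomorphism of representations of $\bigl((A,\circ),P\bigr)$, and hence an isomorphism from $\bigl((A,L,R),P\bigr)$ to $\bigl((A^*,L^*-R^*,-R^*),P^*\bigr)$.
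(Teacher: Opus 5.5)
Your plan is the same as the paper's: pair each condition in Eq.~\eqref{eq:homocondi} with an arbitrary $z$ and reduce to values of $\omega$. Your checks for $P$ and for the left action are correct and complete.

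\textbf{The gap is the right-action step.} You correctly reduce it to $\omega(y\circ x,z)=\omega(y,z\circ x)$, but you never derive this identity, and the specific instances you propose cannot give it. Write $w(a,b,c):=\omega(a\circ b,c)$. After skew-symmetry, Eq.~\eqref{eq:oinva} with $(x,y,z)$ replaced by $(a,b,c)$ reads $w(a,b,c)-w(a,c,b)+w(c,a,b)=0$, and the target is $w(y,x,z)+w(z,x,y)=0$. The three instances you name are
\[
w(z,x,y)-w(z,y,x)+w(y,z,x),\qquad w(y,z,x)-w(y,x,z)+w(x,y,z),\qquad w(z,y,x)-w(z,x,y)+w(x,z,y).
\]
Take a linear combination with coefficients $\alpha,\beta,\gamma$. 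The terms $w(z,x,y)$ and $w(z,y,x)$ appear only in the first and third instances, so they receive coefficients $\alpha-\gamma$ and $\gamma-\alpha$. The target needs $1$ and $0$, so no combination of the instances you name produces it. Your fallback of writing out all six permuted instances and solving would succeed, but as written the step is not carried out.

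\textbf{The fix} is to keep $x$ in the acting slot and swap only $y$ and $z$, which is what the paper does:
\begin{align*}
\omega(x\circ y,z)+\omega(y,x\circ z)-\omega(y,z\circ x)&=0,\\
\omega(x\circ z,y)+\omega(z,x\circ y)-\omega(z,y\circ x)&=0.
\end{align*}
By skew-symmetry the second line becomes $-\omega(y,x\circ z)-\omega(x\circ y,z)+\omega(y\circ x,z)=0$. Adding it to the first line gives $\omega(y\circ x,z)=\omega(y,z\circ x)$, that is, $\omega^\sharp(R(x)y)=-R^*(x)\omega^\sharp(y)$. With this step inserted, your proof is complete and coincides with the paper's.
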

\begin{proof}
For $\forall\, x,y, z \in A$, we have
\begin{align*}
\bigl\langle (L^* - R^*)(x)\,\omega^\sharp(y) -  \omega^\sharp(L(x)y),\, z \bigr\rangle
&= \bigl\langle (L^* - R^*)(x)\,\omega^\sharp(y),\, z \bigr\rangle
   - \bigl\langle \omega^\sharp(x \circ y),\, z \bigr\rangle \\
&= - \bigl\langle \omega^\sharp(y),\, (L - R)(x)z \bigr\rangle
   - \bigl\langle \omega^\sharp(x \circ y),\, z \bigr\rangle \\
&= - \bigl\langle \omega^\sharp(y),\, x \circ z - z \circ x \bigr\rangle
   - \bigl\langle \omega^\sharp(x \circ y),\, z \bigr\rangle \\
&= - \omega\bigl(y,\, x \circ z - z \circ x \bigr)
   - \omega(x \circ y,\, z) \\
&\overset{\eqref{eq:oinva}}{=} 0 ,
\end{align*}
\begin{align*}
\bigl\langle -R^*(x)\,\omega^\sharp(y) - \omega^\sharp(R(x)y),\, z \bigr\rangle
&= \bigl\langle -R^*(x)\,\omega^\sharp(y),\, z \bigr\rangle
   - \bigl\langle \omega^\sharp(R(x)y),\, z \bigr\rangle \\
&= \bigl\langle \omega^\sharp(y),\, R(x)z \bigr\rangle
   - \bigl\langle \omega^\sharp(y \circ x),\, z \bigr\rangle \\
&= \bigl\langle \omega^\sharp(y),\, z \circ x \bigr\rangle
   - \bigl\langle \omega^\sharp(y \circ x),\, z \bigr\rangle \\
&= \omega(y,\, z \circ x) - \omega(y \circ x,\, z) \\
&\overset{\eqref{eq:oinva}}{=} \omega(x \circ y,\, z) + \omega(y,\, x \circ z) - \omega(y \circ x,\, z) \\
&= \omega(x \circ y - y \circ x,\, z)
   + \omega(y,\, x \circ z) \\
& \xlongequal{\text{skew-symmetric}}- \Bigl( \omega(x \circ z,\, y)
   + \omega(z,\, x \circ y - y \circ x) \Bigr) \\
&\overset{\eqref{eq:oinva}}{=} 0 ,
\end{align*}
\begin{align*}
\bigl\langle \omega^\sharp(P(x)) - P^*(\omega^\sharp(x)),\, z \bigr\rangle
= \bigl\langle \omega^\sharp(P(x)),\, z \bigr\rangle
   - \bigl\langle \omega^\sharp(x),\, P(z) \bigr\rangle
= \omega(P(x),\, z) - \omega(x,\, P(z))
\overset{\eqref{eq:quavecomp}}{=} 0 ,
\end{align*}
which imply that Eq.~\eqref{eq:homocondi} hold.

Therefore, $\omega^\sharp : A \longrightarrow A^*$ is an isomorphism from the regular representation$\bigl((A,L,R),P\bigr)$ to the coregular representation $\bigl((A^*,L^*-R^*,-R^*),P^*\bigr).$
\end{proof}

Now we introduce the notion of Manin triples of averaging pre-Lie algebras by using quadratic averaging pre-Lie algebras given in Definition~\ref{def:quadratic-averaging-pre-Lie}.

\begin{defn}\label{def:Manin-triple-aver-pre-Lie}
A \textbf{Manin triple of averaging pre-Lie algebras} is a triple
\[
\Bigl(
\bigl((\mathcal{A},\circ_{\mathcal{A}}),\mathcal{P},\omega\bigr),
\bigl((A,\circ_A),P_A\bigr),
\bigl((\mathfrak{b},\circ_{\mathfrak{b}}),P_{\mathfrak{b}}\bigr)
\Bigr),
\]
where
\begin{enumerate}
\item $\bigl((\mathcal{A},\circ_{\mathcal{A}}),\mathcal{P},\omega\bigr)$ is an even dimensional quadratic averaging pre-Lie algebra;
\item $\bigl((A,\circ_A),P_A\bigr)$ and $\bigl((\mathfrak{b},\circ_{\mathfrak{b}}),P_{\mathfrak{b}}\bigr)$ are averaging pre-Lie subalgebras, i.e.\ $A$ and $\mathfrak{b}$ are pre-Lie subalgebras of $\mathcal{A}$ and $\mathcal{P}|_{A}=P_A$,
    $\mathcal{P}|_{\mathfrak{b}}=P_{\mathfrak{b}}$;
\item both $A$ and $\mathfrak{b}$ are isotropic with respect to $\omega$;
\item $\mathcal{A}=A\oplus\mathfrak{b}$ as vector spaces.
\end{enumerate}
\end{defn}

Obviously, we have the following characterization of Manin triple of averaging pre-Lie algebras.

\begin{prop}
A triple $\Bigl(
\bigl((\mathcal{A},\circ_{\mathcal{A}}),\mathcal{P},\omega\bigr),
\bigl((A,\circ_A),P_A\bigr),
\bigl((\mathfrak{b},\circ_{\mathfrak{b}}),P_{\mathfrak{b}}\bigr)
\Bigr)$,
where $\bigl((\mathcal{A},\circ_{\mathcal{A}}),\mathcal{P},\omega\bigr)$ is a quadratic averaging pre-Lie algebra, $\bigl((A,\circ_A),P_A\bigr)$ and $\bigl((\mathfrak{b},\circ_{\mathfrak{b}}),P_{\mathfrak{b}}\bigr)$ are averaging pre-Lie algebras, is a Manin triple of averaging pre-Lie algebras
if and only if $\Bigl(
\bigl((\mathcal{A},\circ_{\mathcal{A}}),\omega\bigr),
(A,\circ_A),(\mathfrak{b},\circ_{\mathfrak{b}})
\Bigr)$ is a Manin triple of pre-Lie algebras
such that
\[
\mathcal{P}\big|_A=P_A,\qquad \mathcal{P}\big|_{\mathfrak{b}}=P_{\mathfrak{b}}.
\]
\end{prop}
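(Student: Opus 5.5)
The statement is essentially a matter of unwinding Definition~\ref{def:Manin-triple-aver-pre-Lie} and Definition~\ref{def:Manin-triple-pre-Lie}, so I will argue by comparing their conditions one by one, taking as standing hypotheses that $\bigl((\mathcal{A},\circ_{\mathcal{A}}),\mathcal{P},\omega\bigr)$ is a quadratic averaging pre-Lie algebra and that $\bigl((A,\circ_A),P_A\bigr)$, $\bigl((\mathfrak{b},\circ_{\mathfrak{b}}),P_{\mathfrak{b}}\bigr)$ are averaging pre-Lie algebras.

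For the forward direction, I assume the triple is a Manin triple of averaging pre-Lie algebras. By Definition~\ref{def:quadratic-averaging-pre-Lie}, $\bigl((\mathcal{A},\circ_{\mathcal{A}}),\omega\bigr)$ is a quadratic pre-Lie algebra, and it is even dimensional by condition (1). Condition (2) states that $A$ and $\mathfrak{b}$ are pre-Lie subalgebras and that $\mathcal{P}|_A=P_A$, $\mathcal{P}|_{\mathfrak{b}}=P_{\mathfrak{b}}$. Conditions (3) and (4) give isotropy and $\mathcal{A}=A\oplus\mathfrak{b}$. Reading these off produces exactly the three conditions of Definition~\ref{def:Manin-triple-pre-Lie} together with the two restriction identities.

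For the converse, I assume $\bigl(((\mathcal{A},\circ_{\mathcal{A}}),\omega),A,\mathfrak{b}\bigr)$ is a Manin triple of pre-Lie algebras with $\mathcal{P}|_A=P_A$ and $\mathcal{P}|_{\mathfrak{b}}=P_{\mathfrak{b}}$. Even dimensionality, isotropy and the direct sum decomposition transfer verbatim. For condition (2), the pre-Lie subalgebra property is given, and the restriction identities are assumed. The one point to note is that $\mathcal{P}|_A=P_A$ forces $\mathcal{P}(A)\subseteq A$, so $A$ is $\mathcal{P}$-stable, and likewise for $\mathfrak{b}$. Moreover $P_A$ satisfies Eq.~\eqref{eq:averop} automatically, being the restriction of the averaging operator $\mathcal{P}$ to a subalgebra. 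This is consistent with the hypothesis that $\bigl((A,\circ_A),P_A\bigr)$ is averaging.

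There is no real obstacle. The only care needed is in checking that ``averaging pre-Lie subalgebra'' requires nothing beyond being a pre-Lie subalgebra plus the restriction condition, which is precisely how Definition~\ref{def:Manin-triple-aver-pre-Lie}(2) phrases it.
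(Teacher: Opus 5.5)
Your proposal is correct and matches the paper, which calls this characterization obvious and offers no argument beyond the direct comparison of Definition~\ref{def:Manin-triple-aver-pre-Lie} with Definition~\ref{def:Manin-triple-pre-Lie} that you carry out. Your side remarks are accurate: $\mathcal{P}|_A=P_A$ makes $A$ (and likewise $\mathfrak{b}$) $\mathcal{P}$-stable, and the averaging identity for $P_A$ is inherited from $\mathcal{P}$.
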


\section{Averaging pre-Lie bialgebras and averaging operators on quadratic Rota-Baxter pre-Lie algebras} \label{sec:averbiandaverop}

In this section, we introduce the definitions of averaging pre-Lie bialgebras and that of averaging operators on quadratic Rota-Baxter pre-Lie algebras. Moreover, we show that a quadratic Rota-Baxter pre-Lie algebra and its averaging operator gives rise to an averaging pre-Lie bialgebra.

\subsection{Averaging pre-Lie bialgebras}

In this subsection, we introduce the notion of averaging pre-Lie bialgebras. Moreover, under certain conditions, we show that matched pairs of averaging pre-Lie algebras, Manin triples of averaging pre-Lie algebras, and averaging pre-Lie bialgebras are equivalent.

\begin{defn}
An \textbf{averaging pre-Lie coalgebra} is a pair $\big( (A,\Delta), S\big)$, where $(A,\Delta)$ is a pre-Lie coalgebra and $S : A \to A$ is a linear map such that
\begin{equation}
(S \otimes S)\Delta(x) = (S \otimes \mathrm{id}_A)\Delta(S(x)) = (\mathrm{id}_A \otimes S)\Delta(S(x)),
\qquad \forall x \in A.  \mlabel{eq:aapLieco}
\end{equation}
\end{defn}

\begin{prop}\label{prop:dual-averaging-preLie}
Let $\bigl((A,\Delta),S\bigr)$ be an averaging pre-Lie coalgebra. Define a linear map $S^{*}:A^{*}\to A^{*}$ by
\begin{align*}
\langle S^{*}(\xi),x\rangle=\langle \xi,S(x)\rangle,\qquad \forall\,\xi\in A^{*},\ x\in A, \mlabel{eq:def-S-star}
\end{align*}
and define a bilinear map $\circ_{A^*}:A^{*}\otimes A^{*}\to A^{*}$ by
\begin{equation}
\langle \xi \circ_{A^*} \eta, x\rangle=\langle \xi\otimes\eta,\Delta(x)\rangle,
\qquad \forall\,\xi,\eta\in A^{*},\ x\in A. \mlabel{eq:def-circ-star}
\end{equation}
Then $\bigl((A^{*},\circ_{A^*}),S^{*}\bigr)$ is an averaging pre-Lie algebra.
\end{prop}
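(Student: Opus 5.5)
The proof has two independent parts: first that $(A^*,\circ_{A^*})$ is a pre-Lie algebra, then that $S^*$ is an averaging operator on it. Both come from dualizing the defining identities of the coalgebra through the pairing $\langle \xi\otimes\eta\otimes\zeta, x\otimes y\otimes z\rangle=\langle\xi,x\rangle\langle\eta,y\rangle\langle\zeta,z\rangle$. Because all spaces are finite-dimensional, two elements of $A^*$ agree exactly when they pair equally with every $x\in A$. So each identity in $A^*$ reduces to the corresponding identity for $\Delta$ paired against $\xi\otimes\eta\otimes\zeta$ or $\xi\otimes\eta$.

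For the pre-Lie part, the plan is to unwind \eqref{eq:def-circ-star} twice:
\[
\langle (\xi\circ_{A^*}\eta)\circ_{A^*}\zeta,x\rangle=\langle \xi\otimes\eta\otimes\zeta,(\Delta\otimes\mathrm{id})\Delta(x)\rangle,\qquad
\langle \xi\circ_{A^*}(\eta\circ_{A^*}\zeta),x\rangle=\langle \xi\otimes\eta\otimes\zeta,(\mathrm{id}\otimes\Delta)\Delta(x)\rangle .
\]
Swapping $\xi$ and $\eta$ in these expressions is the same as applying $\tau\otimes\mathrm{id}$ to the tensor in $A^{\otimes 3}$. Hence the left pre-Lie identity \eqref{eq:preLie} in $A^*$ is exactly \eqref{eq:preLieco} paired with $\xi\otimes\eta\otimes\zeta$. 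This is the standard fact that the dual of a pre-Lie coalgebra is a pre-Lie algebra, as in \cite{Bai08}, and it can simply be cited.

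For the averaging part, I would compute the three terms of \eqref{eq:averop} for $S^*$:
\[
\langle S^*\xi\circ_{A^*}S^*\eta,x\rangle=\langle \xi\otimes\eta,(S\otimes S)\Delta(x)\rangle,\qquad
\langle S^*(S^*\xi\circ_{A^*}\eta),x\rangle=\langle \xi\otimes\eta,(S\otimes\mathrm{id})\Delta(S(x))\rangle,
\]
and the remaining term pairs with $(\mathrm{id}\otimes S)\Delta(S(x))$. Each of these uses $\langle S^*\xi,y\rangle=\langle\xi,S(y)\rangle$ once or twice. Equality of the three terms for all $\xi,\eta,x$ is then precisely \eqref{eq:aapLieco}.

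There is no genuine obstacle. The only point needing care is the order of the tensor legs under $\tau\otimes\mathrm{id}$, so that the dual of \eqref{eq:preLieco} gives the left pre-Lie identity and not the right-handed one. I would verify this on a pure tensor $\Delta(x)=\sum x_{(1)}\otimes x_{(2)}$ written in Sweedler notation.
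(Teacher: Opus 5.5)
Your proposal is correct and is the same direct dualization the paper intends; the paper's proof only says ``It's a direct calculation'', and you carry it out. The pre-Lie identity for $\circ_{A^*}$ is Eq.~\eqref{eq:preLieco} paired with $\xi\otimes\eta\otimes\zeta$, with the $\tau\otimes\mathrm{id}$ leg order as you describe, and the averaging identity for $S^*$ is Eq.~\eqref{eq:aapLieco} paired with $\xi\otimes\eta$.
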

\begin{proof}
It's a direct calculation.
\end{proof}

\begin{defn} \label{def:apLbi}
An \textbf{averaging pre-Lie bialgebra} is a quintuple $(A,\circ,\Delta,P,S)$, where
\begin{enumerate}
\item $\big((A,\circ),P\big)$ is an averaging pre-Lie algebra;
\item $\big((A,\Delta),S\big)$ is an averaging pre-Lie coalgebra;
\item $(A,\circ,\Delta)$ is a pre-Lie bialgebra;
\item $\big((A,\circ),P\big)$ is $S$-admissible, i.e.\ Eqs.~\eqref{eq:regu1} and \eqref{eq:regu2} hold;  \mlabel{eq:apLbid}
\item $\big((A^*,\Delta^* := \circ_{A^*}),S^*\big)$ is $P^*$-admissible, i.e.\ for all $x\in A$,  \mlabel{eq:apLbie}
\begin{equation}
(P \otimes S)\Delta(x) = (\mathrm{id}_A \otimes S)\Delta(P(x)) = (P \otimes \mathrm{id}_A)\Delta(P(x)), \mlabel{eq:aplb1}
\end{equation}
\begin{equation}
(S \otimes P)\Delta(x) = (S \otimes \mathrm{id}_A)\Delta(P(x)) = (\mathrm{id}_A \otimes P)\Delta(P(x)). \mlabel{eq:aplb2}
\end{equation}
\end{enumerate}
\end{defn}

\begin{coro}
If $(A,\circ,\Delta,P,P)$ is an averaging pre-Lie bialgebra, then $(A^*,\circ_{A^*},\Delta_{A^*},P^*,P^*)$ is an averaging pre-Lie bialgebra where $\circ_{A^*}$ is given by Eq.~\eqref{eq:def-circ-star} and $\Delta_{A^*}: A^* \to A^* \otimes A^* $ is given by the following equation
\begin{align*}
\langle \Delta_{A^*}(\xi) , x \otimes y\rangle = \langle \xi ,x \circ y\rangle,
\qquad \forall \xi\in A^{*},\ x, y\in A.
\end{align*}
\end{coro}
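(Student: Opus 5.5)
We check the five conditions of Definition~\ref{def:apLbi} for the quintuple $(A^*,\circ_{A^*},\Delta_{A^*},P^*,P^*)$, using $A^{**}\cong A$ (everything is finite-dimensional) and dualizing each condition for $(A,\circ,\Delta,P,P)$. Every identity on $A^*$ is tested by pairing with elements of $A$ or $A\otimes A$. Under this pairing, $\circ_{A^*}$ is the dual of $\Delta$, $\Delta_{A^*}$ is the dual of $\circ$, and $P^*$ is the dual of $P$.

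\textbf{Conditions (1) and (2).} By Proposition~\ref{prop:dual-averaging-preLie} applied to $((A,\Delta),P)$, the pair $((A^*,\circ_{A^*}),P^*)$ is an averaging pre-Lie algebra. Next, $\Delta_{A^*}=\circ^*$ is a pre-Lie coalgebra because \eqref{eq:preLie} dualizes exactly to \eqref{eq:preLieco}. To see that $P^*$ is an averaging operator on this coalgebra, pair with $x\otimes y$. This gives
\[
\langle (P^*\otimes P^*)\Delta_{A^*}(\xi),x\otimes y\rangle=\langle \xi,P(x)\circ P(y)\rangle,
\]
\[
\langle (P^*\otimes \mathrm{id})\Delta_{A^*}(P^*\xi),x\otimes y\rangle=\langle \xi,P(P(x)\circ y)\rangle,
\]
and similarly $\langle \xi,P(x\circ P(y))\rangle$ for the third term. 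So \eqref{eq:aapLieco} for $P^*$ is exactly \eqref{eq:averop} for $P$.

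\textbf{Condition (3).} The requirement that $(A^*,\circ_{A^*},\Delta_{A^*})$ be a pre-Lie bialgebra is Bai's self-duality of pre-Lie bialgebras \cite{Bai08}. Both are equivalent to the same matched pair $(A,A^*,L^*-R^*,-R^*,\ldots)$, and that matched pair is symmetric in the two factors. We quote this result rather than recompute it.

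\textbf{Conditions (4) and (5), and the expected obstacle.} Condition (4) asks that $((A^*,\circ_{A^*}),P^*)$ be $P^*$-admissible, i.e.\ \eqref{eq:regu1}--\eqref{eq:regu2} with $\circ_{A^*}$ and $P^*$ in place of $\circ$, $P$ and $S$. Pairing $\xi\circ_{A^*}\eta$ with $x$ and transposing, these identities become \eqref{eq:aplb1}--\eqref{eq:aplb2} for $\Delta$ with $S=P$, which hold by hypothesis (5) on $A$. Conversely, condition (5) for the dual object involves the coproduct $\Delta_{A^*}$ and the admissibility of $((A^{**},\Delta_{A^*}^*),P^{**})=((A,\circ),P)$ with respect to $P^{**}=P$. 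This unwinds to \eqref{eq:regu1}--\eqref{eq:regu2} with $S=P$, which is hypothesis (4) on $A$; in fact it is even automatic from \eqref{eq:averop}.

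The only real care needed is the bookkeeping of tensor-factor order when transposing. For example, $\langle(P\otimes S)\Delta(x),\xi\otimes\eta\rangle=\langle \Delta(x),S^*... \rangle$ must match $P^*(\xi)\circ_{A^*}S^*(\eta)$ with the correct roles of $S$ and $P$. We will also need to confirm that the specialization $S=P$ is exactly what makes the swapped pairs of conditions coincide. This is why the corollary is stated only for $S=P$: for general $S$, the roles of $P$ and $S$ are exchanged under duality.
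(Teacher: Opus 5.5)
Your proposal is correct and follows the paper's approach: it is the direct check against Definition~\ref{def:apLbi} that the paper invokes without further detail. Conditions (1)--(3) for $A^*$ come from dualizing conditions (2), (1), (3) for $A$, the last via the symmetry of Bai's matched-pair characterization, and with $S=P$ conditions (4) and (5) reduce to those same averaging identities. The only slip is the garbled pairing in your last paragraph, which should read $\langle (P\otimes S)\Delta(x),\xi\otimes\eta\rangle=\langle \Delta(x),P^*\xi\otimes S^*\eta\rangle=\langle P^*\xi\circ_{A^*}S^*\eta,x\rangle$; this is harmless once $S=P$.
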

\begin{proof}
It can be verified directly from Definition ~\ref{def:apLbi}.
\end{proof}

The following theorem provides an equivalent characterization of averaging pre-Lie bialgebras via matched pairs of averaging pre-Lie algebras.

\begin{theorem} \label{theorem:mpoplbi}
Let $\bigl((A,\circ),P\bigr)$ and $\bigl((A^{*},\circ_{A^*}),S^{*}\bigr)$ be averaging pre-Lie algebras.
Then the quadruple $(A,\circ,\Delta,P,S)$ is an averaging pre-Lie bialgebra where $\Delta$ is given by Eq.~\eqref{eq:def-circ-star} if and only if
\[
\Big(\bigl((A,\circ),P\bigr),\;
\bigl((A^{*},\circ_{A^*}),S^{*}\bigr),\;
L_{\circ}^{*}-R_{\circ}^{*},\;
-\,R_{\circ}^{*},\;
L_{\circ_{A^*}}^{*}-R_{\circ_{A^*}}^{*},\;
-\,R_{\circ_{A^*}}^{*}\Big)
\]
is a matched pair of averaging pre-Lie algebras
$\bigl((A,\circ),P\bigr)$ and $\bigl((A^{*},\circ_{A^*}),S^{*}\bigr)$.
\begin{proof}
According to \cite[Proposition~4.2]{Bai08}, we find that $(A,\circ,\Delta)$ is a
pre-Lie bialgebra if and only if
\[
\Big( \bigl((A,\circ),(A^{*},\circ_{A^*})\bigr),L_{\circ}^{*}-R_{\circ}^{*},\;
-\,R_{\circ}^{*},\;
L_{\circ_{A^*}}^{*}-R_{\circ_{A^*}}^{*},\;
-\,R_{\circ_{A^*}}^{*}
\Big)
\]
is a matched pair of pre-Lie algebras $(A,\circ)$ and $(A^{*},\circ_{A^*})$.
The rest of the argument is obtained by Items ~\eqref{eq:apLbid} and ~\eqref{eq:apLbie} in Definition~\ref{def:apLbi}.
\end{proof}
\end{theorem}

According to ~\cite{WBLS24}, on the one hand, given a pre-Lie bialgebra $(A, \circ, \Delta)$, the triple $\bigl((A\bowtie A^*,\omega),A,A^*\bigr)$ is a Manin triple of pre-Lie algebras, where $\omega$ is given by the following identity
\begin{equation}
\omega(x+\xi,y+\eta)=<\xi,y>-<\eta,x>,
\qquad
\forall\,x,y\in A,\ \xi,\eta\in A^*. \mlabel{eq:Maform}
\end{equation}

On the other hand, given a Manin triple $\bigl((\mathcal{A},\circ_{\mathcal{A}},\omega),A,\mathfrak{b}\bigr)$, identifying
$\mathfrak{b}$ with $A^*$ by using the nondegenerate invariant skew-symmetric bilinear form $\omega$, we can obtain a pre-Lie bialgebra
$(A,\circ, \Delta)$, where $\circ,\Delta$ are defined
respectively by
\begin{align*}
x \circ y = -x \circ_{\mathcal{A}} y,
\qquad
\langle \Delta(x),\xi\otimes\eta\rangle
= \langle x,\xi\circ_{\mathcal{A}}\eta\rangle, \mlabel{eq:Maalphabeta}
\end{align*}
for all $x,y\in A$ and $\xi,\eta\in A^*$.

Similar to the classical case, we have the following result.
\begin{theorem} \label{theorem:mtriandaplbi}
There is a one-to-one correspondence between Manin triple of averaging pre-Lie algebras and averaging pre-Lie bialgebras.
\end{theorem}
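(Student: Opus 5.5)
We will build both directions on the classical correspondence between pre-Lie bialgebras and Manin triples recalled just before the statement, and on Theorem~\ref{theorem:mpoplbi}. The only new ingredient to manage is the averaging operators. Throughout, we identify $\mathfrak b$ with $A^*$ via $\omega$, so that $\omega$ takes the standard form \eqref{eq:Maform}.

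\emph{From bialgebra to Manin triple.} Let $(A,\circ,\Delta,P,S)$ be an averaging pre-Lie bialgebra. By Theorem~\ref{theorem:mpoplbi}, the sextuple
\[
\bigl(((A,\circ),P),((A^*,\circ_{A^*}),S^*),L^*-R^*,-R^*,L^*_{\circ_{A^*}}-R^*_{\circ_{A^*}},-R^*_{\circ_{A^*}}\bigr)
\]
is a matched pair of averaging pre-Lie algebras. Proposition~\ref{prop:mpopl} then makes $(A\bowtie A^*,P\oplus S^*)$ an averaging pre-Lie algebra. By \cite{WBLS24}, the form $\omega$ of \eqref{eq:Maform} is invariant, nondegenerate and skew-symmetric, and $A$, $A^*$ are isotropic subalgebras. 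The remaining condition is \eqref{eq:quavecomp} for $\mathcal P=P\oplus S^*$. Here we will have to be careful, since
\[
\omega\bigl(\mathcal P(x+\xi),y+\eta\bigr)=\langle S^*\xi,y\rangle-\langle\eta,Px\rangle,
\qquad
\omega\bigl(x+\xi,\mathcal P(y+\eta)\bigr)=\langle \xi,Py\rangle-\langle S^*\eta,x\rangle .
\]
Equality of the two requires $\langle\xi,Sy\rangle=\langle\xi,Py\rangle$, that is $S=P$. So the correspondence must be read either with the natural operator on the double chosen so that \eqref{eq:quavecomp} holds, or (more likely, as in the classical averaging Lie case) with the Manin triple carrying $\mathcal P=P\oplus P^*$ and the bialgebra of the form $(A,\circ,\Delta,P,P)$. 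We will state this as the precise reading of the correspondence: $S$ is forced to equal $P$ by \eqref{eq:quavecomp}, and under $S=P$ the compatibility is immediate from the computation above.

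\emph{From Manin triple to bialgebra.} Let a Manin triple of averaging pre-Lie algebras be given. Forgetting the operators, it is a Manin triple of pre-Lie algebras, so by the recalled classical result $(A,\circ,\Delta)$ is a pre-Lie bialgebra. Using \eqref{eq:quavecomp} together with the isotropy of $A$ and $\mathfrak b$, we will check that, under the identification $\mathfrak b\cong A^*$ given by $\omega^\sharp$-type pairing, the operator $P_{\mathfrak b}$ corresponds to $P_A^*$. Thus $S=P_A$. Since $\mathcal P=P_A\oplus P_{\mathfrak b}$ is an averaging operator on the double, Proposition~\ref{prop:mpopl} shows that we have a matched pair of averaging pre-Lie algebras. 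Theorem~\ref{theorem:mpoplbi} then yields conditions (iv) and (v) of Definition~\ref{def:apLbi}. The averaging coalgebra condition \eqref{eq:aapLieco} is the dual of $P_{\mathfrak b}=P^*$ being averaging on $A^*$, by Proposition~\ref{prop:dual-averaging-preLie}.

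Finally, we will check that the two constructions are mutually inverse. This follows from the classical bijection together with the fact that the operators $P\oplus P^*$ are recovered on both sides. The main obstacle is the bookkeeping in identifying the averaging operator on $\mathfrak b$ with the dual of the one on $A$, including the sign conventions of \eqref{eq:Maform}, and making explicit that this forces $S=P$.
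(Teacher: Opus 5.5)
Your proposal is correct and follows essentially the same route as the paper. In one direction you build the double $A\bowtie A^*$ through the matched pair and Proposition~\ref{prop:mpopl} and check \eqref{eq:quavecomp} for $P\oplus P^*$; in the other you identify $\mathfrak b\cong A^*$ via $\omega$, use \eqref{eq:quavecomp} to get $P_{\mathfrak b}=P_A^*$, and note that \eqref{eq:quavecomp} forces $S=P$. That is the same restriction the paper makes implicitly by only treating bialgebras written as $(A,\circ,\Delta,P,P^*)$, that is, with $S=P$.
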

\begin{proof}
Let $(A,\circ,\Delta,P,P^*)$ be an averaging pre-Lie bialgebra. Then by Proposition~\ref{prop:dual-averaging-preLie}, $\bigl((A^*,\circ_{A^*}),P^*\bigr)$ is an averaging pre-Lie algebra. From item~(\ref{eq:apLbid}) and item~(\ref{eq:apLbie}) of Definition~\ref{def:apLbi}, we know that $\big((A^*,L_{\circ}^* - R_{\circ}^{*}, -R_{\circ}^{*}),P^*\big)$ is a representation of $\big((A,\circ),P\big)$ and $\big((A,L_{\circ_{A^*}}^{*}-R_{\circ_{A^*}}^{*},-R_{\circ_{A^*}}^{*}),P\big)$ is a representation of $\big((A^*,\circ_{A^*}),P^*\big)$, then by Proposition~\ref{prop:mpopl}, $(A\bowtie A^*,\; P\oplus P^*)$ is an averaging pre-Lie algebra. Moreover, since $\omega$ is given by Eq.~\eqref{eq:Maform}, we have
\begin{align*}
&\quad \omega\bigl((P\oplus P^{*})(x+\xi),\,y+\eta\bigr)
-\omega\bigl(x+\xi,\,(P\oplus P^{*})(y+\eta)\bigr)\\
&= \omega\bigl(P(x)+P^{*}(\xi),\,y+\eta\bigr)
-\omega\bigl(x+\xi,\,P(y)+P^{*}(\eta)\bigr)\\
&\overset{\eqref{eq:Maform}}{=} \langle P^{*}(\xi),y\rangle-\langle \eta,P(x)\rangle
-\langle \xi,P(y)\rangle+\langle P^{*}(\eta),x\rangle\\
&=0,
\end{align*}
which implies that $\bigl((A\bowtie A^*,\; P\oplus P^*),\omega\bigr)$ is a quadratic averaging pre-Lie algebra. Consequently,
$\Bigl(
\bigl((A\bowtie A^*,\; P\oplus P^*),\omega\bigr),
\bigl((A,\circ),P\bigr),
\bigl((A^*,\circ_{A^*}),P^*\bigr)
\Bigr)$
is a Manin triple of averaging pre-Lie algebras.

Conversely, let
$\Bigl(
\bigl((\mathcal{A}=A \bowtie \mathfrak{b},\circ_{\mathcal{A}}),\mathcal{P},\omega\bigr),
\bigl((A,\circ_A),P_A\bigr),
\bigl((\mathfrak{b},\circ_{\mathfrak{b}}),P_{\mathfrak{b}}\bigr)
\Bigr)$
be a Manin triple of averaging pre-Lie algebras. Then, similar to the classical argument, first we can identify $\mathfrak{b}$ with $A^{*}$ by using the nondegenerate invariant skew-symmetric bilinear form $\omega$, and we can obtain a pre-Lie bialgebra $(A,\circ,\Delta)$. Then we can identify $P_{\mathfrak{b}}$ with $P_A^{*}$ by Eq.~\eqref{eq:quavecomp}. Consequently, both $\bigl((A,\circ_A),P_A\bigr)$ and
$\bigl((\mathfrak{b},\circ_{\mathfrak{b}}),P_{\mathfrak{b}}\bigr)$ are averaging pre-Lie algebras, i.e.
$(A,\circ,\Delta,P_A,P_{A}^{*})$ is an averaging pre-Lie bialgebra.
\end{proof}

\begin{coro}
Let $((A,\circ),P)$ and $((A^{*},\circ_{A^*}),P^{*})$ be averaging pre-Lie algebras.
Then the following statements are equivalent:
\begin{enumerate}
  \item The quintuple $(A,\circ,\Delta,P,P^{*})$ is an averaging pre-Lie bialgebra.

  \item $\bigl(((A,\circ),P),((A^{*},\circ^{*}),P^{*}),\,L_{\circ}^{*}-R_{\circ}^{*},\,-R_{\circ}^{*},
      \,L_{\circ_{A^*}}^{*}-R_{\circ_{A^*}}^{*},\,-R_{\circ_{A^*}}^{*}\bigr)$
  is a matched pair of averaging pre-Lie algebras.

  \item $
  \bigl((A\oplus A^{*},\,\circ_{\star},\,P\oplus P^{*},\,\omega),\ ((A,\circ),P),\ ( (A^{*},\circ_{A^*}),P^{*})\bigr)
  $
  is a Manin triple of averaging pre-Lie algebras,
  where $A$ and $A^{*}$ are isotropic averaging pre-Lie subalgebras of $A\bowtie A^{*}$.
\end{enumerate}
\end{coro}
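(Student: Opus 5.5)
The plan is to prove the cycle (1)$\Leftrightarrow$(2) and (1)$\Leftrightarrow$(3) by specializing the two theorems just established. The coalgebra operator is taken to be $S=P$, so that the dual operator is $S^*=P^*$.

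For (1)$\Leftrightarrow$(2), I would apply Theorem~\ref{theorem:mpoplbi} with $S=P$. Its hypotheses are that $((A,\circ),P)$ and $((A^*,\circ_{A^*}),P^*)$ are averaging pre-Lie algebras, and these are exactly the standing assumptions of the corollary. The theorem then says that $(A,\circ,\Delta,P,P)$ is an averaging pre-Lie bialgebra if and only if the stated sextuple is a matched pair of averaging pre-Lie algebras. I read the quintuple in (1) as $(A,\circ,\Delta,P,S)$ with $S^*=P^*$, i.e.\ $S=P$, in line with the convention of Theorem~\ref{theorem:mtriandaplbi}. Concretely, the conditions to check are:
\begin{itemize}
\item items (iv)--(v) of Definition~\ref{def:apLbi} with $S=P$, which become Corollary~\ref{coro:admiss} with $S=P$ together with its dual version;
\item these say precisely that $((A^*,L^*_\circ-R^*_\circ,-R^*_\circ),P^*)$ and $((A,L^*_{\circ_{A^*}}-R^*_{\circ_{A^*}},-R^*_{\circ_{A^*}}),P)$ are representations;
\item the pre-Lie bialgebra condition is equivalent to the pre-Lie matched pair condition by \cite[Proposition~4.2]{Bai08}.
\end{itemize}

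For (2)$\Rightarrow$(3), Proposition~\ref{prop:mpopl} gives that $(A\bowtie A^*,P\oplus P^*)$ is an averaging pre-Lie algebra. I would then cite \cite{WBLS24} for the fact that $\omega$ in Eq.~\eqref{eq:Maform} is invariant and nondegenerate on the pre-Lie double, and that $A$ and $A^*$ are isotropic subalgebras. Next I would verify Eq.~\eqref{eq:quavecomp} for $P\oplus P^*$, exactly as in the computation in the proof of Theorem~\ref{theorem:mtriandaplbi}. Finally, the restrictions of $P\oplus P^*$ to $A$ and to $A^*$ are $P$ and $P^*$ by construction.

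For (3)$\Rightarrow$(1), I would follow the converse half of Theorem~\ref{theorem:mtriandaplbi}.
\begin{itemize}
\item The underlying pre-Lie Manin triple, with $\mathfrak b=A^*$ already identified via the pairing $\omega$, yields the pre-Lie bialgebra $(A,\circ,\Delta)$ by \cite{WBLS24}.
\item The multiplication on $A^*$ restricted from $\circ_\star$ is $\circ_{A^*}$, so $\Delta$ is the dual of $\circ_{A^*}$ as in Eq.~\eqref{eq:def-circ-star}.
\item The averaging operator $P\oplus P^*$ on the double restricts to $P$ and $P^*$. Because the double is an averaging pre-Lie algebra, Proposition~\ref{prop:mpopl} applies in reverse: the averaging identity on mixed elements $x+\xi$ forces the representation compatibilities Eqs.~\eqref{eq:rapL1}--\eqref{eq:rapL2} for both dual actions. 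By Corollary~\ref{coro:admiss} and its dual form, these are items (iv) and (v) of Definition~\ref{def:apLbi}.
\end{itemize}

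The main obstacle is this last step. Proposition~\ref{prop:mpopl} is phrased as ``assume representations, then averaging if and only if the matched pair equations hold''. For the reverse direction I need to extract the representation conditions from the averaging identity itself. To do this I would compare the $A$- and $A^*$-components of $P_\star(x)\circ_\star P_\star(\eta)$, $P_\star(P_\star(x)\circ_\star\eta)$ and $P_\star(x\circ_\star P_\star(\eta))$, and similarly with the roles swapped. Since the components separate, each gives one of Eqs.~\eqref{eq:rapL1}--\eqref{eq:rapL2}.
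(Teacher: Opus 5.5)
Your proposal is correct and follows the paper's route: the paper derives the corollary directly from Theorem~\ref{theorem:mpoplbi} (with $S=P$) and Theorem~\ref{theorem:mtriandaplbi}, and your reading of the fifth entry as $S=P$ is the one forced by Eq.~\eqref{eq:quavecomp} on the double. The ``main obstacle'' you flag in (3)$\Rightarrow$(1) is in fact vacuous: with $S=P$, Eqs.~\eqref{eq:regu1}--\eqref{eq:regu2} and \eqref{eq:aplb1}--\eqref{eq:aplb2} reduce to the averaging identities for $P$ and $P^*$ (the coregular-representation corollary and its dual), so your component comparison is valid but not needed.
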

\begin{proof}
It follows directly from Theorems ~\ref{theorem:mpoplbi} and ~\ref{theorem:mtriandaplbi}.
\end{proof}

\subsection{Averaging operators on quadratic Rota-Baxter pre-Lie algebras}

Recall that a pre-Lie bialgebra $(A,\circ,\Delta)$ is called \textbf{coboundary} if there exists an $r \in A\otimes A$ such that
\begin{equation}
\Delta_r(x):= (L_x \otimes \mathrm{id} + \mathrm{id} \otimes (L_x - R_x))r, \quad \forall x \in A. \mlabel{eq:deltar}
\end{equation}

Let $r=\sum_i a_i\otimes b_i \in A \otimes  A$.
Let $U(A)$ be the universal enveloping algebra of the pre-Lie algebra $(A, \circ)$. We introduce $r_{12},\,r_{13},\,r_{23}$ in
$U(A)\otimes U(A)\otimes U(A)$ as follows:
\[
r_{12}=\sum_i a_i\otimes b_i\otimes 1,\qquad
r_{23}=\sum_i 1\otimes a_i\otimes b_i,\qquad
r_{13}=\sum_i a_i\otimes 1\otimes b_i,
\]
and the bilinear map $r_{12}\circ r_{13}$ is defined by
\[
r_{12}\circ r_{13}
=(\sum_i a_i\otimes b_i\otimes 1)\circ (\sum_j a_j\otimes 1\otimes b_j)
=\sum_{i,j}a_i \circ a_j \otimes b_i \otimes b_j,
\]
and similarly for $r_{13}\circ r_{23}$ and $r_{12}\circ r_{23}$.

\begin{theorem}\cite{Bai08}\label{theorem:preliebi}
Let $(A,\circ)$ be a pre-Lie algebra and $r \in A\otimes A$. Writing $r$ as $r=a+\Lambda$ with $a\in \wedge^2 A$ and $\Lambda\in S^2(A)$. Then the map $\Delta_r$ defined by Eq.~\eqref{eq:deltar} makes $(A, \Delta_r)$ into a pre-Lie coalgebra such that $(A, \circ, \Delta_r)$ is a pre-Lie bialgebra if and only if for all $x,y\in A$, the following two conditions are satisfied:
\begin{equation}
(P(x\circ y)-P(x)P(y))(a)=0, \mlabel{eq:symmetaic}
\end{equation}
\begin{equation}
Q(x)[[r,r]]=0, \mlabel{eq:S-equation}
\end{equation}
where
$Q(x)=L_x\otimes \mathrm{id}\otimes \mathrm{id}+\mathrm{id}\otimes L_x\otimes \mathrm{id}+\mathrm{id}\otimes \mathrm{id}\otimes \mathrm{ad}_x$,
$P(x)=L_x\otimes \mathrm{id}+\mathrm{id}\otimes L_x$, and $[[r,r]]$ is defined by
\begin{align*}
[[r,r]]
&=r_{13}\circ r_{12}-r_{23}\circ r_{21} +[r_{23},r_{12}]-[r_{13},r_{21}]-[r_{13},r_{23}]\\
&=\sum_{i,j=1}^n
\bigl(
a_i \otimes b_i \circ a_j \otimes b_j
+ a_i \otimes a_j \otimes b_i \circ b_j
\bigr)
-
\sum_{i,j=1}^n
\bigl(
a_i \circ a_j \otimes b_i \otimes b_j
+ a_i \otimes a_j \otimes b_j \circ b_i
\bigr).
\end{align*}
\end{theorem}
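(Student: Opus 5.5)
The plan is to follow Bai's original argument for coboundary pre-Lie bialgebras. We take $\Delta_r$ as in Eq.~\eqref{eq:deltar} and check, in turn, the two pre-Lie bialgebra compatibilities and the pre-Lie coalgebra identity Eq.~\eqref{eq:preLieco}. Each is rewritten as a statement about $r=a+\Lambda$ with $a\in\wedge^2A$ and $\Lambda\in S^2(A)$. Throughout we write $r=\sum_i a_i\otimes b_i$ and use only the pre-Lie identity Eq.~\eqref{eq:preLie} together with the fact that $x\mapsto L_x$ is a representation of the sub-adjacent Lie algebra.

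The first compatibility, $\Delta_r(x\circ y-y\circ x)=\bigl(L_x\otimes\mathrm{id}+\mathrm{id}\otimes(L_x-R_x)\bigr)\Delta_r(y)-(x\leftrightarrow y)$, should hold automatically. Indeed $x\mapsto L_x\otimes\mathrm{id}+\mathrm{id}\otimes(L_x-R_x)$ is the tensor product of the representations $(A,L)$ and $(A,L-R)$ of the Lie algebra $\mathfrak g(A)$, and $L-R=\mathrm{ad}$. So $\Delta_r$ is a $1$-coboundary and the identity is the cocycle identity. For the second compatibility, the one involving $\tau$, we expand $\Delta_r(x\circ y)-\tau\Delta_r(x\circ y)$ and the right-hand side in terms of $a_i,b_i$ and cancel using Eq.~\eqref{eq:preLie}. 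We expect the remainder to depend only on the skew part $a$, and to equal a combination of $\bigl(P(x\circ y)-P(x)P(y)\bigr)(a)$, with $P(x)=L_x\otimes\mathrm{id}+\mathrm{id}\otimes L_x$. This gives Eq.~\eqref{eq:symmetaic}.

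Next we compute the pre-Lie coassociator of $\Delta_r$, namely $(\Delta_r\otimes\mathrm{id})\Delta_r(x)-(\mathrm{id}\otimes\Delta_r)\Delta_r(x)$ minus its $\tau\otimes\mathrm{id}$ image. We expand it into sums over $i,j$ and group the terms. The aim is to show that it equals $Q(x)[[r,r]]$ up to terms that vanish once Eq.~\eqref{eq:symmetaic} is imposed. The terms of $[[r,r]]$ (such as $r_{13}\circ r_{12}$ and $[r_{23},r_{12}]$) are obtained by collecting the products among the $a_i,a_j,b_i,b_j$, and $Q(x)$ appears because $x$ acts by $L_x$ on the first two legs and by $\mathrm{ad}_x$ on the third leg. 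With this in hand, both directions follow. If Eqs.~\eqref{eq:symmetaic} and \eqref{eq:S-equation} hold, every identity is satisfied. Conversely, the $\tau$-compatibility forces Eq.~\eqref{eq:symmetaic}, and then the coalgebra identity forces Eq.~\eqref{eq:S-equation}.

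The main obstacle is the bookkeeping in the coassociator. This includes tracking which leg each multiplication acts on and verifying that the leftover terms are exactly multiples of $\bigl(P(x\circ y)-P(x)P(y)\bigr)(a)$. To keep this manageable, I would first treat $r$ general, isolate the parts symmetric in the first two tensor legs (which cancel in the $\tau\otimes\mathrm{id}$ antisymmetrization), and only then separate the $a$ and $\Lambda$ contributions. Since the statement is quoted from \cite{Bai08}, in the paper it suffices to cite that proof.
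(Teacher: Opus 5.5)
Your proposal is correct and is essentially Bai's original argument, which is all the paper relies on here: it states the result with a citation and gives no proof of its own. Concretely, the $\tau$-compatibility reduces to $(P(x\circ y)-P(x)P(y))(r-\tau(r))=2(P(x\circ y)-P(x)P(y))(a)=0$, and the antisymmetrized coassociator works out to $Q(x)[[r,r]]+2\sum_i\bigl((P(x\circ a_i)-P(x)P(a_i))(a)\bigr)\otimes b_i$, with $[[r,r]]$ taken in its $r_{13}\circ r_{12}-\cdots$ form, so both directions follow as you describe.
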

We denote this pre-Lie bialgebra by $(A, \circ, \Delta_r)$.

In particular, the equation $ [[r,r]]=0 $ is called the \textbf{classical Yang-Baxter equation in the pre-Lie algebra $(A, \circ)$} or the \textbf{$\mathbb{S}$-equation in $(A,\circ)$}.

\begin{defn}\cite{WBLS24}
Let $(A,\circ)$ be a pre-Lie algebra. If $r \in A\otimes A$ satisfies $[[r,r]]=0$ and the skew-symmetric $a$ of $r$ satisfies
\begin{align*}
(L_x \otimes \mathrm{id} + \mathrm{id} \otimes \mathrm{ad}_x )a = 0, \quad \forall x\in A.
\end{align*}
Then the pre-Lie bialgebra $(A, \circ, \Delta_r)$ induced by $r$ is called a \textbf{quasi-triangular pre-Lie bialgebra}. Moreover, if $r$ is symmetric, then $(A, \circ, \Delta_r)$ is called a \textbf{triangular pre-Lie bialgebra}.
\end{defn}

Let $r\in A\otimes A$. Define $r_+,r_-: A^*\to\ A$ by
\begin{align*}
\langle r_+(\xi),\eta\rangle = \langle r, \xi \otimes \eta \rangle,
\qquad
\langle r_-(\xi),\eta\rangle = \langle \xi,r_+(\eta)\rangle = \langle r, \eta \otimes \xi \rangle,
\quad
\forall\,\xi,\eta\in A^* .
\end{align*}

\begin{defn}\cite{WBLS24}
A quasi-triangular pre-Lie bialgebra $(A, \circ, \Delta_r)$ is called \textbf{factorizable} if the linear map $I:=r_{+} - r_{-}$ is a linear isomorphism of vector spaces.
\end{defn}

\begin{defn}\cite{WBLS24}
Let $(A,\circ)$ be a pre-Lie algebra. A linear operator $B:A\to A$ is called a \textbf{Rota--Baxter operator of weight $\lambda$} if
\[
Bx \circ By
=
B\Big(Bx \circ y +x \circ By + \lambda x \circ y\Big),
\qquad \forall\,x,y\in A.
\]
Moreover, we call a pre-Lie algebra $(A,\circ)$ with a Rota--Baxter operator $B$ \textbf{Rota--Baxter pre-Lie algebra of weight $\lambda$} and denote it by $(A,\circ,B)$.
\end{defn}

Let $(A,\circ,B)$ be a Rota--Baxter pre-Lie algebra of weight $\lambda$. Then there is a new pre-Lie multiplication $\circ_B$ on $A$ defined by
\begin{equation}
x \circ_B y = Bx \circ y +x \circ By + \lambda x \circ y. \mlabel{eq:indub}
\end{equation}
The pre-Lie algebra $(A,\circ_B)$ is called the \textbf{descendent pre-Lie algebra}, and denoted by $A_B$. It is obvious that $B$ is a pre-Lie algebra homomorphism from $A_B$ to $A$.

\begin{prop}
Let $(A,\circ,B)$ be a Rota--Baxter pre-Lie algebra of weight $\lambda$ and $P:A\to A$ an averaging operator on $A$. If
$P\circ B = B\circ P$, then $P$ is also an averaging operator on the descendent pre-Lie algebra $(A,\circ_B)$.
\end{prop}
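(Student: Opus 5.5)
We verify the two identities of \eqref{eq:averop} for the product $\circ_B$ of \eqref{eq:indub} by direct expansion. The only inputs are the averaging identity for $P$ on $(A,\circ)$ and the commutation $PB=BP$. The Rota--Baxter identity is not needed at all; it is used only to know that $(A,\circ_B)$ is a pre-Lie algebra, which the paper already records for the descendent pre-Lie algebra $A_B$.

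Fix $x,y\in A$. First we expand the left-hand side:
\[
P(x)\circ_B P(y)=BP(x)\circ P(y)+P(x)\circ BP(y)+\lambda\,P(x)\circ P(y).
\]
Using $BP=PB$, we rewrite this as $P(Bx)\circ P(y)+P(x)\circ P(By)+\lambda P(x)\circ P(y)$. Each of the three terms is now of the form $P(u)\circ P(v)$, so the averaging identity applies to each one.

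Next we expand the middle expression:
\[
P\bigl(P(x)\circ_B y\bigr)=P\bigl(BP(x)\circ y\bigr)+P\bigl(P(x)\circ B(y)\bigr)+\lambda P\bigl(P(x)\circ y\bigr).
\]
Writing $BP(x)=P(Bx)$, the three summands become $P(P(Bx)\circ y)$, $P(P(x)\circ By)$ and $\lambda P(P(x)\circ y)$. By \eqref{eq:averop} these equal $P(Bx)\circ P(y)$, $P(x)\circ P(By)$ and $\lambda P(x)\circ P(y)$ respectively. This matches the expansion of $P(x)\circ_B P(y)$ term by term.

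The third expression is handled symmetrically:
\[
P\bigl(x\circ_B P(y)\bigr)=P\bigl(Bx\circ P(y)\bigr)+P\bigl(x\circ BP(y)\bigr)+\lambda P\bigl(x\circ P(y)\bigr).
\]
Writing $BP(y)=P(By)$ and applying $P(u\circ P(v))=P(u)\circ P(v)$ to each summand, we again obtain $P(Bx)\circ P(y)+P(x)\circ P(By)+\lambda P(x)\circ P(y)$.

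There is no real obstacle. The only point needing care is to use $PB=BP$ to move $B$ inside $P$ before applying the averaging identity, so that every term has exactly the shape required by \eqref{eq:averop}.
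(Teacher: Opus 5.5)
Your proposal is correct and takes essentially the same route as the paper: expand $\circ_B$, use $PB=BP$ to move $B$ inside $P$, and apply the averaging identity term by term. The only difference is that you write out the second identity $P(x)\circ_B P(y)=P\bigl(x\circ_B P(y)\bigr)$ explicitly, whereas the paper leaves it as ``similarly''.
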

\begin{proof}
For all $x, y \in A$, we have
\begin{align*}
P(x)\circ_B P(y) - P\bigl(P(x)\circ_B y\bigr)
&\overset{\eqref{eq:indub}}{=} B(P(x))\circ P(y) + P(x)\circ B(P(y)) + \lambda\, P(x)\circ P(y) \\
&\quad - P\bigl(B(P(x))\circ y + P(x)\circ B(y) + \lambda\, P(x)\circ y\bigr) \\
&= P(B(x))\circ P(y) + P(x)\circ P(B(y)) + \lambda\, P(x)\circ P(y) \\
&\quad - P\bigl(P(B(x))\circ y\bigr) - P\bigl(P(x)\circ B(y)\bigr) - \lambda\, P\bigl(P(x)\circ y\bigr) \\
&\overset{\eqref{eq:averop}}{=} 0.
\end{align*}
Similarly, we get $P(x)\circ_B P(y) = P\bigl(x \circ_B P(y)\bigr)$. Therefore, $P$ is an averaging operator on the descendent pre-Lie algebra $(A,\circ_B)$.
\end{proof}

\begin{defn}\cite{WBLS24}
Let $(A,\circ,B)$ be a Rota-Baxter pre-Lie algebra of weight $\lambda$ and $\bigl((A,\circ),\omega\bigr)$ a quadratic pre-Lie algebra.
Then the quadruple $(A,\circ,B,\omega)$ is called a \textbf{quadratic Rota-Baxter pre-Lie algebra of weight $\lambda$} if the following compatibility condition holds:
\begin{align*}
\omega(Bx,y)+\omega(x,By)+\lambda \omega(x,y)=0, \qquad \forall\, x,y \in A .
\end{align*}
\end{defn}

It is known that quadratic Rota-Baxter pre-Lie algebras of weight 1 are one-to-one correspondence with factorizable pre-Lie bialgebras.

\begin{theorem}\cite{WBLS24}\label{theorem:frac-preliebi}
Let $(A, \circ, \Delta_r)$ be a factorization pre-Lie bialgebra with $I:=r_{+} - r_{-}$. Then $(A, \circ, B_I, \omega_I)$ is a quadratic Rota-Baxter pre-Lie algebra of weight $\lambda$, where the linear map $B_I: A \to A$ and $\omega_I\in\wedge^2 A^*$
are defined respectively by
\begin{align*}
B_I &= \lambda r_-\circ I^{-1},\\
\omega_I(x,y) &= \langle I^{-1}x,\,y\rangle,\qquad \forall\,x,y\in A.
\end{align*}

Conversely, let $(A, \circ,B,\omega)$ be a quadratic Rota-Baxter pre-Lie algebra of weight $\lambda$ $(\lambda\neq 0)$, and $J_\omega:A^*\to A$ the induced linear isomorphism given by $\langle J_\omega^{-1}x,\,y\rangle:=\omega(x,y).$ Then $r^{B, \omega} \in A\otimes A$ defined by
\begin{align*}
r_+^{B, \omega}:=\frac{1}{\lambda}(B+\lambda\,\mathrm{id})\circ J_\omega: A^*\to A,
\qquad
r_+^{B, \omega}(\xi)=r(\xi,\cdot),\qquad \forall\,\xi \in A^*
\end{align*}
satisfies  the classical Yang-Baxter equation in the pre-Lie algebra $(A, \circ)$ and thus gives rise to a factorizable pre-Lie bialgebra $(A, \circ, \Delta_{r^{B, \omega}})$ with a comultiplication $\Delta_{r^{B, \omega}} = (L_x \otimes \mathrm{id} + \mathrm{id} \otimes \mathrm{ad}_x )r^{B, \omega}$. Moreover, for all $x, y \in A$,
\begin{equation}
J_\omega^{-1}x \cdot_r J_\omega^{-1}y = \frac{1}{\lambda}J_\omega^{-1}(x \circ_B y), \mlabel{eq:rb}
\end{equation}
where $\cdot_r$ is defined by:
\begin{align*}
\xi \cdot_r \eta = \mathrm{ad}^*_{r_+(\xi)}\eta - R^*_{r_-(\eta)}\xi, \qquad  \forall \xi, \eta \in A^*.
\end{align*}
\end{theorem}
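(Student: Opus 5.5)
The statement to prove is Theorem~\ref{theorem:frac-preliebi}, quoted from \cite{WBLS24}. I would follow the standard factorizable-bialgebra argument in two directions.

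\textbf{Forward direction.} Start from a factorizable pre-Lie bialgebra $(A,\circ,\Delta_r)$ with $I=r_+-r_-$ invertible.

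1. Since $(A,\circ,\Delta_r)$ is quasi-triangular, the skew part $a$ of $r$ is invariant, i.e. $(L_x\otimes\mathrm{id}+\mathrm{id}\otimes\mathrm{ad}_x)a=0$. Because $I$ is, up to a factor $2$, the map $a_+$ attached to $a$, this invariance says that $I$ intertwines the coregular action on $A^*$ with the adjoint-type action on $A$. Equivalently, $\omega_I(x,y)=\langle I^{-1}x,y\rangle$ satisfies the invariance condition Eq.~\eqref{eq:oinva}.

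2. Skew-symmetry and nondegeneracy of $\omega_I$ follow from $I^*=-I$ and the invertibility of $I$.

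3. Next, rewrite $[[r,r]]=0$ as two operator identities. One states that $r_+$ is a pre-Lie homomorphism $(A^*,\cdot_r)\to(A,\circ)$; the other states the same for $r_-$. These identities are obtained by pairing $[[r,r]]$ with $\xi\otimes\eta\otimes\zeta$ and using the invariance of $a$ to move between $r_+$ and $r_-$.

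4. Set $x=I\xi$ and $y=I\eta$. Then $r_-\xi=\frac1\lambda B_Ix$ and $r_+\xi=\frac1\lambda(B_I+\lambda\,\mathrm{id})x$, so subtracting the two homomorphism identities gives the Rota--Baxter identity of weight $\lambda$ for $B_I$.

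5. The compatibility condition $\omega_I(B_Ix,y)+\omega_I(x,B_Iy)+\lambda\omega_I(x,y)=0$ reduces to $\langle r_-I^{-1}x,I^{-1}y\rangle+\langle I^{-1}x,r_-I^{-1}y\rangle+\langle I^{-1}x,Iy'\rangle$-type terms. These collapse by $r_-=r_+^*$ and $I=r_+-r_-$.

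\textbf{Converse direction.} Start from a quadratic Rota--Baxter pre-Lie algebra $(A,\circ,B,\omega)$ of weight $\lambda\neq0$.

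1. Define $r_+=\frac1\lambda(B+\lambda\,\mathrm{id})J_\omega$. The compatibility condition $\omega(Bx,y)+\omega(x,By)+\lambda\omega(x,y)=0$ shows that $r_-=r_+^*=-\frac1\lambda BJ_\omega$, so $I=r_+-r_-=J_\omega$ is invertible.

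2. Invariance of $\omega$ translates into invariance of the skew part $a$, since $a_+=\tfrac12 J_\omega$.

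3. Show that $J_\omega^{-1}$ transports $\circ_B$ to $\cdot_r$, which is Eq.~\eqref{eq:rb}. To do so, expand $\mathrm{ad}^*_{r_+(\xi)}\eta-R^*_{r_-(\eta)}\xi$ with $\xi=J_\omega^{-1}x$ and $\eta=J_\omega^{-1}y$. Then use invariance of $\omega$ to pull $\circ$ through the form, and use the compatibility condition to trade $B$ on one slot for $-B-\lambda$ on the other slot.

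4. Combine Eq.~\eqref{eq:rb} with the fact that $B$ and $B+\lambda\,\mathrm{id}$ are homomorphisms $A_B\to A$ (the Rota--Baxter identity). This shows that $r_\pm$ are homomorphisms from $(A^*,\cdot_r)$, which by the equivalence in step 3 of the forward direction is exactly $[[r,r]]=0$.

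5. Invoke Theorem~\ref{theorem:preliebi}. Condition Eq.~\eqref{eq:symmetaic} holds because $a$ is invariant, so $(A,\circ,\Delta_r)$ is a factorizable quasi-triangular pre-Lie bialgebra.

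\textbf{Main obstacle.} The hardest step is the equivalence between $[[r,r]]=0$ and the homomorphism properties of $r_\pm$ with respect to $\cdot_r$, when $r$ is not symmetric. I would first verify it in the symmetric case, where it is the known $\mathcal O$-operator characterization of \cite{Bai08}. I would then handle the correction terms coming from $a$ using the invariance identity for $a$, term by term in the expanded form of $[[r,r]]$ given in Theorem~\ref{theorem:preliebi}.
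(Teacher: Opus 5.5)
Your route is the standard factorizable-bialgebra argument. The paper gives no proof of this statement; it only cites \cite{WBLS24}, where the argument likewise rests on $r_+$ and $r_-$ being pre-Lie homomorphisms $(A^*,\cdot_r)\to(A,\circ)$. Your plan works, but one computation in the converse is wrong, and as written it breaks converse steps 1 and 3.

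\textbf{The sign of $r_-$ in converse step 1.} For $\xi=J_\omega^{-1}x$ and $\eta=J_\omega^{-1}y$,
\[
\langle r_-(\xi),\eta\rangle=\langle \xi,r_+(\eta)\rangle=\tfrac1\lambda\,\omega\bigl(x,By+\lambda y\bigr)=-\tfrac1\lambda\,\omega(Bx,y)=\tfrac1\lambda\,\omega(y,Bx)=\tfrac1\lambda\langle \eta,Bx\rangle .
\]
So $r_-=\tfrac1\lambda BJ_\omega$, not $-\tfrac1\lambda BJ_\omega$.
\begin{itemize}
\item With your sign, $r_+-r_-=\tfrac1\lambda(2B+\lambda\,\mathrm{id})J_\omega\neq J_\omega$, which contradicts your own conclusion $I=J_\omega$.
\item Eq.~\eqref{eq:rb} would also fail. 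Invariance of $\omega$ gives $\xi\cdot_r\eta=J_\omega^{-1}\bigl(r_+(\xi)\circ y+x\circ r_-(\eta)\bigr)$. This equals $\tfrac1\lambda J_\omega^{-1}(x\circ_B y)$ only when $r_-(\eta)=+\tfrac1\lambda By$.
\item The corrected sign agrees with your forward step 4, where $r_-=\tfrac1\lambda B_I I$.
\end{itemize}
With the corrected sign you get $I=J_\omega$ and $B_I=B$, and the rest of your converse goes through.

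\textbf{The step you flag as the main obstacle is easier than you expect.} Write $[[r,r]](\xi,\eta,\zeta)$ for the pairing of the expanded form of $[[r,r]]$ with $\xi\otimes\eta\otimes\zeta$. For every $r$, with no invariance assumption,
\[
[[r,r]](\xi,\eta,\zeta)=-\bigl\langle \xi,\ r_-(\eta)\circ r_-(\zeta)-r_-(\eta\cdot_r\zeta)\bigr\rangle .
\]
Hence $[[r,r]]=0$ if and only if $r_-$ is multiplicative, and this is all the converse needs.

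Invariance of $a$ is used only in the forward direction. There it gives
\[
\bigl\langle \zeta,\ r_+(\xi)\circ r_+(\eta)-r_+(\xi\cdot_r\eta)\bigr\rangle=[[r,r]](\xi,\eta,\zeta)+[[r,r]](\eta,\zeta,\xi).
\]
A trilinear form $T$ with $T(\xi,\eta,\zeta)=-T(\eta,\zeta,\xi)$ vanishes, since applying the cyclic shift three times gives $T=-T$. So $r_+$ is multiplicative exactly when $[[r,r]]=0$.

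\textbf{A caveat on the paper's notation.} The compact and expanded expressions for $[[r,r]]$ displayed in Theorem~\ref{theorem:preliebi} are not equal for non-symmetric $r$. When $a$ is invariant, however, they vanish simultaneously. Working with the expanded form, as you plan, therefore loses nothing.
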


Now we introduce the notion of an averaging operator on a quadratic Rota-Baxter pre-Lie algebra, and show that an averaging operator on a quadratic Rota-Baxter pre-Lie algebra naturally induces an averaging pre-Lie bialgebra.
To avoid an ambiguity, in what follows we denote by $P^{*}$ the dual map defined by $\langle P^{*}f, x\rangle = \langle f, Px\rangle$, and by $P^{*,\omega}$ the adjoint map defined by $\omega(Pa, b) = \omega(a, P^{*,\omega} b)$.

\begin{defn}
Let $(A, \circ, B, \omega)$ be a quadratic Rota-Baxter pre-Lie algebra of weight $\lambda$. A linear map $P:A\to A$ is called an \textbf{averaging operator} on $(A, \circ, B, \omega)$, if $B\circ P^{*,\omega} =-\,P\circ B$ and $\bigl((A, \circ), P\bigr)$ is an averaging pre-Lie algebra.
\end{defn}

\begin{remark}
If $(A, \circ, B, \omega)$ be a quadratic Rota-Baxter pre-Lie algebra of weight $\lambda$, then the compatibility condition $B\circ P^{*,\omega}=-P\circ B$ implies the identity $\lambda\,(P+P^{*,\omega})=0.$ Consequently, if $\lambda\neq 0$, we get $P = -P^{*,\omega}$.
\end{remark}

\begin{theorem}
Let $(A, \circ,B,\omega)$ be a quadratic Rota-Baxter pre-Lie algebra of weight $\lambda$ and $P:A\to A$ be an averaging operator on $(A, \circ,B,\omega)$. Then $-P^{*,\omega}$ is an averaging operator on the descendent pre-Lie algebra $A_B$.
\end{theorem}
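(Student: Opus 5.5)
The plan is to reduce everything to identities between $P$, $Q:=-P^{*,\omega}$ and $B$, and then expand $\circ_B$. Write $\circ_B$ as in Eq.~\eqref{eq:indub}. The defining condition $B\circ P^{*,\omega}=-P\circ B$ reads
\[
BQ=PB .
\]
We must show
\[
Q(x)\circ_B Q(y)=Q\bigl(Q(x)\circ_B y\bigr)=Q\bigl(x\circ_B Q(y)\bigr)\quad\text{for all } x,y\in A .
\]

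\textbf{Case $\lambda\neq 0$.} By the preceding Remark, $P=-P^{*,\omega}=Q$. The relation $BQ=PB$ then becomes $BP=PB$, so $P$ commutes with $B$. The earlier Proposition (an averaging operator commuting with a Rota--Baxter operator is averaging on the descendent algebra) then gives the result immediately. If one wants an independent check, take the $\omega$-adjoint of $BQ=PB$ using $B^{*,\omega}=-B-\lambda\,\mathrm{id}$; this gives $\lambda P=\lambda Q$ again.

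\textbf{Case $\lambda=0$.} Here $x\circ_B y=Bx\circ y+x\circ By$, and we only know $BQ=PB$. The key step is to derive mixed averaging identities between $P$ and $Q$ from invariance Eq.~\eqref{eq:oinva} together with the averaging identities Eq.~\eqref{eq:averop}. The model computation uses $\omega(Qu,v)=-\omega(u,Pv)$:
\begin{align*}
\omega\bigl(Q(P a\circ b),z\bigr)&=-\omega(Pa\circ b,Pz)=\omega\bigl(b,Pa\circ Pz-Pz\circ Pa\bigr)\\
&=\omega\bigl(b,P(Pa\circ z-z\circ Pa)\bigr)=-\omega\bigl(Qb,Pa\circ z-z\circ Pa\bigr)=\omega(Pa\circ Qb,z).
\end{align*}
The third equality uses both forms of Eq.~\eqref{eq:averop}: $Pa\circ Pz=P(Pa\circ z)$ and $Pz\circ Pa=P(z\circ Pa)$. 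By nondegeneracy this yields $Q(Pa\circ b)=Pa\circ Qb$.

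In the same way I would derive companion identities, using the skew-symmetric consequence of Eq.~\eqref{eq:oinva} that was already used in Theorem~\ref{theorem:omegasharp} for right multiplications. The companions needed are of the form
\[
Q(a\circ Pb)=Qa\circ Pb,\qquad Q(Qa\circ b)=Qa\circ Qb=Q(a\circ Qb).
\]
That is, $Q$ is itself averaging on $(A,\circ)$ and intertwines with $P$ on either side.

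With these in hand, expand both sides of the target identity:
\[
Q(x)\circ_B Q(y)=PBx\circ Qy+Qx\circ PBy,
\qquad
Q\bigl(Q(x)\circ_B y\bigr)=Q\bigl(PBx\circ y\bigr)+Q\bigl(Qx\circ By\bigr).
\]
The mixed identities match these term by term. The same argument handles $Q(x\circ_B Q(y))$.

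\textbf{Main obstacle.} The hard part is the list of mixed identities, and in particular whether $Q$ alone is averaging on $(A,\circ)$. That property is needed for terms such as $Q(Qx\circ By)$, which must be rewritten as $Qx\circ QBy$. Unlike the $P$-terms, $QBy$ is not directly converted by $BQ=PB$, so this term is where care is needed. I would first try to obtain the $Q$-averaging property by dualizing Eq.~\eqref{eq:averop} through $\omega$ exactly as in the displayed computation. If a term fails to match, I would apply $B$ and use $BQ=PB$ together with the Rota--Baxter identity to move the discrepancy into a form where nondegeneracy of $\omega$ kills it.
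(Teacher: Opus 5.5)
\textbf{Case $\lambda\neq 0$.} This part of your proposal is correct and complete. The Remark gives $-P^{*,\omega}=P$, so the hypothesis becomes $BP=PB$, and the earlier Proposition finishes the argument. This is cleaner than the paper's route. The paper's second $\omega$-computation silently uses
$P^{*,\omega}\bigl(P^{*,\omega}x\circ z-z\circ P^{*,\omega}x\bigr)=P(x)\circ P(z)-P(z)\circ P(x)$,
which amounts to $P^{*,\omega}=-P$. So the paper's proof also only covers $\lambda\neq0$.

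\textbf{Case $\lambda=0$: a genuine gap that cannot be filled.} Two of your identities are right:
$Q(Pa\circ b)=Pa\circ Qb$ and $Q(a\circ Pb)=Qa\circ Pb$.
Dualizing the averaging identities through $\omega$ shows exactly that $Q$ commutes with $L_{P(a)}$ and $R_{P(a)}$. Nothing, however, yields commutation with $L_{Q(a)}$, and the claimed averaging property of $Q$ on $(A,\circ)$ is false in general.

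The term you flagged in fact comes out with the wrong sign. Use $\omega(u\circ v,w)=\omega(u,w\circ v)$, which is derived in the proof of Theorem~\ref{theorem:omegasharp}, together with $\omega(Qu,v)=-\omega(u,Pv)$:
\begin{align*}
\omega\bigl(Q(Qx\circ By),z\bigr)&=-\omega(Qx,Pz\circ By)=\omega\bigl(x,P(Pz\circ By)\bigr)=\omega(x,Pz\circ PBy),\\
\omega(Qx\circ BQy,z)&=\omega(Qx,z\circ PBy)=-\omega\bigl(x,P(z\circ PBy)\bigr)=-\omega(x,Pz\circ PBy).
\end{align*}
Hence $Q(Qx\circ By)=-\,Qx\circ BQy$ always. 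So for $\lambda=0$, the identity $Qx\circ_B Qy=Q(Qx\circ_B y)$ is equivalent to $Qx\circ BQy=0$, i.e.\ to $P(A)\circ PB(A)=0$. The hypotheses do not force this. Your fallback of applying $B$ only shows $B(Qx\circ BQy)=0$, and nondegeneracy of $\omega$ cannot remove an element of $\ker B$.

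\textbf{Counterexample.} Set up the data as follows.
\begin{enumerate}
\item Let $\mathfrak h$ have basis $e_1,e_2,e_3$ whose only nonzero product is $e_3\cdot e_2=e_2$. This is a pre-Lie algebra, and $p=\mathrm{diag}(-1,1,1)$ is averaging on it.
\item Let $A=\mathfrak h\ltimes_{L^*-R^*,-R^*}\mathfrak h^*$ with $\omega(x+\xi,y+\eta)=\langle\xi,y\rangle-\langle\eta,x\rangle$. This is the quadratic double of the pre-Lie bialgebra $(\mathfrak h,\cdot,0)$.
\item Put $B(x+\xi)=T\xi$, where $Te^1=e_2$, $Te^2=e_1$, $Te^3=0$, and put $P(x+\xi)=p(x)$.
\end{enumerate}
All hypotheses hold:
\begin{enumerate}
\item $T$ is symmetric, so $B$ is $\omega$-skew.
\item $T(\mathfrak h^*)\cdot T(\mathfrak h^*)=0$, and the coregular actions of $T(\mathfrak h^*)$ land in $\mathbf{k}e^3=\ker T$. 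Hence $B$ is Rota--Baxter of weight $0$.
\item $P$ is averaging, with $P^{*,\omega}(y+\eta)=p^{*}\eta$.
\item $BP^{*,\omega}=-PB$, because $Tp^{*}=-pT$.
\end{enumerate}
Now take $Q=-P^{*,\omega}$. Then $Q(e^2)=-e^2$ and $Q(e^1)=e^1$. Also $(-e^2)\circ_B e^1=(-e^2)\circ e_2=-e^3$, since the other term $(-e_1)\circ e^1$ vanishes. Therefore
\[
Q(e^2)\circ_B Q(e^1)=-e^3\neq e^3=Q\bigl(Q(e^2)\circ_B e^1\bigr).
\]
Likewise $Q\bigl(Q(e^2)\circ e_2\bigr)=e^3\neq 0=Q(e^2)\circ Q(e_2)$, so $Q$ is not averaging on $(A,\circ)$ either.

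The statement is therefore false for $\lambda=0$; only your $\lambda\neq0$ argument survives.
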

\begin{proof}
Since $(A, \circ,B,\omega)$ be a quadratic Rota-Baxter pre-Lie algebra of weight $\lambda$, then the adjoint map $B^{*} = -B - \lambda\,\mathrm{id}$
is also a Rota-Baxter operator of weight $\lambda$. For all $x, y \in A$, we have
\begin{align*}
(-P^{*,\omega}x)\circ_{B}(-P^{*,\omega}y)
&= (P^{*,\omega}x)\circ_{B}(P^{*,\omega}y) \\
&\overset{\eqref{eq:indub}}{=} B(P^{*,\omega}x)\circ P^{*,\omega}y
   + P^{*,\omega}x\circ B(P^{*,\omega}y)
   + \lambda\, P^{*,\omega}x\circ P^{*,\omega}y \\
&= B(P^{*,\omega}x)\circ P^{*,\omega}y
   - P^{*,\omega}x\circ B^{*}(P^{*,\omega}y).
\end{align*}
Recall that if $\lambda \neq 0$, then $P^{*,\omega} = -P$. It means that for any weight $\lambda$, the map $B^{*}$ also satisfies the condition
\[
P \circ B^{*} = -\, B^{*} \circ P^{*,\omega}.
\]
Consider $\forall z \in A$. Then
\begin{align*}
\omega\bigl(B(P^{*,\omega}x)\circ P^{*,\omega}y,\, z\bigr)
&\overset{\eqref{eq:oinva}}{=} -\,\omega\bigl(P^{*,\omega}y,\; B(P^{*,\omega}x)\circ z - z\circ B(P^{*,\omega}x)\bigr) \\
&= -\,\omega\bigl(y,\; P\bigl(B(P^{*,\omega}x)\circ z - z\circ B(P^{*,\omega}x)\bigr)\bigr) \\
&= \omega\bigl(y,\; P\bigl(P(Bx)\circ z - z\circ P(Bx)\bigr)\bigr) \\
&\overset{\eqref{eq:averop}}{=} \omega\bigl(y,\; P(Bx)\circ P(z) - P(z)\circ P(Bx)\bigr) \\
&\overset{\eqref{eq:oinva}}{=} -\,\omega\bigl(P(Bx)\circ y,\, P(z)\bigr) \\
&= -\,\omega\bigl(P^{*,\omega}(P(Bx)\circ y),\, z\bigr) \\
&= \omega\bigl(-P^{*,\omega}(P(Bx)\circ y),\, z\bigr) \\
&= \omega\bigl(P^{*,\omega}(B(P^{*}x)\circ y),\, z\bigr),
\end{align*}
and
\begin{align*}
\omega\bigl(P^{*,\omega}x \circ B^{*}(P^{*,\omega}y),\, z\bigr)
&\overset{\eqref{eq:oinva}}{=} -\,\omega\bigl(B^{*}(P^{*,\omega}y),\; P^{*,\omega}x \circ z - z \circ P^{*,\omega}x\bigr) \\
&= \omega\bigl(P(B^{*}y),\; P^{*,\omega}x \circ z - z \circ P^{*,\omega}x\bigr) \\
&= \omega\bigl(B^{*}y,\; P^{*,\omega}\bigl(P^{*,\omega}x \circ z - z \circ P^{*,\omega}x\bigr)\bigr) \\
&= \omega\bigl(B^{*}y,\; P(x)\circ P(z) - P(z)\circ P(x)\bigr) \\
&= -\,\omega\bigl(P(x)\circ B^{*}y,\, P(z)\bigr) \\
&= -\,\omega\bigl(P^{*,\omega}(P(x)\circ B^{*}y),\, z\bigr) \\
&= \omega\bigl(P^{*,\omega}\bigl(P(x)\circ By + \lambda\, P(x)\circ y\bigr),\, z\bigr)\\
&= -\,\omega\bigl(P^{*,\omega}\bigl(P^{*,\omega}(x)\circ By + \lambda\, P^{*,\omega}(x)\circ y\bigr),\, z\bigr).
\end{align*}
Therefore,
\begin{align*}
(-P^{*,\omega}x)\circ_{B}(-P^{*,\omega}y)
&= B(P^{*,\omega}x)\circ P^{*,\omega}y - P^{*,\omega}x\circ B^{*}(P^{*,\omega}y) \\
&= P^{*,\omega}\bigl(B(P^{*,\omega}x)\circ y\bigr)
   + P^{*,\omega}\bigl(P^{*,\omega}x\circ B y + \lambda\, P^{*,\omega}x\circ y\bigr) \\
&= -\,P^{*,\omega}\Bigl(B(-P^{*,\omega}x)\circ y
   - P^{*,\omega}x\circ B y - \lambda\, P^{*,\omega}x\circ y\Bigr) \\
&\overset{\eqref{eq:indub}}{=} -\,P^{*,\omega}\bigl((-P^{*,\omega}x)\circ_{B} y\bigr).
\end{align*}
Similarly, $(-P^{*,\omega}x)\circ_{B}(-P^{*,\omega}y) = -\,P^{*,\omega}\bigl(x \circ_{B} (-P^{*,\omega}y)\bigr)$. Thus, $-P^{*,\omega}$ is an averaging operator on the descendent pre-Lie algebra $A_B$.
\end{proof}

\begin{theorem}\label{theorem:avequtobi}
Let $(A, \circ,B,\omega)$ be a quadratic Rota-Baxter pre-Lie algebra of weight $\lambda$ and $J_\omega:A^*\to A$ the induced linear isomorphism given by $\langle J_\omega^{-1}x,\,y\rangle:=\omega(x,y)$. Let $P:A\to A$ be an averaging operator on $(A, \circ,B,\omega)$. Then $(A, \circ, \Delta_{r^{B, \omega}}, P, -P)$ is an averaging pre-lie bialgebra.
\end{theorem}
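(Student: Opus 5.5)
Throughout I assume $\lambda\neq 0$, as Theorem~\ref{theorem:frac-preliebi} requires for $r:=r^{B,\omega}$ to exist; then the Remark gives $P^{*,\omega}=-P$. We must check the five items of Definition~\ref{def:apLbi} for $(A,\circ,\Delta_r,P,S)$ with $S=-P$. Items (1) and (3) are immediate: $((A,\circ),P)$ is averaging by hypothesis, and $(A,\circ,\Delta_r)$ is a pre-Lie bialgebra by Theorem~\ref{theorem:frac-preliebi}. The work is in items (2), (4) and (5).

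\emph{Item (4).} This is $S$-admissibility, i.e. Eqs.~\eqref{eq:regu1}--\eqref{eq:regu2}. Since every identity there has either zero or two factors of $S$, replacing $S=-P$ by $P$ changes them only by a global sign. After that replacement they reduce to the averaging identity $P(x)\circ P(y)=P(P(x)\circ y)=P(x\circ P(y))$, so item (4) is immediate.

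\emph{Items (2) and (5).} Here I would transport everything to $A$ through $J_\omega$. By definition $r_+=\frac1\lambda(B+\lambda\,\mathrm{id})J_\omega$. Skew-symmetry of $\omega$ and the quadratic Rota--Baxter condition (which gives $B^{*,\omega}=-B-\lambda\,\mathrm{id}$) should yield $r_-=-\frac1\lambda BJ_\omega$. Next I compute the dual product $\circ_{A^*}$ of $\Delta_r$. From $\Delta_r(x)=(L_x\otimes\mathrm{id}+\mathrm{id}\otimes(L_x-R_x))r$ one gets $\xi\circ_{A^*}\eta=\cdot_r$, up to the sign conventions of $L^*,R^*$. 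Then Eq.~\eqref{eq:rb} says that
\[
J_\omega:(A^*,\circ_{A^*})\to (A,\tfrac1\lambda\circ_B)
\]
is an isomorphism of pre-Lie algebras. Condition $\omega(Px,y)=-\omega(x,Py)$ gives $J_\omega^{-1}P=-P^*J_\omega^{-1}$, so $S^*=-P^*$ corresponds under $J_\omega$ to $P$ itself. The statements about $\Delta_r$ and $S$ are the duals of the statements that
\begin{enumerate}
\item[(a)] $S^*$ is an averaging operator on $(A^*,\circ_{A^*})$, and
\item[(b)] $((A^*,\circ_{A^*}),S^*)$ is $P^*$-admissible.
\end{enumerate}
After transport, (a) becomes: $P$ is averaging on the descendent algebra $A_B$. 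This follows from the Proposition on descendent algebras once $PB=BP$ is known. That in turn follows from $BP^{*,\omega}=-PB$ and $P^{*,\omega}=-P$. Equivalently one can invoke the preceding Theorem, since $-P^{*,\omega}=P$. Item (2) therefore holds, using $\Delta_r$ being a pre-Lie coalgebra from item (3).

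After transport, (b) becomes: in $(A,\circ_B)$, $P$ is $P$-admissible (with the sign fixed by $P^*\leftrightarrow -P$). This is again a formal consequence of $P$ being averaging on $A_B$, exactly as in item (4).

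\emph{Main obstacle.} The delicate point is the bookkeeping of signs and transposes. First, one must verify $\Delta_r^*=\cdot_r$ with the paper's sign conventions. Second, one must check that $S^*=-P^*$ and $P^*$ correspond under $J_\omega$ to exactly $\pm P$ with signs making Eqs.~\eqref{eq:aplb1}--\eqref{eq:aplb2} match the averaging identities of $P$ on $A_B$, rather than mixed-sign variants. I would first write out \eqref{eq:aplb1} paired against $\xi\otimes\eta$. This turns it into an identity in $\circ_{A^*}$ involving $P^*$ and $S^*$, which I would then push through $J_\omega$ and compare term by term. If a sign mismatch appears, I would fall back on the invariance Eq.~\eqref{eq:oinva} to verify \eqref{eq:aplb1}--\eqref{eq:aplb2} directly from $\Delta_r$, using $PB=BP$ and $P^{*,\omega}=-P$.
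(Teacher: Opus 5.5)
\textbf{Gap: items (4) and (5) are not consequences of the averaging identity.} Your items (1)--(3) are fine. Item (2) is essentially the paper's argument: identify $\Delta_{r^{B,\omega}}^*$ with $\cdot_r$, use Eq.~\eqref{eq:rb} and $J_\omega^{-1}P=-P^*J_\omega^{-1}$ to carry $-P^*$ to $P=-P^{*,\omega}$ on $A_B$, and use that $P$ is averaging there because $PB=BP$. The paper's proof stops at that point and never checks items (4)--(5).

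The error is in your item (4), and it carries over to item (5). You claim every term in Eqs.~\eqref{eq:regu1}--\eqref{eq:regu2} has zero or two factors of $S$. That is false. In $P(x)\circ S(y)=S(P(x)\circ y)=S(x\circ S(y))$ the terms have one, one and two factors of $S$. With $S=-P$ the identity reads $-P(x)\circ P(y)=-P(P(x)\circ y)=P(x\circ P(y))$. Together with \eqref{eq:averop}, this is equivalent to $P(x)\circ P(y)=0$, and Eq.~\eqref{eq:regu2} behaves the same way. Your sign-cancellation reasoning is valid for the coalgebra condition in item (2), where every term carries two factors of $S$. It fails for the admissibility conditions.

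Likewise, item (2) gives $(P\otimes P)\Delta_r(x)=(P\otimes\mathrm{id})\Delta_r(P(x))=(\mathrm{id}\otimes P)\Delta_r(P(x))$. Then Eqs.~\eqref{eq:aplb1}--\eqref{eq:aplb2} with $S=-P$ force all three expressions to vanish. Transported through $J_\omega$, the condition is $(-P)$-admissibility of $((A,\circ_B),P)$, not $P$-admissibility, and it amounts to $P(x)\circ_BP(y)=0$. So the sign mismatch you kept as a fallback does occur, in both items.

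\textbf{The missing step.} The hypotheses force $P(A)\circ P(A)=0$. Since $\lambda\neq 0$ gives $\omega(Pa,b)=-\omega(a,Pb)$, compute $\omega(P(x)\circ P(y),z)$ in two ways using \eqref{eq:averop} and \eqref{eq:oinva}, with $[a,b]=a\circ b-b\circ a$:
\begin{align*}
\omega\bigl(P(P(x)\circ y),z\bigr)&=-\omega\bigl(P(x)\circ y,P(z)\bigr)=\omega\bigl(y,[P(x),P(z)]\bigr),\\
\omega\bigl(P(x\circ P(y)),z\bigr)&=-\omega\bigl(x\circ P(y),P(z)\bigr)=\omega\bigl(P(y),[x,P(z)]\bigr)\\
&=-\omega\bigl(y,P[x,P(z)]\bigr)=-\omega\bigl(y,[P(x),P(z)]\bigr).
\end{align*}
Both left-hand sides equal $\omega(P(x)\circ P(y),z)$. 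Nondegeneracy then gives $[P(x),P(z)]=0$, and hence $P(x)\circ P(y)=0$ for all $x,y$. So every term of Eqs.~\eqref{eq:regu1}--\eqref{eq:regu2} is $0$, and item (4) holds.

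Next, $PB=BP$ gives $P(x)\circ_BP(y)=P(Bx)\circ P(y)+P(x)\circ P(By)+\lambda\,P(x)\circ P(y)=0$. Since $P$ is averaging on $A_B$, also $P(P(x)\circ_By)=P(x\circ_BP(y))=0$. Pair each term of Eqs.~\eqref{eq:aplb1}--\eqref{eq:aplb2} with $J_\omega^{-1}u\otimes J_\omega^{-1}v$ and use Eq.~\eqref{eq:rb}. Each term becomes, up to sign and a factor $\frac1\lambda$, one of these three vanishing expressions, which settles item (5).

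A minor slip: in fact $r_-=\frac1\lambda BJ_\omega$, so that $r_+-r_-=J_\omega$. This does not affect your argument, since you only use Eq.~\eqref{eq:rb}.
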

\begin{proof}
By Theorem~\ref{theorem:frac-preliebi}, $(A, \circ, \Delta_{r^{B, \omega}})$ is a pre-Lie bialgebra. Since $P$ is an averaging operator on the pre-Lie algebra $(A, \circ)$, according to Definition ~\ref{def:apLbi}, we only need to show that $\bigl((A, \Delta_{r^{B, \omega}}),-P\bigr)$ is an averaging pre-Lie coalgebra. In other words, we need to show that $\bigl((A^*, \Delta_{r^{B, \omega}}^*:=\cdot_r),-P^*\bigr)$ is an averaging pre-Lie algebra.
Obviously,
\begin{equation}
P^{*}\circ J_{\omega}^{-1}=J_{\omega}^{-1}\circ P^{*,\omega}. \mlabel{eq:commutate}
\end{equation}
For any $f=J_{\omega}^{-1}(x)$, $g=J_{\omega}^{-1}(y)\in A^{*}$, we have,
\begin{align*}
(-P^*)f \cdot_r (-P^*)g
&= -P^*(J_\omega^{-1}x)\cdot_r (-P^*)(J_\omega^{-1}y) \\
&\overset{\eqref{eq:commutate}}{=} J_\omega^{-1}(-P^{*,\omega}x)\cdot_r J_\omega^{-1}(-P^{*,\omega}y) \\
&\overset{\eqref{eq:rb}}{=} -\frac{1}{\lambda} J_\omega^{-1}\!\left(
(-P^{*,\omega}x)\circ_B(-P^{*,\omega}y)
\right)
\\
&= \frac{1}{\lambda} J_\omega^{-1}(-P^{*,\omega})\!\left(
(-P^{*,\omega}x)\circ_B y
\right) \\
&\overset{\eqref{eq:commutate}}{=} -\frac{1}{\lambda} P^* J_\omega^{-1}\!\left(
(-P^{*,\omega}x)\circ_B y
\right) \\
&\overset{\eqref{eq:rb}}{=} (-P^*)\!\left(
J_\omega^{-1}(-P^{*,\omega}x)\cdot_r J_\omega^{-1}y
\right) \\
&\overset{\eqref{eq:commutate}}{=} (-P^*)\!\left(
(-P^*)(J_\omega^{-1}x)\cdot_r J_\omega^{-1}y
\right) \\
&= (-P^*)\bigl(
(-P^*)f \cdot_r g
\bigr).
\end{align*}
Similarly, $(-P^*)f \cdot_r (-P^*)g = (-P^*)\bigl(f \cdot_r (-P^*)g \bigr)$. Thus, $(A, \circ, \Delta_{r^{B, \omega}}, P, -P)$ is an averaging pre-lie bialgebra.
\end{proof}

\section{Admissible classical Yang-Baxter equation and relative Rota-Baxter operators} \label{sec:adalybinapl}

In this section, we introduce the notion of the classical Yang-Baxter equation in
an averaging pre-Lie algebra with respect to a linear map, whose solution gives rise to an averaging pre-Lie bialgebra. Then
we introduce the notion of relative Rota-Baxter operators on an averaging Lie algebra
with respect to a representation, which can give rise to solutions of the classical Yang-Baxter equation in the semidirect product averaging pre-Lie algebra.

\subsection{Admissible classical Yang-Baxter equation in the averaging pre-Lie algebra}

Suppose that $\bigl((A, \circ), P\bigr)$ is an $S$-admissible averaging pre-Lie algebra. In order to let $(A, \circ, \Delta_r, P, S)$ be an averaging pre-Lie bialgebra, we only need to further require that $\bigl((A^*, \Delta_r^*), S^*)$ is $P^*$-admissible averaging pre-Lie algebra, that is, $\bigl((A, \Delta_r), S)$ is an averaging pre-Lie coalgebra and ~\eqref{eq:aplb1} and ~\eqref{eq:aplb2} hold.

\begin{lemma}\label{lemma:averpreliebi}
Let $\bigl((A,\circ),P\bigr)$ be an $S$-admissible averaging pre-Lie algebra and $r = \sum_{i=1}^{n} a_i \otimes b_i \in A \otimes A$. Define a linear map $\Delta_r : A \to A \otimes A$ by Eq.~\eqref{eq:deltar}. Suppose that $\Delta_r^*$ defines a pre-Lie algebra structure on $A^*$. Then the following conclusions hold.
\begin{enumerate}
\item Eq.~\eqref{eq:aapLieco} holds if and only if, for all $x \in A$,
\begin{equation}
\begin{aligned}
\bigl(S\circ L_x \otimes S + S \otimes S\circ (L_x - R_x)\bigr)(r)
= \bigl(L_{S(x)} \circ P \otimes \mathrm{id} + S \otimes (L_{S(x)} - R_{S(x)})\bigr)(r) \\
(L_{S(x)} \otimes \mathrm{id})
  \bigl(P \otimes \mathrm{id} - \mathrm{id}\otimes S\bigr)(r)
= \bigl(\mathrm{id}\otimes L_{S(x)} - \mathrm{id}\otimes R_{S(x)}\bigr)
    \bigl(\mathrm{id}\otimes P  - S\otimes \mathrm{id}\bigr)(r)
\end{aligned} \mlabel{eq:combined1}
\end{equation}

\item Eq.~\eqref{eq:aplb1} holds if and only if, for all $x \in A$,
\begin{equation}
\begin{aligned}
&(P\circ L_x \otimes S + P\otimes S\circ (L_x - R_x))(r)
= \bigl(L_{P(x)}\circ P \otimes \mathrm{id}
   + P\otimes (L_{P(x)}-R_{P(x)})\bigr)(r) \\
&\bigl(L_{P(x)}\otimes \mathrm{id}
      + \mathrm{id}\otimes L_{P(x)}
      - \mathrm{id}\otimes R_{P(x)}\bigr)
 \bigl(\mathrm{id}\otimes S - P\otimes \mathrm{id}\bigr)(r)=0
\end{aligned} \mlabel{eq:combined2}
\end{equation}

\item Eq.~\eqref{eq:aplb2} holds if and only if, for all $x \in A$,
\begin{equation}
\begin{aligned}
&(S\circ L_x \otimes P + S\otimes P\circ (L_x - R_x))(r)
= \bigl(L_{P(x)}\circ S \otimes \mathrm{id}
   + S\otimes (L_{P(x)}-R_{P(x)})\bigr)(r) \\
&\bigl(
L_{P(x)}\otimes \mathrm{id}
+ \mathrm{id}\otimes (L_{P(x)}-R_{P(x)})
\bigr)
\bigl(S\otimes \mathrm{id}-\mathrm{id}\otimes P\bigr)(r)=0
\end{aligned} \mlabel{eq:combined3}
\end{equation}
\end{enumerate}
\end{lemma}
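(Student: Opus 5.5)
Each of the three items comes from substituting $\Delta_r(x)=(L_x\otimes\mathrm{id}+\mathrm{id}\otimes(L_x-R_x))(r)$ into the two equalities making up \eqref{eq:aapLieco}, \eqref{eq:aplb1} and \eqref{eq:aplb2}, and then using the $S$-admissibility identities \eqref{eq:regu1} and \eqref{eq:regu2} and the averaging identity \eqref{eq:averop} to move $P$ and $S$ past the multiplication operators. The preliminary step is to rewrite the commutation rules at the level of operators. Read as operators in $y$, \eqref{eq:averop} gives $P L_{P(x)}=L_{P(x)}P=P L_x P$ and $P R_{P(x)}=R_{P(x)}P=P R_x P$. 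Similarly, \eqref{eq:regu1} gives $S L_{P(x)}=L_{P(x)}S=S L_x S$, and \eqref{eq:regu2} gives $S R_{P(y)}=R_{P(y)}S=S R_y S$ and $L_{S(x)}P=S L_x P=S L_{S(x)}$. I will write $D_x:=L_x\otimes\mathrm{id}+\mathrm{id}\otimes(L_x-R_x)$, so that $\Delta_r(x)=D_x(r)$.

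For item (1), I compute the three expressions $(S\otimes S)D_x(r)$, $(S\otimes\mathrm{id})D_{S(x)}(r)$ and $(\mathrm{id}\otimes S)D_{S(x)}(r)$. The first equality $(S\otimes S)\Delta_r(x)=(S\otimes\mathrm{id})\Delta_r(S(x))$ is $\bigl(SL_x\otimes S+S\otimes S(L_x-R_x)\bigr)(r)=\bigl(SL_{S(x)}\otimes\mathrm{id}+S\otimes(L_{S(x)}-R_{S(x)})\bigr)(r)$. Replacing $SL_{S(x)}$ by $L_{S(x)}P$ via \eqref{eq:regu2} turns this into the first line of \eqref{eq:combined1}. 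For the second equality, I subtract $(\mathrm{id}\otimes S)D_{S(x)}(r)$ from $(S\otimes\mathrm{id})D_{S(x)}(r)$. The term $(L_{S(x)}P\otimes\mathrm{id})(r)-(L_{S(x)}\otimes S)(r)$ regroups as $(L_{S(x)}\otimes\mathrm{id})(P\otimes\mathrm{id}-\mathrm{id}\otimes S)(r)$. The remaining terms regroup as $(S\otimes(L_{S(x)}-R_{S(x)}))(r)-(\mathrm{id}\otimes S(L_{S(x)}-R_{S(x)}))(r)$, and I want this to equal $(\mathrm{id}\otimes(L_{S(x)}-R_{S(x)}))(S\otimes\mathrm{id}-\mathrm{id}\otimes P)(r)$. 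That requires $S(L_{S(x)}-R_{S(x)})=(L_{S(x)}-R_{S(x)})P$, which \eqref{eq:regu2} supplies on the $L$ part. The $R$ part needs an analogous identity, and I expect to extract it from \eqref{eq:regu1} with the roles of the arguments swapped: $S(y\circ S(x))=S(P(y)\circ x)$-type manipulations.

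Items (2) and (3) go the same way with $P$ and $S$ interchanged in the appropriate slots. In item (2), the first equality $(P\otimes S)D_x(r)=(\mathrm{id}\otimes S)D_{P(x)}(r)$ becomes the first line of \eqref{eq:combined2} after moving $P$ through $L_{P(x)}$ on the first factor with $PL_{P(x)}=L_{P(x)}P$. Equating $(\mathrm{id}\otimes S)D_{P(x)}(r)$ with $(P\otimes\mathrm{id})D_{P(x)}(r)$ then becomes $D_{P(x)}(\mathrm{id}\otimes S-P\otimes\mathrm{id})(r)=0$, once $S$ commutes with $L_{P(x)}$ and $R_{P(x)}$ (by \eqref{eq:regu1}, \eqref{eq:regu2}) and $P$ commutes with $L_{P(x)}$ (by \eqref{eq:averop}). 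Item (3) is symmetric.

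The main obstacle is purely bookkeeping: getting each commutation identity used in the correct tensor slot and the correct direction. The delicate point is the $R_{S(x)}$ term in item (1), where the needed relation between $S R_{S(x)}$ and $R_{S(x)}P$ is not literally one of the listed identities. I would first try to derive it from \eqref{eq:regu1} and \eqref{eq:regu2}. If that fails, I would keep the term in the unreduced form, with $S$ applied on the second factor, and state the equivalence that way.
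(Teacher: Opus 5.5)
This is essentially the paper's proof: you expand the three members of each of \eqref{eq:aapLieco}, \eqref{eq:aplb1} and \eqref{eq:aplb2} with $\Delta_r(x)=D_x(r)$, move $P$ and $S$ past $L$ and $R$ using \eqref{eq:averop}, \eqref{eq:regu1} and \eqref{eq:regu2}, and regroup.

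Two small corrections:
\begin{enumerate}
\item The identity you flagged, $SR_{S(x)}=R_{S(x)}P$, is literally \eqref{eq:regu1} read in its first argument, namely $S(y\circ S(x))=P(y)\circ S(x)$. So no fallback is needed, and item (1) comes out exactly as stated.
\item In item (2), the first line of \eqref{eq:combined2} is the equality $(P\otimes S)\Delta_r(x)=(P\otimes\mathrm{id})\Delta_r(P(x))$ of the first and third members of \eqref{eq:aplb1}. That is where $PL_{P(x)}=L_{P(x)}P$ enters; it is not the equality of the first and second members, as you wrote. This does not affect the equivalence, since the second line of \eqref{eq:combined2} equates the second and third members.
\end{enumerate}
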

\begin{proof}
\begin{enumerate}
\item  For all $x \in A$, we have
\begin{align*}
(S \otimes S)\Delta_r(x)
&\overset{\eqref{eq:deltar}}{=} \sum_{i=1}^{n} (S\otimes S)\bigl(x\circ a_i \otimes b_i + a_i \otimes x\circ b_i - a_i \otimes b_i\circ x\bigr)\\
&= \sum_{i=1}^{n} \bigl(S(x\circ a_i)\otimes S(b_i) + S(a_i)\otimes S(x\circ b_i) - S(a_i)\otimes S(b_i\circ x)\bigr)\\
&= (S\circ L_x \otimes \mathrm{id})(\mathrm{id}\otimes S)(r) + (\mathrm{id}\otimes S\circ L_x)(S\otimes \mathrm{id})(r) - (\mathrm{id}\otimes S\circ R_x)(S\otimes \mathrm{id})(r)\\
(S\otimes \mathrm{id})\,\Delta_r(S(x))
&\overset{\eqref{eq:deltar}}{=}\sum_{i=1}^{n} (S\otimes \mathrm{id})\bigl(S(x)\circ a_i \otimes b_i + a_i \otimes S(x)\circ b_i - a_i \otimes b_i \circ S(x)\bigr)\\
&= \sum_{i=1}^{n} \bigl(S(S(x)\circ a_i)\otimes b_i + S(a_i)\otimes S(x)\circ b_i - S(a_i)\otimes b_i\circ S(x)\bigr)\\
&\overset{\eqref{eq:regu2}}{=} \sum_{i=1}^{n} \bigl(S(x)\circ P(a_i)\otimes b_i + S(a_i)\otimes S(x)\circ b_i - S(a_i)\otimes b_i\circ S(x)\bigr)\\
&= (L_{S(x)}\otimes \mathrm{id})(P\otimes \mathrm{id})(r)
   + (\mathrm{id}\otimes L_{S(x)})(S\otimes \mathrm{id})(r)
   - (\mathrm{id}\otimes R_{S(x)})(S\otimes \mathrm{id})(r)\\
(id\otimes S)\,\Delta_r(S(x))
&\overset{\eqref{eq:deltar}}{=} \sum_{i=1}^{n} (\mathrm{id}\otimes S)\bigl(S(x)\circ a_i \otimes b_i + a_i \otimes S(x)\circ b_i - a_i \otimes b_i \circ S(x)\bigr)\\
&= \sum_{i=1}^{n} \bigl(S(x)\circ a_i \otimes S(b_i) + a_i \otimes S(S(x)\circ b_i) - a_i \otimes S(b_i\circ S(x))\bigr)\\
&\overset{\eqref{eq:regu1} \eqref{eq:regu2}}{=} \sum_{i=1}^{n} \bigl(S(x)\circ a_i \otimes S(b_i) + a_i \otimes S(x)\circ P(b_i) - a_i \otimes P(b_i)\circ S(x)\bigr)\\
&= (L_{S(x)}\otimes \mathrm{id})(id\otimes S)(r)
   + (id\otimes L_{S(x)})(\mathrm{id}\otimes P)(r)
   - (id\otimes R_{S(x)})(\mathrm{id}\otimes P)(r)
\end{align*}
Then Eq.~\eqref{eq:aapLieco} holds if and only if Eq.~\eqref{eq:combined1} holds.

\item  For all $x \in A$, we obtain
\begin{align*}
(P\otimes S)\,\Delta_r(x)
&\overset{\eqref{eq:deltar}}{=} \sum_{i=1}^{n} (P\otimes S)\bigl(x\circ a_i \otimes b_i + a_i \otimes x\circ b_i - a_i \otimes b_i\circ x\bigr)\\
&= \sum_{i=1}^{n} \bigl( P(x\circ a_i)\otimes S(b_i) + P(a_i)\otimes S(x\circ b_i) - P(a_i)\otimes S(b_i\circ x)\bigr)\\
&= (P\circ L_x \otimes S)(r) + (P\otimes S\circ L_x)(r) - (P\otimes S\circ R_x)(r)\\
&= (P\circ L_x \otimes \mathrm{id})(\mathrm{id}\otimes S)(r)
   + (\mathrm{id}\otimes S\circ L_x)(P\otimes \mathrm{id})(r)
   - (\mathrm{id}\otimes S\circ R_x)(P\otimes \mathrm{id})(r)\\
(\mathrm{id}\otimes S)\,\Delta_r(P(x))
&\overset{\eqref{eq:deltar}}{=} \sum_{i=1}^{n} (\mathrm{id}\otimes S)\bigl(P(x)\circ a_i \otimes b_i + a_i \otimes P(x)\circ b_i - a_i \otimes b_i\circ P(x)\bigr)\\
&= \sum_{i=1}^{n} \bigl( P(x)\circ a_i \otimes S(b_i) + a_i \otimes S(P(x)\circ b_i) - a_i \otimes S(b_i\circ P(x))\bigr)\\
&\overset{\eqref{eq:regu1} \eqref{eq:regu2}}{=} \sum_{i=1}^{n} \bigl( P(x)\circ a_i \otimes S(b_i) + a_i \otimes P(x)\circ S(b_i) - a_i \otimes S(b_i)\circ P(x) \bigr)\\
&= (L_{P(x)}\otimes S)(r) + (\mathrm{id}\otimes L_{P(x)}\circ S)(r) - (\mathrm{id}\otimes R_{P(x)}\circ S)(r)\\
&= (L_{P(x)}\otimes \mathrm{id})(\mathrm{id}\otimes S)(r)
   + (\mathrm{id}\otimes L_{P(x)})(\mathrm{id}\otimes S)(r)
   - (\mathrm{id}\otimes R_{P(x)})(\mathrm{id}\otimes S)(r)\\
(P\otimes \mathrm{id})\,\Delta_r(P(x))
&\overset{\eqref{eq:deltar}}{=} \sum_{i=1}^{n} (P\otimes \mathrm{id})\bigl(P(x)\circ a_i \otimes b_i + a_i \otimes P(x)\circ b_i - a_i \otimes b_i\circ P(x)\bigr)\\
&= \sum_{i=1}^{n} \bigl( P\bigl(P(x)\circ a_i\bigr)\otimes b_i + P(a_i)\otimes P(x)\circ b_i - P(a_i)\otimes b_i\circ P(x) \bigr) \\
&\overset{\eqref{eq:averop}}{=} \sum_{i=1}^{n} \bigl( P(x)\circ P(a_i)\otimes b_i + P(a_i)\otimes P(x)\circ b_i - P(a_i)\otimes b_i\circ P(x) \bigr)\\
&= (L_{P(x)}\circ P \otimes id)(r) + (P\otimes L_{P(x)})(r) - (P\otimes R_{P(x)})(r)\\
&= (L_{P(x)}\otimes \mathrm{id})(P\otimes \mathrm{id})(r)
   + (\mathrm{id}\otimes L_{p(x)})(P\otimes \mathrm{id})(r)
   - (\mathrm{id}\otimes R_{P(x)})(P\otimes \mathrm{id})(r)
\end{align*}
Then Eq.~\eqref{eq:aplb1} holds if and only if Eq.~\eqref{eq:combined2} holds.

\item  For all $x \in A$, we obtain
\begin{align*}
(S\otimes P)\,\Delta_r(x)
&\overset{\eqref{eq:deltar}}{=} \sum_{i=1}^n (S\otimes P)\bigl(x\circ a_i \otimes b_i + a_i \otimes x\circ b_i - a_i \otimes b_i\circ x\bigr)\\
&= \sum_{i=1}^n \bigl(S(x\circ a_i)\otimes P(b_i) + S(a_i)\otimes P(x\circ b_i) - S(a_i)\otimes P(b_i\circ x)\bigr)\\
&= (S\circ L_x \otimes \mathrm{id})(\mathrm{id}\otimes P)(r)
   + (\mathrm{id}\otimes P\circ L_x)(S\otimes \mathrm{id})(r)
   - (\mathrm{id}\otimes P\circ R_x)(S\otimes \mathrm{id})(r)\\
(S\otimes \mathrm{id})\,\Delta_r(P(x))
&\overset{\eqref{eq:deltar}}{=} \sum_{i=1}^n (S\otimes \mathrm{id})\bigl(P(x)\circ a_i \otimes b_i + a_i \otimes P(x)\circ b_i - a_i \otimes b_i\circ P(x)\bigr)\\
&= \sum_{i=1}^n \bigl(S(P(x)\circ a_i)\otimes b_i + S(a_i)\otimes P(x)\circ b_i - S(a_i)\otimes b_i\circ P(x)\bigr)\\
&\overset{\eqref{eq:regu1}}{=} \sum_{i=1}^n \bigl(P(x)\circ S(a_i)\otimes b_i + S(a_i)\otimes P(x)\circ b_i - S(a_i)\otimes b_i\circ P(x)\bigr)\\
&= (L_{P(x)}\otimes \mathrm{id})(S\otimes \mathrm{id})(r)
   + (\mathrm{id}\otimes L_{P(x)})(S\otimes \mathrm{id})(r)
   - (\mathrm{id}\otimes R_{P(x)})(S\otimes \mathrm{id})(r)\\
(\mathrm{id}\otimes P)\,\Delta_r(P(x))
&\overset{\eqref{eq:deltar}}{=} \sum_{i=1}^n (\mathrm{id}\otimes P)\bigl(P(x)\circ a_i \otimes b_i + a_i \otimes P(x)\circ b_i - a_i \otimes b_i\circ P(x)\bigr)\\
&= \sum_{i=1}^n \bigl(P(x)\circ a_i \otimes P(b_i) + a_i \otimes P(P(x)\circ b_i) - a_i \otimes P(b_i\circ P(x))\bigr)\\
&\overset{\eqref{eq:averop}}{=}\sum_{i=1}^n \bigl(P(x)\circ a_i \otimes P(b_i) + a_i \otimes P(x)\circ P(b_i) - a_i \otimes P(b_i)\circ P(x)\bigr)\\
&= (L_{P(x)}\otimes \mathrm{id})(\mathrm{id}\otimes P)(r)
   + (\mathrm{id}\otimes L_{P(x)})(\mathrm{id}\otimes P)(r)
   - (\mathrm{id}\otimes R_{P(x)})(\mathrm{id}\otimes P)(r)
\end{align*}
Then Eq.~\eqref{eq:aplb2} holds if and only if Eq.~\eqref{eq:combined3} holds.
\end{enumerate}
\end{proof}

\begin{theorem}
Let $\bigl((A,\circ),P\bigr)$ be an $S$-admissible averaging pre-Lie algebra and $r \in A \otimes A$. Define a linear map $\Delta_r : A \to A \otimes A$ by Eq.~\eqref{eq:deltar}. Then $(A, \circ, \Delta_r, P, S)$ is an averaging pre-Lie bialgebra if and only if Eqs.~\eqref{eq:symmetaic}-\eqref{eq:S-equation} and ~\eqref{eq:combined1}-\eqref{eq:combined3} hold.
\end{theorem}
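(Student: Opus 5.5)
The proof will unpack Definition~\ref{def:apLbi} item by item and match each requirement with one of the listed equations. Since $\bigl((A,\circ),P\bigr)$ is assumed to be an $S$-admissible averaging pre-Lie algebra, items (1) and (4) of Definition~\ref{def:apLbi} hold automatically. What remains is three conditions:
\begin{enumerate}
\item[(a)] $(A,\circ,\Delta_r)$ is a pre-Lie bialgebra, including that $(A,\Delta_r)$ is a pre-Lie coalgebra;
\item[(b)] $\bigl((A,\Delta_r),S\bigr)$ satisfies Eq.~\eqref{eq:aapLieco};
\item[(c)] Eqs.~\eqref{eq:aplb1} and \eqref{eq:aplb2} hold for $\Delta_r$.
\end{enumerate}

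For (a), I will invoke Theorem~\ref{theorem:preliebi} directly. Writing $r=a+\Lambda$ with $a\in\wedge^2A$ and $\Lambda\in S^2(A)$, the map $\Delta_r$ makes $(A,\Delta_r)$ a pre-Lie coalgebra with $(A,\circ,\Delta_r)$ a pre-Lie bialgebra if and only if Eqs.~\eqref{eq:symmetaic} and \eqref{eq:S-equation} hold. In particular, once (a) holds, $\Delta_r^*$ defines a pre-Lie algebra structure on $A^*$. This is exactly the standing hypothesis of Lemma~\ref{lemma:averpreliebi}, so that lemma becomes available.

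For (b) and (c), I will apply Lemma~\ref{lemma:averpreliebi}. Its item (1) says Eq.~\eqref{eq:aapLieco} is equivalent to Eq.~\eqref{eq:combined1}. Its items (2) and (3) say Eqs.~\eqref{eq:aplb1} and \eqref{eq:aplb2} are equivalent to Eqs.~\eqref{eq:combined2} and \eqref{eq:combined3}. Combined with (a), this shows that $\bigl((A,\Delta_r),S\bigr)$ is an averaging pre-Lie coalgebra and that item (5) of Definition~\ref{def:apLbi} holds.

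The two directions then run as follows.
\begin{itemize}
\item[$\Rightarrow$] If $(A,\circ,\Delta_r,P,S)$ is an averaging pre-Lie bialgebra, then (a) gives Eqs.~\eqref{eq:symmetaic}--\eqref{eq:S-equation}. Since $\Delta_r^*$ is then pre-Lie, the lemma converts (b) and (c) into Eqs.~\eqref{eq:combined1}--\eqref{eq:combined3}.
\item[$\Leftarrow$] If all the equations hold, first use Theorem~\ref{theorem:preliebi} to get the pre-Lie bialgebra, and hence the pre-Lie structure on $A^*$. Then apply the lemma to recover (b) and (c).
\end{itemize}

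The only delicate point is the order of the argument. Lemma~\ref{lemma:averpreliebi} presupposes that $\Delta_r^*$ is pre-Lie, so the bialgebra conditions from Theorem~\ref{theorem:preliebi} must be established before the lemma is invoked. I do not expect any further obstacle, since all the computations are contained in the cited results.
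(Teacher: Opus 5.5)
Your proposal is correct and matches the paper's proof. Both reduce the statement to Definition~\ref{def:apLbi}, take items (1) and (4) from the hypothesis, handle the pre-Lie bialgebra condition with Theorem~\ref{theorem:preliebi}, and handle Eq.~\eqref{eq:aapLieco} and Eqs.~\eqref{eq:aplb1}--\eqref{eq:aplb2} with the three parts of Lemma~\ref{lemma:averpreliebi}; your care over ordering (getting the pre-Lie structure on $A^*$ before invoking the lemma) is left implicit in the paper, though the lemma's computations never actually use that hypothesis.
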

\begin{proof}
By Definition ~\ref{def:apLbi}, $(A, \circ, \Delta_r, P, S)$ is an averaging pre-Lie bialgebra if and only if $(A, \circ, \Delta_r)$ is a pre-Lie bialgebra and $\bigl((A^*, \Delta_r^*), S^*)$ is $P^*$-admissible averaging pre-Lie algebra. Then, by Theorem ~\ref{theorem:preliebi}, $(A, \circ, \Delta_r)$ is a pre-Lie bialgebra if and only if Eqs.~\eqref{eq:symmetaic}-\eqref{eq:S-equation} hold and by Lemma ~\ref{lemma:averpreliebi}, $\bigl((A^*, \Delta_r^*), S^*)$ is $P^*$-admissible averaging pre-Lie algebra if and only if ~\eqref{eq:combined1}-\eqref{eq:combined3} hold.
\end{proof}

In particular, by further simplifying the equivalent conditions, we obtain the following theorem.

\begin{theorem} \label{theorem:sapplbi}
Let $\bigl((A,\circ),P\bigr)$ be an $S$-admissible averaging pre-Lie algebra and $r \in A \otimes A$. Define a linear map $\Delta_r : A \to A \otimes A$ by Eq.~\eqref{eq:deltar}. Then $(A, \circ, \Delta_r, P, S)$ is an averaging pre-Lie bialgebra if Eq.~\eqref{eq:symmetaic} and the following equations hold:
\begin{align}
\sum_{i,j=1}^n
\bigl(
a_i \otimes b_i \circ a_j \otimes b_j
+ a_i \otimes a_j \otimes b_i \circ b_j
\bigr)
&=
\sum_{i,j=1}^n
\bigl(
a_i \circ a_j \otimes b_i \otimes b_j
+ a_i \otimes a_j \otimes b_j \circ b_i
\bigr), \mlabel{eq:cond0}\\
(S\otimes \mathrm{id}-\mathrm{id}\otimes P)(r) &= 0, \mlabel{eq:cond1}\\
(p\otimes \mathrm{id}-\mathrm{id}\otimes S)(r) &= 0. \mlabel{eq:cond2}
\end{align}
\end{theorem}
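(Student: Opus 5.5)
We reduce to the preceding theorem: $(A,\circ,\Delta_r,P,S)$ is an averaging pre-Lie bialgebra if and only if Eqs.~\eqref{eq:symmetaic}, \eqref{eq:S-equation} and \eqref{eq:combined1}--\eqref{eq:combined3} hold. Eq.~\eqref{eq:symmetaic} is assumed. Eq.~\eqref{eq:cond0} says exactly $[[r,r]]=0$, by the explicit formula for $[[r,r]]$ in Theorem~\ref{theorem:preliebi}. Hence $Q(x)[[r,r]]=0$ for all $x$, which is Eq.~\eqref{eq:S-equation}. By Theorem~\ref{theorem:preliebi}, $(A,\circ,\Delta_r)$ is then a pre-Lie bialgebra, so $\Delta_r^*$ makes $A^*$ a pre-Lie algebra, as Lemma~\ref{lemma:averpreliebi} requires. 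It remains to derive \eqref{eq:combined1}--\eqref{eq:combined3} from \eqref{eq:cond1}, \eqref{eq:cond2} and $S$-admissibility, Eqs.~\eqref{eq:regu1}--\eqref{eq:regu2}.

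\textbf{The second lines.} These are immediate. The second line of \eqref{eq:combined2} applies an operator to $(\mathrm{id}\otimes S-P\otimes\mathrm{id})(r)$, which vanishes by \eqref{eq:cond2}. The second line of \eqref{eq:combined3} applies an operator to $(S\otimes\mathrm{id}-\mathrm{id}\otimes P)(r)$, which vanishes by \eqref{eq:cond1}. In the second line of \eqref{eq:combined1}, the left side contains $(P\otimes\mathrm{id}-\mathrm{id}\otimes S)(r)=0$ and the right side contains $(\mathrm{id}\otimes P-S\otimes\mathrm{id})(r)=0$, so both sides are zero.

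\textbf{The first lines.} Here we use \eqref{eq:cond1} and \eqref{eq:cond2} in the forms $(S\otimes\mathrm{id})r=(\mathrm{id}\otimes P)r$ and $(P\otimes\mathrm{id})r=(\mathrm{id}\otimes S)r$. For \eqref{eq:combined2}, write $(P\circ L_x\otimes S)(r)=(P\circ L_x\otimes\mathrm{id})(\mathrm{id}\otimes S)r=(P\circ L_x\circ P\otimes\mathrm{id})r$. By \eqref{eq:averop}, $P(x\circ P(a))=P(x)\circ P(a)$, so this equals $(L_{P(x)}\circ P\otimes\mathrm{id})r$. Next, $(P\otimes S\circ L_x)r=(\mathrm{id}\otimes S L_x S)(P\otimes \mathrm{id})$-type rewriting converts it to $\sum P(a_i)\otimes S(x\circ S(b_i))$. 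By \eqref{eq:regu1}-type identities we want this to become $P(x)\circ$-type terms, but \eqref{eq:regu1} gives $S(x\circ S(y))=P(x)\circ S(y)$, i.e.\ $L_{P(x)}S$, not $L_{P(x)}$. So instead we route through the other factor: $\sum P(a_i)\otimes S(x\circ b_i)$. Moving $P$ across via $(P\otimes\mathrm{id})r=(\mathrm{id}\otimes S)r$ gives $\sum a_i\otimes S(x\circ S(b_i))=\sum a_i\otimes P(x)\circ S(b_i)$, and moving $S$ back gives $\sum P(a_i)\otimes P(x)\circ b_i=(P\otimes L_{P(x)})r$. The $R_x$ term is handled analogously using \eqref{eq:regu2}: $S(S(b)\circ x)=S(b)\circ P(x)$. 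This yields the first line of \eqref{eq:combined2}.

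The first line of \eqref{eq:combined3} is proved symmetrically, swapping the roles of \eqref{eq:cond1} and \eqref{eq:cond2}. The first line of \eqref{eq:combined1} uses the identity $S(S(x)\circ y)=S(x)\circ P(y)$ from \eqref{eq:regu2} together with the same transport trick.

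\textbf{Main obstacle.} The hard step is the first lines. Each term has $S$ or $P$ acting on a product containing $x$, and it must be moved onto the opposite tensor factor before \eqref{eq:averop}, \eqref{eq:regu1} or \eqref{eq:regu2} can absorb it. Choosing which tensor factor to transport through requires care. For \eqref{eq:combined1} especially, the terms may only match after first rewriting the left side completely and then comparing with the right side term by term. If one term does not close, the fallback is to transport both sides to the normal form $(\ast\otimes\mathrm{id})r$ and compare there.
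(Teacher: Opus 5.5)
Your proposal is correct and follows the paper's own route. You reduce to the preceding theorem, note that \eqref{eq:cond0} gives \eqref{eq:S-equation}, and verify \eqref{eq:combined1}--\eqref{eq:combined3} by moving $S$ and $P$ across the tensor factors with \eqref{eq:cond1}--\eqref{eq:cond2}, then absorbing them via \eqref{eq:averop}, \eqref{eq:regu1}, \eqref{eq:regu2}; the paper writes this out only for the first line of \eqref{eq:combined1} and says ``similarly'', so your explicit handling of the second lines and of \eqref{eq:combined2} fills in what it omits.

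One correction: the first line of \eqref{eq:combined1} closes term by term using $S(x\circ P(y))=S(x)\circ P(y)$ from \eqref{eq:regu2} and $S(P(y)\circ x)=P(y)\circ S(x)$ from \eqref{eq:regu1}, not the identity $S(S(x)\circ y)=S(x)\circ P(y)$ you cite, so the obstacle you anticipate does not arise and no fallback is needed.
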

\begin{proof}
For all $x\in A$, we have
\begin{align*}
&\qquad \bigl( S\circ L_x \otimes S + S \otimes S\circ (L_x-R_x) \bigr)(r)
-\bigl( L_{S(x)}\circ P \otimes \mathrm{id} + S\otimes (L_{S(x)}-R_{S(x)}) \bigr)(r)\\
&= \bigl(S\circ L_x \otimes \mathrm{id}\bigr)\bigl(\mathrm{id}\otimes S\bigr)(r)
+ \bigl(\mathrm{id}\otimes S\circ(L_x-R_x)\bigr)\bigl(S\otimes \mathrm{id}\bigr)(r)
-\bigl(L_{S(x)}\circ P \otimes \mathrm{id}\bigr)(r)\\[4pt]
&\;
-\bigl(\mathrm{id}\otimes (L_{S(x)}-R_{S(x)})\bigr)\bigl(S\otimes \mathrm{id}\bigr)(r)\\
&\overset{\eqref{eq:cond1}\eqref{eq:cond2}}{=} \bigl(S\circ L_x \otimes \mathrm{id}\bigr)\bigl(P\otimes \mathrm{id}\bigr)(r)
-\bigl(L_{S(x)}\circ P \otimes \mathrm{id}\bigr)(r)
+ \bigl(\mathrm{id}\otimes S\circ(L_x-R_x)\bigr)\bigl(\mathrm{id}\otimes P\bigr)(r)\\[4pt]
&\;
-\bigl(\mathrm{id}\otimes (L_{S(x)}-R_{S(x)})\bigr)\bigl(\mathrm{id}\otimes P\bigr)(r)\\
&= \bigl(\bigl(S\circ L_x\circ P - L_{S(x)}\circ P\bigr)\otimes \mathrm{id}\bigr)(r)
+ \bigl(\mathrm{id}\otimes \bigl(S\circ(L_x-R_x)\circ P - (L_{S(x)}-R_{S(x)})\circ P\bigr)\bigr)(r)\\
&\overset{\eqref{eq:regu1} \eqref{eq:regu2}}{=} 0,
\end{align*}
which implies Eq.~\eqref{eq:combined1} holds.
Similarly, Eq.~\eqref{eq:combined2} and  Eq.~\eqref{eq:combined3} hold. Therefore, $(A, \circ, \Delta_r, P, S)$ is an averaging pre-Lie bialgebra if Eq.~\eqref{eq:symmetaic} and ~\eqref{eq:cond0}-~\eqref{eq:cond2} hodls.
\end{proof}
\noindent{Eq.~\eqref{eq:cond0} is the well-known \textbf{classical Yang-Baxter equation in $(A, \circ)$ or the $\mathbb{S}$-equation in $(A, \circ)$}.}

\begin{defn}
Let $\bigl((A,\circ),P\bigr)$ be an averaging pre-Lie algebra. Suppose that $r \in A\otimes A$ and $S:A \to A$ is a linear map. Then the $\mathbb{S}$-equation (i.e.Eq.~\eqref{eq:cond0}) together with Eq.~\eqref{eq:cond1} and Eq.~\eqref{eq:cond2} is called the \textbf{$S$-averaging $\mathbb{S}$-equation} or the \textbf{$S$-admissible classical Yang-Baxter equation in the averaging pre-Lie algebra $((A,\circ),P)$}.
\end{defn}

\begin{remark} \label{remark:rsym}
If $r$ is symmetric, then $\sum_{i=1}^{n} \bigl(S(a_i)\otimes b_i\bigr)
=
\sum_{i=1}^{n} \bigl(a_i \otimes P(b_i)\bigr)
$
is equivalent to
$
\sum_{i=1}^{n} \bigl(P(a_i)\otimes b_i\bigr)
=
\sum_{i=1}^{n} \bigl(a_i \otimes S(b_i)\bigr).
$
Therefore, Eq.~\eqref{eq:cond1} holds if and only if Eq.~\eqref{eq:cond2} holds.
\end{remark}

By Theorem~\ref{theorem:sapplbi} and Remark~\ref{remark:rsym}, we obtain

\begin{theorem} \label{theorem:ssapplbi}
Let $\bigl((A,\circ),P\bigr)$ be an $S$-admissible averaging pre-Lie algebra and $\Delta_r$ given by Eq.~\eqref{eq:deltar}.
If $r \in A\otimes A$ is a symmetric solution of the $S$-admissible classical Yang-Baxter equation in $((A,\circ),P)$. Then $(A,\circ,\Delta_r,P,S\bigr)$ is an averaging pre-Lie bialgebra.
\end{theorem}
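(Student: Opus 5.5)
The plan is to reduce Theorem~\ref{theorem:ssapplbi} directly to Theorem~\ref{theorem:sapplbi}. By hypothesis, $r$ is a symmetric solution of the $S$-admissible classical Yang-Baxter equation in $((A,\circ),P)$. Unwinding the definition, $r$ satisfies the $\mathbb{S}$-equation \eqref{eq:cond0} together with \eqref{eq:cond1} and \eqref{eq:cond2}. Since Theorem~\ref{theorem:sapplbi} requires exactly these equations plus Eq.~\eqref{eq:symmetaic}, the only thing left to supply is \eqref{eq:symmetaic}. That theorem then yields that $(A,\circ,\Delta_r,P,S)$ is an averaging pre-Lie bialgebra.

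The key step is checking \eqref{eq:symmetaic}. I would write $r=a+\Lambda$ with $a\in\wedge^2A$ and $\Lambda\in S^2(A)$, as in Theorem~\ref{theorem:preliebi}. Since $r$ is symmetric, $a=\tfrac12(r-\tau(r))=0$. The operator $P(x\circ y)-P(x)P(y)$ appearing in \eqref{eq:symmetaic} is linear, so applied to $a=0$ it gives $0$, and \eqref{eq:symmetaic} holds trivially.

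Remark~\ref{remark:rsym} also shows that for symmetric $r$, \eqref{eq:cond1} and \eqref{eq:cond2} are equivalent. So, read literally, the hypothesis need only include one of them, and the other follows by applying $\tau$ to $r$ and using $\tau(r)=r$. I would state this explicitly so the theorem also covers the weaker reading of the hypothesis. I expect no real obstacle. The only point needing care is that \eqref{eq:symmetaic} involves only the skew-symmetric part of $r$, which vanishes here, so all the substantive verification is already contained in Theorem~\ref{theorem:sapplbi}.
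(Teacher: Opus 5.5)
Your proposal is correct and follows the paper's route: reduce to Theorem~\ref{theorem:sapplbi}, using Remark~\ref{remark:rsym} to pass between \eqref{eq:cond1} and \eqref{eq:cond2}. You also explicitly check \eqref{eq:symmetaic}, which holds because the skew-symmetric part $a$ vanishes for symmetric $r$; the paper's proof leaves this step implicit.
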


\begin{proof}
By Remark~\ref{remark:rsym} and Eq.~\eqref{eq:cond1}, we get
\begin{align*}
(P\otimes \mathrm{id})r = (\mathrm{id}\otimes S)r.
\end{align*}

Therefore $(A,\circ,\Delta_r,P,S\bigr)$ is an averaging pre-Lie bialgebra by Theorem~\ref{theorem:sapplbi}.
\end{proof}

\subsection{Relative Rota-Baxter operators on averaging pre-Lie algebras}

We introduce the notion of a relative Rota-Baxter operator on an averaging pre-Lie algebra.

\begin{defn}
Let $\bigl((V,\rho,\varphi),\alpha\bigr)$ be a representation of an averaging pre-Lie algebra $((A,\circ),P)$. A linear map $T:V\to A$ is called a \textbf{relative Rota-Baxter operator} on $((A,\circ),P)$ with respect to the representation $\bigl((V,\rho,\varphi),\alpha\bigr)$ if the following equations hold:
\begin{align}
T(u)\circ T(v) &= T\bigl(\rho(T(u))v+\varphi(T(v))u\bigr),
,
\qquad
\forall u,v\in V \mlabel{eq:rerb1}\\
P\circ T &= T\circ \alpha. \mlabel{eq:rerb2}
\end{align}
Moreover, a relative Rota--Baxter operator on an averaging pre-Lie algebra with respect to the regular representation $\bigl((A,L,R),P\bigr)$
is called a \textbf{Rota--Baxter operator of weight $0$}.
\end{defn}

\begin{prop}\label{prop:desaverpl}
Let $T:V\to A$ be a relative Rota--Baxter operator on an averaging pre-Lie algebra $((A,\circ),P)$ with respect to the representation $\bigl((V,\rho,\varphi),\alpha\bigr)$.
Define a linear operation $\circ_T$ on $V$ by
\begin{equation}
u \circ_T v:=\rho(Tu)v + \varphi(Tv)u,
\qquad \forall\,u,v\in V.  \mlabel{eq:desprelie}
\end{equation}
Then $((V,\circ_T),\alpha)$ is an averaging pre-Lie algebra, which is called the \textbf{descendent averaging pre-Lie algebra}.
\end{prop}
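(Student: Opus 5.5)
Two things need to be established: that $(V,\circ_T)$ is a pre-Lie algebra, and that $\alpha$ is an averaging operator on it in the sense of Eq.~\eqref{eq:averop}.

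For the first, I would cite the classical result (as in \cite{Bai08}, or the known fact for relative Rota--Baxter operators, also called $\mathcal{O}$-operators, on pre-Lie algebras): if $T$ satisfies Eq.~\eqref{eq:rerb1}, then $\circ_T$ is pre-Lie. If an argument is wanted, one can check Eq.~\eqref{eq:preLie} for $\circ_T$ directly. First, Eq.~\eqref{eq:rerb1} reads $T(u\circ_T v)=T(u)\circ T(v)$. Expand $(u\circ_T v)\circ_T w-u\circ_T(v\circ_T w)$. In the terms $\rho(T(u\circ_T v))w$ and $\varphi(T(v\circ_T w))u$, replace $T(u\circ_T v)$ by $T(u)\circ T(v)$. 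Then group the $\rho\rho$, $\rho\varphi$ and $\varphi\varphi$ terms and use Eqs.~\eqref{eq:ropl1} and \eqref{eq:ropl2}. The result is symmetric in $u,v$. Alternatively, observe that the graph $\{T(v)+v\}$ is a subalgebra of $A\ltimes_{\rho,\varphi}V$ isomorphic to $(V,\circ_T)$.

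The new content is the averaging identity. For $u,v\in V$, I compute
\[
\alpha(u)\circ_T\alpha(v)=\rho(T\alpha(u))\alpha(v)+\varphi(T\alpha(v))\alpha(u)=\rho(P(Tu))\alpha(v)+\varphi(P(Tv))\alpha(u),
\]
using Eq.~\eqref{eq:rerb2}. Next,
\[
\alpha(\alpha(u)\circ_T v)=\alpha\bigl(\rho(P(Tu))v\bigr)+\alpha\bigl(\varphi(Tv)\alpha(u)\bigr).
\]
By the first equality in Eq.~\eqref{eq:rapL1} with $x=Tu$, the first term equals $\rho(P(Tu))\alpha(v)$. By the second equality in Eq.~\eqref{eq:rapL2} with $x=Tv$, the second term equals $\varphi(P(Tv))\alpha(u)$. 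This matches the previous line. Symmetrically,
\[
\alpha(u\circ_T\alpha(v))=\alpha\bigl(\rho(Tu)\alpha(v)\bigr)+\alpha\bigl(\varphi(P(Tv))u\bigr).
\]
Here the second equality of Eq.~\eqref{eq:rapL1} with $x=Tu$ and the first equality of Eq.~\eqref{eq:rapL2} with $x=Tv$ give the same expression again.

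The only point needing care is matching each term with the correct half of the compatibility conditions Eqs.~\eqref{eq:rapL1}--\eqref{eq:rapL2}. The pre-Lie part is standard and I would simply invoke it.
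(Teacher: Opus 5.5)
Your proposal is correct and matches the paper's proof. The paper checks the pre-Lie identity by the direct expansion you sketch: replace $T(u\circ_T v)$ by $Tu\circ Tv$, then apply Eqs.~\eqref{eq:ropl1}--\eqref{eq:ropl2}. It checks the averaging identity, as you do, by substituting $T\alpha=PT$ and matching each term against the appropriate half of Eqs.~\eqref{eq:rapL1}--\eqref{eq:rapL2}.

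Your graph argument would actually give both parts at once. Since $(P+\alpha)(Tv+v)=T\alpha(v)+\alpha(v)$, the graph is a $(P+\alpha)$-stable subalgebra of $A\ltimes_{\rho,\varphi}V$, so Proposition~\ref{prop:semi-repre} applies by restriction.
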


\begin{proof}
For $ \forall u,v,w\in V$ and set $x:=Tu$, $y:=Tv$, $z:=Tw\in A$.
Since $T$ is a relative Rota--Baxter operator, we have $T(u\circ_T v)\overset{\eqref{eq:rerb1}}{=} Tu\circ Tv=x\circ y$.

On the one hand, we get
\begin{align*}
&\quad (u\circ_{T} v)\circ_{T} w-u\circ_{T}(v\circ_{T} w)\\
&= \rho \bigl(T(u\circ_{T} v)\bigr)w+\varphi(Tw)(u\circ_{T} v)
      -\rho(Tu)(v\circ_{T} w)-\varphi \bigl(T(v\circ_{T} w)\bigr)u \\
&= \rho(x\circ y)w+\varphi(z)\bigl(\rho(x)v+\varphi(y)u\bigr)
      -\rho(x)\bigl(\rho(y)w+\varphi(z)v\bigr)-\varphi(y\circ z)u \\
&= \rho(x\circ y)w+\varphi(z)\rho(x)v+\varphi(z)\varphi(y)u
      -\rho(x)\rho(y)w-\rho(x)\varphi(z)v-\varphi(y\circ z)u \\
&\overset{\eqref{eq:ropl1}\eqref{eq:ropl2}}{=} \rho(y\circ x)w-\rho(y)\rho(x)w
      +\varphi(z)\varphi(x)v-\varphi(x\circ z)v
      +\varphi(z)\rho(y)u-\rho(y)\varphi(z)u \\
&= (v\circ_{T} u)\circ_{T} w-v\circ_{T}(u\circ_{T} w),
\end{align*}
which implies that $(V,\circ_T)$ is a pre-Lie algebra.

On the other hand, we get
\begin{align*}
\alpha(u)\circ_{T}\alpha(v)
&= \rho\bigl(T(\alpha(u))\bigr)\alpha(v)
   +\varphi\bigl(T(\alpha(v))\bigr)\alpha(u) \\
&\overset{\eqref{eq:rerb2}}{=} \rho\bigl(P(Tu)\bigr)\alpha(v)
   +\varphi\bigl(P(Tv)\bigr)\alpha(u) \\
&\overset{\eqref{eq:rapL1}\eqref{eq:rapL2}}{=} \alpha\bigl(\rho(P(Tu))v\bigr)
   +\alpha\bigl(\varphi(Tv)\alpha(u)\bigr) \\
&\overset{\eqref{eq:rerb2}}{=} \alpha\bigl(\rho(T(\alpha(u)))v
   +\varphi(Tv)\alpha(u)\bigr) \\
&= \alpha\bigl(\alpha(u)\circ_{T} v\bigr),
\end{align*}
and similarly $\alpha(u)\circ_{T}\alpha(v) = \alpha\bigl(u\circ_{T} \alpha(v)\bigr)$,
which implies that $\alpha: V \to V$ is an averaging operator on the pre-Lie algebra $(V,\circ_T)$.

Therefore $((V,\circ_T), \alpha)$ is an averaging pre-Lie algebra.
\end{proof}

\begin{coro}
Let $T:V\to A$ be a relative Rota--Baxter operator on an averaging pre-Lie algebra $((A,\circ),P)$ with respect to the representation $\bigl((V,\rho,\varphi),\alpha\bigr)$.
Then $T$ is a homomorphism from the descendent averaging pre-Lie algebra
$((V,\circ_T), \alpha)$ to the averaging pre-Lie algebra $((A,\circ),P)$.
\end{coro}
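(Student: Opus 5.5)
The corollary asks for two things: $T$ is multiplicative from $(V,\circ_T)$ to $(A,\circ)$, and $T$ intertwines the averaging operators $\alpha$ and $P$. Both are already built into the definition of a relative Rota--Baxter operator, so the proof is a direct unpacking. We do need Proposition~\ref{prop:desaverpl} first, so that $((V,\circ_T),\alpha)$ is known to be an averaging pre-Lie algebra and the word ``homomorphism'' makes sense between two objects of the right kind.

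For multiplicativity, I take $u,v\in V$ and apply $T$ to $u\circ_T v$ as defined in Eq.~\eqref{eq:desprelie}:
\[
T(u\circ_T v)=T\bigl(\rho(Tu)v+\varphi(Tv)u\bigr)=T(u)\circ T(v),
\]
where the last equality is exactly Eq.~\eqref{eq:rerb1}.

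For compatibility with the averaging operators, I read Eq.~\eqref{eq:rerb2}, $P\circ T=T\circ\alpha$, pointwise. It gives $T(\alpha(v))=P(T(v))$ for all $v\in V$, which is the second condition in the definition of a homomorphism of averaging pre-Lie algebras.

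No step is a genuine obstacle. The only care needed is bookkeeping: check that the roles of $\rho$ and $\varphi$ in Eq.~\eqref{eq:desprelie} line up with those in Eq.~\eqref{eq:rerb1}, which they do verbatim.
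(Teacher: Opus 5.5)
Your proof is correct and follows the paper's reasoning. The paper states this corollary without a separate proof, as an immediate consequence of Proposition~\ref{prop:desaverpl}: that proof already records $T(u\circ_T v)=Tu\circ Tv$ via Eq.~\eqref{eq:rerb1}, and Eq.~\eqref{eq:rerb2} gives $T\circ\alpha=P\circ T$, exactly as you unpacked it.
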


\begin{prop} \label{prop:mmoaverprelie}
Let $T:V\to A$ be a relative Rota--Baxter operator on an averaging pre-Lie algebra $((A,\circ),P)$ with respect to the representation $\bigl((V,\rho,\varphi),\alpha\bigr)$. Then
\[
\Bigl(
\bigl((A,\circ),P\bigr),\
\bigl((V,\circ_T),\alpha\bigr),\
\rho,\varphi,\rho',\varphi'
\Bigr)
\]
is a matched pair of averaging pre-Lie algebras, where $((V,\circ_T), \alpha)$ is the descendent averaging pre-Lie algebra, the $\circ_T$ is given by Eq.~\eqref{eq:desprelie} and $\rho',\varphi': V \to \mathrm{End}(A)$ are given by
\begin{align*}
\rho'(u)x = - T(\varphi(x)u) + Tu \circ x,\qquad
\varphi'(u)x = - T(\rho(x)u) + x \circ Tu,\quad \forall u \in V, x\in A.
\end{align*}
\end{prop}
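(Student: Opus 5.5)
By Definition~\ref{def:mpoaverprelie}, there are four things to verify. First, $((A,\circ),P)$ and $((V,\circ_T),\alpha)$ are averaging pre-Lie algebras; the first is given and the second is Proposition~\ref{prop:desaverpl}. Second, $((V,\rho,\varphi),\alpha)$ is a representation of $((A,\circ),P)$, which is a hypothesis. Third, $((A,\rho',\varphi'),P)$ must be a representation of $((V,\circ_T),\alpha)$. Fourth, the sextuple must be a matched pair of the underlying pre-Lie algebras.

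For the pre-Lie part (the fourth item, and the pre-Lie half of the third), I would not verify Eqs.~\eqref{eq:mppl1}--\eqref{eq:mppl4} by hand. Instead I would use the graph construction. Consider the semi-direct product $A\ltimes_{\rho,\varphi}V$ from Proposition~\ref{prop:semi-repre}. I expect $\mathrm{id}+T$, or more conveniently the graph-twisted map $x+u\mapsto (x-Tu)+u$, to give an isomorphism of vector spaces $A\oplus V\to A\oplus V$. Transporting the semi-direct product along it should produce the product $\circ_\star$ of Eq.~\eqref{eq:zh1} with $\circ_{\mathfrak b}=\circ_T$ and with $\rho',\varphi'$ as stated. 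The Rota--Baxter identity Eq.~\eqref{eq:rerb1} is exactly what makes the $V$-component of the transported product equal to $\circ_T$. Once this is established, the transported algebra is pre-Lie because the semi-direct product is. Bai's theorem \cite[Theorem~3.5]{Bai08}, in its converse direction, then says that $A$ and $V$ being subalgebras of a pre-Lie structure on $A\oplus V$ of the form \eqref{eq:zh1} forces $(A,\rho',\varphi')$ to be a representation of $(V,\circ_T)$ and Eqs.~\eqref{eq:mppl1}--\eqref{eq:mppl4} to hold.

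The only delicate point in this step is getting the signs right in the change of variables. A quick check: compute $u\circ_\star x$ with $\circ_\star$ read as $\rho'(u)x+\varphi_A(x)u$ and compare it with the image of $(-Tu+u)\circ_\heartsuit x=-Tu\circ x+\varphi(x)u$ under the map. This reproduces $\rho'(u)x=Tu\circ x-T(\varphi(x)u)$ up to the chosen sign convention. If the sign comes out wrong, I would replace $T$ by $-T$ in the twist, or alternatively fall back on a direct check of the four identities using Eqs.~\eqref{eq:ropl1}, \eqref{eq:ropl2} and \eqref{eq:rerb1}.

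The remaining genuinely new content is the averaging compatibility Eqs.~\eqref{eq:rapL1}--\eqref{eq:rapL2} for $\rho',\varphi'$ with $\beta=P$ and the operator $\alpha$ on $V$. For Eq.~\eqref{eq:rapL1} I need
\[
\rho'(\alpha u)P(x)=P\bigl(\rho'(\alpha u)x\bigr)=P\bigl(\rho'(u)P(x)\bigr).
\]
Expanding, $\rho'(\alpha u)P(x)=T\alpha u\circ Px-T(\varphi(Px)\alpha u)$. Using $T\alpha=PT$ (Eq.~\eqref{eq:rerb2}), the first term is $PTu\circ Px$, which equals $P(PTu\circ x)=P(Tu\circ Px)$ by Eq.~\eqref{eq:averop}. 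The second term is $T\varphi(Px)\alpha u=T\alpha(\varphi(Px)u)=PT(\varphi(Px)u)$, and also equals $T\alpha(\varphi(x)\alpha u)=PT(\varphi(x)\alpha u)$ by Eq.~\eqref{eq:rapL2}. Comparing with $P(\rho'(\alpha u)x)=P(PTu\circ x)-PT(\varphi(x)\alpha u)$ and $P(\rho'(u)Px)=P(Tu\circ Px)-PT(\varphi(Px)u)$ gives both equalities. The argument for $\varphi'$ is identical, using Eq.~\eqref{eq:rapL1} in place of Eq.~\eqref{eq:rapL2}.

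With these pieces, all items of Definition~\ref{def:mpoaverprelie} hold. Equivalently, Proposition~\ref{prop:mpopl} then exhibits $(A\bowtie V,P+\alpha)$ as an averaging pre-Lie algebra. I expect the main obstacle to be the sign bookkeeping in the graph isomorphism of the pre-Lie matched-pair step; the averaging verification is routine.
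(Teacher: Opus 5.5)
Your proof is correct. For the pre-Lie part it takes a different route from the paper.

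\textbf{Pre-Lie part.} The paper checks Eqs.~\eqref{eq:ropl1}--\eqref{eq:ropl2} for $\rho',\varphi'$ by direct expansion. It then only asserts that Eqs.~\eqref{eq:mppl1}--\eqref{eq:mppl4} follow ``by substituting'', without showing it. Your transport of $A\ltimes_{\rho,\varphi}V$ exhibits $A$ and the graph $\{Tu+u\}$ as complementary subalgebras. The converse half of Bai's Theorem~3.5, which is just the pre-Lie identity on mixed triples, then gives the representation axioms for $(A,\rho',\varphi')$ and all four compatibility identities at once. This actually closes the step the paper leaves unverified. The only cost is invoking the converse direction of that theorem.

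\textbf{Two corrections to your bookkeeping.}
First, no sign adjustment is needed if you use the right convention. With $\Psi(x+u)=(x-Tu)+u$, set $X\ast Y=\Psi\bigl(\Psi^{-1}X\circ_\heartsuit\Psi^{-1}Y\bigr)$, i.e.\ multiply graph elements $Tu+u$ and then project. This gives
\[
u\ast x=\Psi\bigl(Tu\circ x+\varphi(x)u\bigr)=\rho'(u)x+\varphi(x)u,\qquad x\ast u=\varphi'(u)x+\rho(x)u
\]
on the nose. Your quick check pulls back in the opposite direction, which produces the data attached to $-T$.

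Second, Eq.~\eqref{eq:rerb1} is not what makes the $V$-component of $u\ast v$ equal to $u\circ_T v$; that holds automatically. What \eqref{eq:rerb1} does is kill the $A$-component $Tu\circ Tv-T(u\circ_T v)$ of $u\ast v$. That is, it makes $V$ a subalgebra, so that the product has the form \eqref{eq:zh1}.

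\textbf{Averaging part.} Your verification of Eqs.~\eqref{eq:rapL1}--\eqref{eq:rapL2} for $\rho',\varphi'$ is the same computation as the paper's, which writes out only one of the identities. It can also be absorbed into the transport. By Eq.~\eqref{eq:rerb2}, $\Psi$ commutes with $P+\alpha$. So $P+\alpha$ is an averaging operator for $\ast$ by Proposition~\ref{prop:semi-repre}. The $A$-components of the averaging identities on the pairs $(u,x)$ and $(x,u)$ are exactly the required equations.

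\textbf{Comparison.} The paper's route is explicit but computation-heavy and incomplete at Step~4. Yours is structural and complete.
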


\begin{proof}
Based on Definition~\ref{def:mpoaverprelie}, the proof is divided into the following four steps.

\textbf{Step1:}
$\big((A,\circ),P\big)$ and $\big(V,\circ_T),\alpha \big)$ are averaging pre-Lie algebras.
It follows directly from the Proposition~\ref{prop:desaverpl}.

\textbf{Step2:}
$(\big(A,\rho',\varphi'),P\big)$ is a representation of $\big(V,\circ_T),\alpha \big)$. In other words, we need to prove Eqs.~\eqref{eq:ropl1}-~\eqref{eq:rapL2}.
First, for Eq.~\eqref{eq:ropl1}, we have
\begin{align*}
&\qquad \rho'\bigl(u \circ_T v - v \circ_T u\bigr)x
 - \rho'(u)\rho'(v)x + \rho'(v)\rho'(u)x \\[4pt]
&\overset{\eqref{eq:desprelie}}{=}\;
\rho'\Bigl(
 \rho(Tu)v + \varphi(Tv)u - \rho(Tv)u - \varphi(Tu)v
\Bigr)(x)
+ \rho'(u)\bigl(T(\varphi(x)v) - Tv \circ x\bigr)
- \rho'(v)\bigl(T(\varphi(x)u) - Tu \circ x\bigr) \\[4pt]
&=\;
T\Bigl(
 - \varphi(x)\rho(Tu)v
 - \varphi(x)\varphi(Tv)u
 + \varphi(x)\rho(Tv)u
 + \varphi(x)\varphi(Tu)v
\Bigr)
+ T(u \circ_T v - v \circ_T u)\circ x \\[4pt]
&\quad
- T\bigl(\varphi(T(\varphi(x)v))u\bigr)
+ Tu \circ T(\varphi(x)v)
+ T\bigl(\varphi(Tv \circ x)u\bigr)
- Tu \circ (Tv \circ x) \\[4pt]
&\quad
+ T\bigl(\varphi(T(\varphi(x)u))v\bigr)
- Tv \circ T(\varphi(x)u)
- T\bigl(\varphi(Tu\circ x)v\bigr)
+ Tv \circ (Tu\circ x) \\[6pt]
&\overset{\eqref{eq:desprelie}}{=}\;
T\Bigl(
 \underline{\color{red}{- \varphi(x)\rho(Tu)v}}
 - \underline{\color{blue}{\varphi(x)\varphi(Tv)u}}
 + \underline{\color{blue}{\varphi(x)\rho(Tv)u}}
 + \underline{\color{red}{\varphi(x)\varphi(Tu)v}}
\Bigr)
+ (Tu \circ Tv) \circ x
- (Tv \circ Tu) \circ x \\[4pt]
&\quad
+ \underline{\color{red}{T\bigl(\rho(Tu)\varphi(x)v\bigr)}}
+ \underline{\color{blue}{T\bigl(\varphi(Tv\circ x)u\bigr)}}
- Tu\circ (Tv\circ x) \\[4pt]
&\quad
- \underline{\color{blue}{T\bigl(\rho(Tv)\varphi(x)u\bigr)}}
- \underline{\color{red}{T\bigl(\varphi(Tu\circ x)v\bigr)}}
+ Tv\circ (Tu\circ x) \\[6pt]
&\overset{\eqref{eq:preLie} \eqref{eq:ropl2}}{=} \; 0.
\end{align*}
Next, for Eq.~\eqref{eq:ropl2}, we have
\begin{align*}
&\qquad \varphi'(u\circ_T v)x
 - \rho'(u)\bigl(\varphi'(v)x\bigr)
 + \varphi'(v)\bigl(\rho'(u)x\bigr)
 - \varphi'(v)\bigl(\varphi'(u)x\bigr) \\[4pt]
&\overset{\eqref{eq:desprelie}}{=}\;
- T\bigl(\rho(x)(u\circ_T v)\bigr)
+ x \circ T(u\circ_T v)
- \rho'(u)\bigl(-T(\rho(x)v)+x \circ Tv\bigr)
+ \varphi'(v)\bigl(-T(\varphi(x)u)+Tu \circ x\bigr)\\
&\quad
- \varphi'(v)\bigl(-T(\rho(x)u)+x \circ Tu\bigr) \\[6pt]
&\overset{\eqref{eq:desprelie}}{=}\;
- T\bigl(\rho(x)\rho(Tu)v + \rho(x)\varphi(Tv)u\bigr)
+ x\circ(Tu \circ Tv) \\
&\quad
- T\bigl(\varphi(T(\rho(x)v))u\bigr)
+ Tu\circ T(\rho(x)v)
+ T\bigl(\varphi(x\circ Tv)u\bigr)
- Tu\circ(x\circ Tv)  \\
&\quad
+ T\bigl(\rho(T(\varphi(x)u))v\bigr)
- T(\varphi(x)u)\circ Tv
- T\bigl(\rho(Tu \circ x)v\bigr)
+ (Tu\circ x)\circ Tv \\
&\quad
- T\bigl(\rho(T(\rho(x)u))v\bigr)
+ T(\rho(x)u)\circ Tv
+ T\bigl(\rho(x \circ Tu)v\bigr)
- (x\circ Tu)\circ Tv \\[6pt]
&=\;
- T\bigl(\rho(x)\rho(Tu)v + \rho(x)\varphi(Tv)u\bigr)
+ T\bigl(\rho(Tu)\rho(x)v\bigr)
+ T\bigl(\varphi(x\circ Tv)u\bigr)
- T\bigl(\varphi(Tv)\varphi(x)u\bigr) \\
&\quad
- T\bigl(\rho(Tu\circ x)v\bigr)
+ T\bigl(\varphi(Tv)\rho(x)u\bigr)
+ T\bigl(\rho(x\circ Tu)v\bigr) \\[6pt]
&\overset{\eqref{eq:preLie} \eqref{eq:ropl2}}{=}\; 0 .
\end{align*}
Finally, for Eq.~\eqref{eq:rapL1} and Eq.~\eqref{eq:rapL2}, we only prove $\rho'(\alpha(u))P(x) =  P\bigl(\rho'(\alpha(u))\,x\bigr)$ and the others are similar. Indeed,
\begin{align*}
\rho'(\alpha(u))P(x)
&= -T\bigl(\varphi(P(x))\alpha(u)\bigr)
   + T(\alpha(u))\circ P(x) \\[4pt]
&\overset{\eqref{eq:rapL2}}{=} -T\bigl(\alpha(\varphi(x)\alpha(u))\bigr)
   + P(Tu)\circ P(x) \\[4pt]
&\overset{\eqref{eq:averop}}{=} -P\bigl(T(\varphi(x)\alpha(u))\bigr)
   + P\bigl(P(Tu)\circ x\bigr) \\[4pt]
&= P\bigl(-T(\varphi(x)\alpha(u)) + T(\alpha(u))\circ x\bigr) \\[4pt]
&= P\bigl(\rho'(\alpha(u))\,x\bigr).
\end{align*}

\textbf{Step3:}
$\bigl((V,\rho,\varphi),\alpha\bigr)$ is a representation of $\big((A,\circ),P\big)$.
This is exactly the hypothesis that $\bigl((V,\rho,\varphi),\alpha\bigr)$
is a representation of $((A,\circ),P)$.

\textbf{Step4:}
$\big((A,\circ),(V,\circ_T),\rho,\varphi,\rho',\varphi'\big)$
is a matched pair of the pre-Lie algebras $(A,\circ)$ and $(V,\circ_T)$.

Substituting the above actions into
Eqs.~\eqref{eq:mppl1}--~\eqref{eq:mppl4} and using again
$T(u\circ_T v)=Tu\circ Tv$ and Eqs.~\eqref{eq:ropl1}--~\eqref{eq:ropl2},
we obtain that $\bigl((A,\circ),(V,\circ_T),\rho,\varphi,\rho',\varphi'\bigr)$
is a matched pair of pre-Lie algebras.

Therefore the sextuple
$\bigl(((A,\circ),P),((V,\circ_T),\alpha),\rho,\varphi,\rho',\varphi'\bigr)$
is a matched pair of averaging pre-Lie algebras.
\end{proof}

\begin{prop}\label{prop:rRBclYB}
Let $((A,\circ),P)$ be an averaging pre-Lie algebra. Suppose that $r \in A\otimes A$ is symmetric and $S: A \to A$ is a linear map.
If $r^{\sharp}\colon A^* \to A$ is a relative Rota--Baxter operator on $((A,\circ),P)$ with respect to the
representation $((A^*,L^* - R^*,-R^*), S^*)$, then $r$ is a solution of the $S$-admissible classical Yang-Baxter equation in the averaging pre-Lie algebra $((A,\circ),P)$.
\end{prop}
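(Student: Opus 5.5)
The $S$-admissible classical Yang--Baxter equation consists of three conditions: the $\mathbb{S}$-equation \eqref{eq:cond0}, Eq.~\eqref{eq:cond1} and Eq.~\eqref{eq:cond2}. Since $r$ is symmetric, Remark~\ref{remark:rsym} shows that \eqref{eq:cond1} and \eqref{eq:cond2} are equivalent. So I need only two things: that \eqref{eq:rerb1} for $T=r^\sharp$ gives \eqref{eq:cond0}, and that \eqref{eq:rerb2} gives \eqref{eq:cond1} (or \eqref{eq:cond2}).

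\textbf{The averaging part.} Write $r=\sum_i a_i\otimes b_i$ and $r^\sharp(\xi)=\sum_i\langle\xi,a_i\rangle b_i$. By symmetry of $r$ it does not matter which leg is paired. Condition \eqref{eq:rerb2} reads $P\circ r^\sharp=r^\sharp\circ S^*$. Evaluate both sides at $\xi$ and pair with $\eta\in A^*$:
\[
\langle P r^\sharp(\xi),\eta\rangle=\langle \xi\otimes\eta,(\mathrm{id}\otimes P)r\rangle,\qquad \langle r^\sharp(S^*\xi),\eta\rangle=\langle \xi\otimes\eta,(S\otimes\mathrm{id})r\rangle .
\]
Since $\xi,\eta$ are arbitrary, this is exactly $(S\otimes\mathrm{id}-\mathrm{id}\otimes P)(r)=0$, i.e.\ \eqref{eq:cond1}. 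Remark~\ref{remark:rsym} then gives \eqref{eq:cond2}.

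\textbf{The $\mathbb{S}$-equation.} This is the classical fact for pre-Lie algebras (Bai) that, for symmetric $r$, $r^\sharp$ is an $\mathcal{O}$-operator for the representation $(A^*,L^*-R^*,-R^*)$ if and only if $r$ satisfies $[[r,r]]=0$. I would either cite it directly or verify it as follows. Pair \eqref{eq:rerb1}, written as
\[
r^\sharp(\xi)\circ r^\sharp(\eta)=r^\sharp\bigl((L^*-R^*)(r^\sharp\xi)\eta-R^*(r^\sharp\eta)\xi\bigr),
\]
with a third functional $\zeta$. Then:
\begin{itemize}
\item The left side equals $\sum_{i,j}\langle\xi,a_i\rangle\langle\eta,a_j\rangle\langle\zeta,b_i\circ b_j\rangle$.
\item After moving the dual actions across the pairing via \eqref{eq:dual} and using symmetry of $r$, the right side becomes a sum of three terms of the form $\langle \xi\otimes\eta\otimes\zeta,\ \cdot\ \rangle$ applied to the tensors $a_i\otimes a_j\circ b_i\otimes b_j$, $b_i\circ a_j\otimes a_i\otimes b_j$ and their mirror terms.
\end{itemize}
The result is $\langle \xi\otimes\eta\otimes\zeta,\sigma[[r,r]]\rangle=0$ for a fixed permutation $\sigma$ of the tensor legs. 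Since $\xi,\eta,\zeta$ are arbitrary, this yields \eqref{eq:cond0}.

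The main obstacle is this index bookkeeping: matching the signs from $-\langle v^*,\rho(x)v\rangle$, relabelling $i\leftrightarrow j$, and swapping $a_i\leftrightarrow b_i$ using symmetry, so that the four sums align with the two sides of \eqref{eq:cond0} up to a leg permutation. To keep it manageable I would first rewrite $r$ as $\sum_i a_i\otimes b_i=\sum_i b_i\otimes a_i$, and then group terms by which element carries the product. The averaging part is routine.
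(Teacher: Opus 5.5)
Your proposal is correct and follows essentially the same route as the paper: the $\mathbb{S}$-equation comes from Eq.~\eqref{eq:rerb1} via Bai's characterization of symmetric solutions as relative Rota--Baxter operators with respect to $(A^*,L^*-R^*,-R^*)$, the condition $P\circ r^{\sharp}=r^{\sharp}\circ S^*$ unpacks to Eq.~\eqref{eq:cond1}, and the symmetry of $r$ then gives Eq.~\eqref{eq:cond2}. Your optional direct check of the $\mathbb{S}$-equation also goes through, and the fixed permutation $\sigma$ turns out to be the transposition of the second and third tensor factors.
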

\begin{proof}
Since $r^{\sharp}\colon A^* \to A$ is a relative Rota-Baxter operator on $((A,\circ),P)$ with respect to the
representation $((A^*,L^* - R^*,-R^*), S^*)$, i.e.
\begin{equation}
r^{\sharp}(a^{*})\circ r^{\sharp}(b^{*})
=
r^{\sharp}\Bigl((L^{*}-R^{*})(r^{\sharp}(a^{*}))(b^{*})
- R^{*}(r^{\sharp}(b^{*}))(a^{*})\Bigr), \mlabel{eq:sharp1}
\end{equation}
and
\begin{equation}
P \circ r^{\sharp} = r^{\sharp} \circ S^*.  \mlabel{eq:sharp2}
\end{equation}
Then first by \cite[Theorem~6.6]{Bai08}, we know that $r$ is a solution of the $\mathbb{S}$-equation if and only if ~\eqref{eq:sharp1} holds.

Next, for all $x \in A$, $a^* \in A^*$, one gets
\begin{align*}
P\bigl(r^{\sharp}(a^{*})\bigr)
=
P\left(\sum_{i=1}^{n}\langle a^{*},a_i\rangle b_i\right)
=
\sum_{i=1}^{n}\langle a^{*},a_i\rangle P(b_i),\\
r^{\sharp}\bigl(S^{*}(a^{*})\bigr)
=
\sum_{i=1}^{n}\langle S^{*}(a^{*}),a_i\rangle b_i
=
\sum_{i=1}^{n}\langle a^{*},S(a_i)\rangle b_i.
\end{align*}
So Eq.~\eqref{eq:sharp2} holds if and only if Eq.~\eqref{eq:cond1} holds.
Since $r$ is symmetric, Eq.~\eqref{eq:cond2} also holds.
Then $r$ is a solution of the $S$-admissible classical Yang-Baxter equation in the averaging pre-Lie algebra $((A,\circ),P)$.
This completes the proof.
\end{proof}

In a quadratic averaging pre-Lie algebra, relative Rota-Baxter operators with respect to the coregular representation are equivalent to Rota-Baxter operators of weight 0.

\begin{prop}\label{prop:rRBandRB}
Let $\bigl((A,\circ), P, \omega\bigr)$ be a quadratic averaging pre-Lie algebra and $T \colon A^* \to A$ a linear map.
Then $T$ is a relative Rota--Baxter operator on $\bigl((A,\circ), P\bigr)$ with respect to the coregular representation
$\bigl( (A^*, L^*-R^*, -R^*), P^*)$ if and only if $T \circ \omega^{\sharp}$ is a Rota--Baxter operator of weight $0$ on the averaging pre-Lie algebra $\bigl((A,\circ), P\bigr)$.
\end{prop}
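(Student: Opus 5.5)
The plan is to transport everything along the isomorphism $\omega^\sharp$ of Theorem~\ref{theorem:omegasharp}. Set $\widetilde{T}:=T\circ\omega^\sharp\colon A\to A$. Since $\omega$ is nondegenerate, $\omega^\sharp$ is invertible, so $T=\widetilde T\circ(\omega^\sharp)^{-1}$. Every $u,v\in A^*$ can therefore be written uniquely as $u=\omega^\sharp(x)$ and $v=\omega^\sharp(y)$ with $x,y\in A$. It then suffices to show two things: Eq.~\eqref{eq:rerb1} for $T$ (with $\rho=L^*-R^*$, $\varphi=-R^*$) is equivalent to Eq.~\eqref{eq:rerb1} for $\widetilde T$ (with $\rho=L$, $\varphi=R$), and Eq.~\eqref{eq:rerb2} for $T$ (with $\alpha=P^*$) is equivalent to Eq.~\eqref{eq:rerb2} for $\widetilde T$ (with $\alpha=P$).

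For the first equation, I use the intertwining identities proved in Theorem~\ref{theorem:omegasharp}:
\[
(L^*-R^*)(z)\,\omega^\sharp(y)=\omega^\sharp(z\circ y),\qquad -R^*(z)\,\omega^\sharp(x)=\omega^\sharp(x\circ z),\qquad \forall\,x,y,z\in A .
\]
Substituting $u=\omega^\sharp(x)$ and $v=\omega^\sharp(y)$, the right-hand side of Eq.~\eqref{eq:rerb1} for $T$ becomes
\[
T\bigl((L^*-R^*)(\widetilde T x)\,\omega^\sharp(y)-R^*(\widetilde T y)\,\omega^\sharp(x)\bigr)
=T\,\omega^\sharp\bigl(\widetilde T x\circ y+x\circ \widetilde T y\bigr)
=\widetilde T\bigl(\widetilde T x\circ y+x\circ\widetilde T y\bigr).
\]
The left-hand side is $\widetilde Tx\circ\widetilde Ty$. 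So the first equation for $T$ holds for all $u,v$ if and only if $\widetilde T$ satisfies the weight-$0$ Rota--Baxter identity for all $x,y$, which is exactly Eq.~\eqref{eq:rerb1} for the regular representation $(A,L,R)$.

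For the second equation, the third identity in the proof of Theorem~\ref{theorem:omegasharp}, which comes from Eq.~\eqref{eq:quavecomp}, gives $P^*\circ\omega^\sharp=\omega^\sharp\circ P$. Hence
\[
T\circ P^*\circ\omega^\sharp=T\circ\omega^\sharp\circ P=\widetilde T\circ P .
\]
Composing on the right with the invertible map $\omega^\sharp$ shows that $P\circ T=T\circ P^*$ holds if and only if $P\circ\widetilde T=\widetilde T\circ P$, which is Eq.~\eqref{eq:rerb2} for the regular representation $\bigl((A,L,R),P\bigr)$. Together with the first step, this yields the claimed equivalence.

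The only delicate point is the sign bookkeeping in the second intertwining identity, $-R^*(z)\,\omega^\sharp(x)=\omega^\sharp(R(z)x)$, because it relies on skew-symmetry together with invariance. However, that identity is already established in the proof of Theorem~\ref{theorem:omegasharp}, so I will cite it rather than recompute it. The remaining verifications are direct substitution.
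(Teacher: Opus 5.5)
Your proposal is correct and follows essentially the same route as the paper's proof. Both substitute $u=\omega^\sharp(x)$ and $v=\omega^\sharp(y)$ and use the intertwining identities of Theorem~\ref{theorem:omegasharp} to turn the relative Rota--Baxter identity into the weight-$0$ identity for $T\circ\omega^\sharp$, and both handle the condition $P\circ T=T\circ P^*$ using $P^*\circ\omega^\sharp=\omega^\sharp\circ P$ and the invertibility of $\omega^\sharp$.
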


\begin{proof}
For all $x, y \in A$, by Theorem ~\ref{theorem:omegasharp}, we have
\begin{align*}
T\bigl(\omega^{\sharp}(x)\bigr)\circ T\bigl(\omega^{\sharp}(y)\bigr)
&=T\Bigl((L^{*}-R^{*})\bigl(T(\omega^{\sharp}x)\bigr)\bigl(\omega^{\sharp}y\bigr)
   -R^{*}\bigl(T(\omega^{\sharp}y)\bigr)\bigl(\omega^{\sharp}x\bigr)\Bigr)\\
&=T\Bigl(\omega^{\sharp}\bigl(L(T(\omega^{\sharp}x))\,y\bigr)+\omega^{\sharp}\bigl(R(T(\omega^{\sharp}y))\,x\bigr)\Bigr)\\
&=T\Bigl(\omega^{\sharp}\bigl(T(\omega^{\sharp}x)\circ y\bigr)+\omega^{\sharp}\bigl(x\circ T(\omega^{\sharp}y)\bigr)\Bigr),
\end{align*}
which implies that $T$ is a relative Rota--Baxter operator if and only if $T \circ \omega^{\sharp}$ is a Rota--Baxter operator of weight 0 on the pre-Lie algebra $(A, \circ)$.

On the other hand, $P \circ (T \circ \omega^{\sharp}) = (T \circ \omega^{\sharp}) \circ P$
if and only if $(P \circ T) \circ \omega^{\sharp} = (T \circ P^{*}) \circ \omega^{\sharp},$
which is equivalent to $P \circ T = T \circ P^{*}$ due to the fact that the bilinear form $\omega$ is nondegenerate.

Thus, $T$ is a relative Rota--Baxter operator if and only if $T \circ \omega^{\sharp}$ is a Rota--Baxter operator of weight $0$ on the
averaging pre-Lie algebra $\bigl((A,\circ), P\bigr)$.
\end{proof}

\begin{coro}
Let $\bigl((A,\circ), P, \omega\bigr)$ be a quadratic averaging pre-Lie algebra. Suppose that $r \in A \otimes A$ is symmetric.
If $r^{\sharp} \circ \omega^{\sharp}$ is a Rota--Baxter operator of weight $0$ on the averaging pre-Lie algebra $\bigl((A,\circ), P\bigr)$, then $r$ is a solution of the classical Yang-Baxter equation in the averaging pre-Lie algebra $\bigl((A,\circ), P\bigr)$.
\end{coro}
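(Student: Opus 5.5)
The plan is to chain Proposition~\ref{prop:rRBandRB} with Proposition~\ref{prop:rRBclYB}, taking $S=P$. First I read off the precise target. Since $\bigl((A,\circ),P,\omega\bigr)$ is quadratic, Theorem~\ref{theorem:omegasharp} says the coregular representation is $\bigl((A^*,L^*-R^*,-R^*),P^*\bigr)$. So "the classical Yang-Baxter equation in the averaging pre-Lie algebra" should mean the $P$-admissible one: Eq.~\eqref{eq:cond0} together with Eqs.~\eqref{eq:cond1}--\eqref{eq:cond2} for $S=P$. Note also that $((A,\circ),P)$ is automatically $P$-admissible, because Eqs.~\eqref{eq:regu1}--\eqref{eq:regu2} with $S=P$ are exactly the averaging identities \eqref{eq:averop}. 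Hence no extra hypothesis is needed for Proposition~\ref{prop:rRBclYB}.

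Next, apply Proposition~\ref{prop:rRBandRB} with $T:=r^{\sharp}\colon A^*\to A$. By hypothesis $r^{\sharp}\circ\omega^{\sharp}$ is a Rota--Baxter operator of weight $0$ on $\bigl((A,\circ),P\bigr)$. That means it is a relative Rota--Baxter operator for the regular representation $\bigl((A,L,R),P\bigr)$, i.e. it satisfies Eq.~\eqref{eq:rerb1} with $\rho=L$, $\varphi=R$, and commutes with $P$ by Eq.~\eqref{eq:rerb2}. The "if" direction of Proposition~\ref{prop:rRBandRB} then gives that $r^{\sharp}$ is a relative Rota--Baxter operator on $\bigl((A,\circ),P\bigr)$ with respect to $\bigl((A^*,L^*-R^*,-R^*),P^*\bigr)$.

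Finally, since $r$ is symmetric, Proposition~\ref{prop:rRBclYB} with $S=P$ yields three things:
\begin{itemize}
\item $r$ satisfies the $\mathbb{S}$-equation \eqref{eq:cond0};
\item $(P\otimes\mathrm{id})r=(\mathrm{id}\otimes P)r$, which is Eq.~\eqref{eq:cond1}, and by symmetry (Remark~\ref{remark:rsym}) also Eq.~\eqref{eq:cond2};
\item hence $r$ is a solution of the $P$-admissible classical Yang-Baxter equation in $\bigl((A,\circ),P\bigr)$.
\end{itemize}

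The main point needing care is consistency of conventions between the two propositions. Proposition~\ref{prop:rRBclYB} uses $r^{\sharp}(a^*)=\sum_i\langle a^*,a_i\rangle b_i$, and for symmetric $r$ this agrees with $r_+$ and $r_-$. In Proposition~\ref{prop:rRBandRB}, the identity $\omega^{\sharp}\circ P=P^*\circ\omega^{\sharp}$ from Theorem~\ref{theorem:omegasharp} is what converts $P\circ(T\circ\omega^{\sharp})=(T\circ\omega^{\sharp})\circ P$ into $P\circ T=T\circ P^*$. I would check both briefly. Beyond that, the argument is a direct composition of the two earlier results.
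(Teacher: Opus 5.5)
Your proposal is correct and is essentially the paper's argument. The paper's proof is the one-line composition of Proposition~\ref{prop:rRBandRB} (with $T=r^{\sharp}$) and Proposition~\ref{prop:rRBclYB} (with $S=P$, so that the representation is the coregular one), which is exactly your chain.

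One small simplification: for $S=P$, Eqs.~\eqref{eq:cond1} and \eqref{eq:cond2} are literally the same equation, so you do not need symmetry to get \eqref{eq:cond2}. Symmetry of $r$ is still needed inside Proposition~\ref{prop:rRBclYB} to obtain the $\mathbb{S}$-equation.
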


\begin{proof}
It follows directly from Proposition~\ref{prop:rRBclYB} and Proposition~\ref{prop:rRBandRB}.
\end{proof}

\begin{theorem}\label{theorem:equiva3}
Let $\bigl((A,\circ),P\bigr)$ be an averaging pre-Lie algebra, $(V,\rho,\varphi)$ a representation of $(A,\circ)$,
$S\colon A \to A$ and $\alpha,\beta \colon V \to V$ linear maps.
Then the following conditions are equivalent:
\begin{enumerate}
  \item There is an averaging pre-Lie algebra
  $
  \bigl(A \ltimes_{\rho,\varphi} V,\; P+\alpha\bigr)
  $
  such that the linear map $S+\beta$ on $A\oplus V$
  is admissible to $\bigl(A \ltimes_{\rho,\varphi} V,\; P+\alpha\bigr)$.  \mlabel{item:a}

  \item There is an averaging pre-Lie algebra
  $
  \bigl(A \ltimes_{\rho^*-\varphi^*,\, -\varphi^*} V^*,\; P+\beta^*\bigr)
  $
  such that the linear map $S+\alpha^*$ on $A\oplus V^*$
  is admissible to
  $
  \bigl(A \ltimes_{\rho^*-\varphi^*,\, -\varphi^*} V^*,\; N+\beta^*\bigr).
  $  \mlabel{item:b}

  \item The following conditions are satisfied:
  \begin{enumerate}
    \item $(V,\rho,\varphi,\alpha)$ is a representation of $\bigl((A,\circ),P\bigr)$; \mlabel{item:c1}
    \item $S$ is admissible to $\bigl((A,\circ),P\bigr)$;\mlabel{item:c2}
    \item $\beta$ is admissible to $\bigl((A,\circ),P\bigr)$ on $(V,\rho,\varphi)$;\mlabel{item:c3}
    \item For all $x\in A$ and $v\in V$, the following equations hold:
    \begin{align}
      \beta\bigl(\rho(x)\alpha(v)\bigr)
      =
      \rho\bigl(S(x)\bigr)\alpha(v)
      =
      \beta\bigl(\rho(S(x))v\bigr), \mlabel{eq:c41} \\
      \beta\bigl(\varphi(x)\alpha(v)\bigr)
      =
      \varphi\bigl(S(x)\bigr)\alpha(v)
      =
      \beta\bigl(\varphi(S(x))v\bigr). \mlabel{eq:c42}
    \end{align}
  \end{enumerate}  \mlabel{item:c}
\end{enumerate}
\end{theorem}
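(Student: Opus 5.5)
The plan is to reduce all three conditions to explicit identities on $A\oplus V$, split by component, and then compare them. ``Admissible'' here means $S$-admissible in the sense of Eqs.~\eqref{eq:regu1}--\eqref{eq:regu2}, applied to the semi-direct product algebra. The letter $N$ in item~\eqref{item:b} is taken to be $P$.

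\textbf{Step 1: (a) $\Leftrightarrow$ (c).} By Proposition~\ref{prop:semi-repre}, $\bigl(A\ltimes_{\rho,\varphi}V,P+\alpha\bigr)$ is an averaging pre-Lie algebra iff (c1) holds. Next, write out Eq.~\eqref{eq:regu1} for $\mathcal P=P+\alpha$ and $\mathcal S=S+\beta$ on elements $x+u,\ y+v$, using the product \eqref{eq:sdp}:
\[
\mathcal P(x+u)\circ_\heartsuit \mathcal S(y+v)=\mathcal S\bigl(\mathcal P(x+u)\circ_\heartsuit(y+v)\bigr)=\mathcal S\bigl((x+u)\circ_\heartsuit \mathcal S(y+v)\bigr).
\]
The $A$-component gives exactly Eq.~\eqref{eq:regu1} for $(P,S)$. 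The $V$-component, after separating the $v$-terms from the $u$-terms, gives
\[
\rho(P(x))\beta(v)=\beta(\rho(P(x))v)=\beta(\rho(x)\beta(v))
\]
(this is \eqref{eq:beta1}) and
\[
\varphi(S(y))\alpha(u)=\beta(\varphi(y)\alpha(u))=\beta(\varphi(S(y))u)
\]
(this is \eqref{eq:c42}). In the same way, Eq.~\eqref{eq:regu2} splits into Eq.~\eqref{eq:regu2} for $(P,S)$, Eq.~\eqref{eq:beta2}, and Eq.~\eqref{eq:c41}. Hence (a) is equivalent to (c1)--(c4), using Corollary~\ref{coro:admiss} for (c2) and the definition of $\beta$-admissibility for (c3).

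\textbf{Step 2: (b) $\Leftrightarrow$ (c).} Apply the same procedure to the representation $(V^*,\rho^*-\varphi^*,-\varphi^*)$ with the operators $\beta^*$ and $\alpha^*$ in place of $\alpha$ and $\beta$. By Step~1, (b) is equivalent to the following four conditions:
\begin{itemize}
\item[(i)] $\bigl((V^*,\rho^*-\varphi^*,-\varphi^*),\beta^*\bigr)$ is a representation of $((A,\circ),P)$;
\item[(ii)] $S$ is admissible;
\item[(iii)] $\alpha^*$ is admissible on $V^*$;
\item[(iv)] the analogues of \eqref{eq:c41}--\eqref{eq:c42} hold with $\rho^*-\varphi^*$, $-\varphi^*$, $\beta^*$, $\alpha^*$.
\end{itemize}
Proposition~\ref{prop:pplre} shows that (i) is equivalent to $\beta$ satisfying \eqref{eq:beta1}--\eqref{eq:beta2}, i.e. (c3). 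The same dualization computation as in that proof shows that (iii), namely $\alpha^*$ being admissible for $(\rho^*-\varphi^*,-\varphi^*)$, is equivalent to $\alpha$ satisfying \eqref{eq:rapL1}--\eqref{eq:rapL2}, i.e. (c1).

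\textbf{Step 3: the crossed conditions (main obstacle).} The remaining task is to show that (iv) is equivalent to \eqref{eq:c41}--\eqref{eq:c42}. I would pair each identity with $v\in V$ and transpose. For example, $\langle -\varphi^*(S(x))\beta^*(\xi),v\rangle=\langle\xi,\beta(\varphi(S(x))v)\rangle$, and the $-\varphi^*$ version of (iv) dualizes to
\[
\alpha(\varphi(S x)v)=\varphi(Sx)\beta(v)\ \ (\text{up to order})=\alpha(\varphi(x)\beta(v)).
\]
This is where care is needed. Dualization reverses composition and swaps the roles of $\alpha$ and $\beta$, so I must check that the resulting three-term chain reorders to exactly \eqref{eq:c42}, possibly after using (c1) and (c3). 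Then the $\rho^*-\varphi^*$ version minus the $-\varphi^*$ version should yield \eqref{eq:c41}, by the same subtraction trick used at the end of the proof of Proposition~\ref{prop:pplre}. If the chains do not match literally, I would expect to need the already established identities \eqref{eq:rapL1}--\eqref{eq:beta2} to rewrite the middle terms. Combining Steps~1--3 gives (a) $\Leftrightarrow$ (c) $\Leftrightarrow$ (b).
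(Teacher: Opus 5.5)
Your Steps 1 and 2 coincide with the paper's proof. You split the shifted Eqs.~\eqref{eq:regu1}--\eqref{eq:regu2} on $A\ltimes_{\rho,\varphi}V$ by components, then rerun this for $(V^*,\rho^*-\varphi^*,-\varphi^*)$ with $\beta^*,\alpha^*$ in the roles of $\alpha,\beta$. Proposition~\ref{prop:pplre} gives (i)$\Leftrightarrow$(c3), and the same transposition gives (iii)$\Leftrightarrow$(c1).

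The gap is Step 3. It is the only remaining content, and you leave it open with an incorrect sample computation. Transposition reverses the order of composition, and that reversal is the only interchange of $\alpha$ and $\beta$ that occurs; you have effectively swapped them a second time. For $\xi\in V^*$ and $v\in V$,
\begin{gather*}
\langle\alpha^*(-\varphi^*(x)\beta^*\xi),v\rangle=\langle\xi,\beta(\varphi(x)\alpha(v))\rangle,\qquad
\langle-\varphi^*(S(x))\beta^*\xi,v\rangle=\langle\xi,\beta(\varphi(S(x))v)\rangle,\\
\langle\alpha^*(-\varphi^*(S(x))\xi),v\rangle=\langle\xi,\varphi(S(x))\alpha(v)\rangle.
\end{gather*}
So the $-\varphi^*$ part of (iv) is literally the chain \eqref{eq:c42}, with no appeal to (c1) or (c3). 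The same computation turns the $(\rho^*-\varphi^*)$ part into $\beta((\rho-\varphi)(x)\alpha(v))=(\rho-\varphi)(S(x))\alpha(v)=\beta((\rho-\varphi)(S(x))v)$. Adding \eqref{eq:c42} termwise gives \eqref{eq:c41}, and subtracting goes back. This is exactly the paper's correspondence table, and with this correction your proof is the paper's.

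Your fallback would not have rescued the computation as you wrote it. The chain $\alpha\varphi(S(x))=\varphi(S(x))\beta=\alpha\varphi(x)\beta$ is \eqref{eq:c42} with $\alpha$ and $\beta$ interchanged, and it does not follow from (c1)--(c4). Take $A=\mathbf{k}$ with zero product, $P=0$, $S=\mathrm{id}$, $V=\mathbf{k}^2$, $\rho=0$, $\varphi(x)=xE_{12}$, $\alpha=E_{11}$, $\beta=0$, where $E_{ij}$ are the matrix units. Every condition in (c) holds, yet $\alpha\varphi(S(1))=E_{12}\neq 0=\varphi(S(1))\beta$. Following your displayed formula would therefore have led either to a dead end or to an apparent failure of (c)$\Rightarrow$(b).
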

\begin{proof}
\textbf{Step1:$~\eqref{item:a} \iff ~\eqref{item:c}$.}

By Proposition ~\ref{prop:semi-repre}, $\bigl(A \ltimes_{\rho,\varphi} V,\; P+\alpha\bigr)$ is an averaging pre-Lie algebra if and only if $(V,\rho,\varphi,\alpha)$ is a representation of $\bigl((A,\circ),P\bigr)$. To prove the equivalence of the remaining parts, we need to carry out the following small computations.

For $x,y \in A$ and $u,v \in V$, we have
\begin{align*}
(P+\alpha)(x+u)\circ_{\heartsuit}(S+\beta)(y+v)
&=(P(x)+\alpha(u))\circ_{\heartsuit}\bigl(S(y)+\beta(v)\bigr)\\
&=P(x)\circ S(y)+\rho \bigl(P(x)\bigr)\beta(v)+\varphi \bigl(S(y)\bigr)\alpha(u),\\
(S+\beta)\Bigl((P+\alpha)(x+u)\circ_{\heartsuit}(y+v)\Bigr)
&=(S+\beta)\Bigl((P(x)+\alpha(u))\circ_{\heartsuit}(y+v)\Bigr)\\
&=(S+\beta)\Bigl(P(x)\circ y+\rho\bigl(P(x)\bigr)v+\varphi(y)\alpha(u)\Bigr)\\
&=S\bigl(P(x)\circ y\bigr)+\beta\Bigl(\rho\bigl(P(x)\bigr)v+\varphi(y)\alpha(u)\Bigr),\\
(S+\beta)\Bigl((x+u)\circ_{\heartsuit}(S+\beta)(y+v)\Bigr)
&=(S+\beta)\Bigl((x+u)\circ_{\heartsuit}\bigl(S(y)+\beta(v)\bigr)\Bigr)\\
&=(S+\beta)\Bigl(x\circ S(y)+\rho(x)\beta(v)+\varphi\bigl(S(y)\bigr)u\Bigr)\\
&=S\bigl(x\circ S(y)\bigr)+\beta\Bigl(\rho(x)\beta(v)+\varphi\bigl(S(y)\bigr)u\Bigr),
\end{align*}
\begin{align*}
(S+\beta)(x+u)\circ_{\heartsuit}(P+\alpha)(y+v)
&=\bigl(S(x)+\beta(u)\bigr)\circ_{\heartsuit}\bigl(P(y)+\alpha(v)\bigr)\\
&=S(x)\circ P(y)+\rho\bigl(S(x)\bigr)\alpha(v)+\varphi\bigl(P(y)\bigr)\beta(u),\\
(S+\beta)\Bigl((x+u)\circ_{\heartsuit}(P+\alpha)(y+v)\Bigr)
&=(S+\beta)\Bigl((x+u)\circ_{\heartsuit}\bigl(P(y)+\alpha(v)\bigr)\Bigr)\\
&=(S+\beta)\Bigl(x\circ P(y)+\rho(x)\alpha(v)+\varphi\bigl(P(y)\bigr)u\Bigr)\\
&=S\bigl(x\circ P(y)\bigr)+\beta\Bigl(\rho(x)\alpha(v)+\varphi\bigl(P(y)\bigr)u\Bigr),\\
(S+\beta)\Bigl((S+\beta)(x+u)\circ_{\heartsuit}(y+v)\Bigr)
&=(S+\beta)\Bigl(\bigl(S(x)+\beta(u)\bigr)\circ_{\heartsuit}(y+v)\Bigr)\\
&=(S+\beta)\Bigl(S(x)\circ y+\rho\bigl(S(x)\bigr)v+\varphi(y)\beta(u)\Bigr)\\
&=S\bigl(S(x)\circ y\bigr)+\beta\Bigl(\rho\bigl(S(x)\bigr)v+\varphi(y)\beta(u)\Bigr).
\end{align*}
Therefore, based on the equivalence correspondence shown in the Table~\ref{table:aandc} below, the equivalence of the remaining parts in this step follows.
\begin{table}[htbp]
\label{tab:shifted-equations}
\[
\begin{array}{|c|c|}
\hline
\textbf{Shifted equation}
&
\textbf{Equivalent system of equations} \\
\hline
\begin{array}{c}
\text{Eq.~\eqref{eq:regu1} holds}\\
\bigl(P\to P+\alpha,\ S\to S+\beta,\\
x\to x+u,\ y\to y+v\bigr)
\end{array}
&
\begin{array}{l}
\text{Eq.~\eqref{eq:regu1} holds},\\
\text{Eq.~\eqref{eq:rapL1} holds}(\alpha \to \beta),\\
\text{Eq.~\eqref{eq:c42} holds}(x \to y,\ v \to u).
\end{array}
\\
\hline
\begin{array}{c}
\text{Eq.~\eqref{eq:regu2} holds}\\
\bigl(P\to P+\alpha,\ S\to S+\beta,\\
x\to x+u,\ y\to y+v\bigr)
\end{array}
&
\begin{array}{l}
\text{Eq.~\eqref{eq:regu2} holds},\\
\text{Eq.~\eqref{eq:rapL2} holds}(\alpha \to \beta,\ x \to y,\ v \to u),\\
\text{Eq.~\eqref{eq:c41} holds}.
\end{array}
\\
\hline
\end{array}
\]
\centering
\caption{The equivalence of the remaining parts between Item ~\eqref{item:a} and Item ~\eqref{item:c}} \label{table:aandc}
\end{table}

Hence, Item~\eqref{item:a} $\iff$ Item~\eqref{item:c}.

\textbf{Step2:$~\eqref{item:b} \iff ~\eqref{item:c}$.}

Based on Item~\eqref{item:a} $\iff$ Item~\eqref{item:c}, we have ~\eqref{item:b} holds if and only if
\begin{enumerate}[label=\textcircled{\scriptsize\arabic*}]
  \item $(V^*,\rho^* - \varphi^*, -\varphi^*,\beta^*)$ is a representation of $\bigl((A,\circ),P\bigr)$; \mlabel{item:b1}

  \item $S$ is admissible to $\bigl((A,\circ),P\bigr)$; \mlabel{item:b2}

  \item $\alpha^*$ is admissible to $\bigl((A,\circ),P\bigr)$ on $(V^*,\rho^* - \varphi^*, -\varphi^*)$; \mlabel{item:b3}

  \item For all $x\in A$ and $v^*\in V^*$, the following equations hold:
  \begin{align}
    &\alpha^*\bigl(\rho^* - \varphi^*)(x)\beta^*(v^*)\bigr)
    =
    (\rho^* - \varphi^*)\bigl(S(x)\bigr)\beta^*(v^*)
    =
    \alpha^*\bigl((\rho^* - \varphi^*)(S(x))v^*\bigr), \mlabel{eq:231} \\
    &\alpha^*\bigl(-\varphi^*(x)\beta^*(v^*)\bigr)
    =
    -\varphi^*\bigl(S(x)\bigr)\beta^*(v^*)
    =
    \alpha^*\bigl(-\varphi^*(S(x))v^*\bigr) \mlabel{eq:232}.
  \end{align}
\end{enumerate}
Then, the equivalence between Item~\eqref{item:b} and Item~\eqref{item:c} follows from the correspondence shown in the Table~\ref{table:bandc} below.
\begin{table}[htbp]
\begin{tabular}{|c|c|}
\hline
\textbf{Left conditions} & \textbf{Equivalent right conditions} \\
\hline
\ref{item:b1} & \eqref{item:c3} \\
\hline
\ref{item:b2} & \eqref{item:c2} \\
\hline
\ref{item:b3} & \eqref{item:c1} \\
\hline
\eqref{eq:231} & \eqref{eq:c41}, \eqref{eq:c42} \\
\hline
\eqref{eq:232} & \eqref{eq:c42} \\
\hline
\end{tabular}
\centering
\caption{The equivalence of the remaining parts between Item ~\eqref{item:a} and Item ~\eqref{item:c}} \label{table:bandc}
\end{table}

This completes the proof.
\end{proof}

\begin{theorem}
Let $\bigl((A,\circ), P\bigr)$ be an $\beta$-admissible averaging pre-Lie algebra on $(V,\rho,\varphi)$. Let $S \colon A \to A$, $\alpha \colon V \to V$ be linear maps.
Let $T \colon V \to A$ be a linear map which is identified as an element in
$V^* \otimes A \subset (A \ltimes_{\rho^*-\varphi^*,-\varphi^*} V^*) \otimes
(A \ltimes_{\rho^*-\varphi^*,-\varphi^*} V^*)$.
\begin{enumerate}
\item
The element $r = T + \tau(T)$ is a symmetric solution of the $(S+\alpha^*)$-averaging $\mathbb{S}$-equation in the averaging pre-Lie algebra $(A \ltimes_{\rho^*-\varphi^*,-\varphi^*} V^*, P+\beta^*)$ if and only if $T$ is a relative Rota-Baxter operator on $\bigl((A, \circ), P\bigr)$ with the respect to the representation $\bigl((V,\rho,\varphi), \alpha\bigr)$, and $T\beta = S T$.  \mlabel{item:srb11}

\item
Assume that $\bigl((V,\rho,\varphi),\alpha\bigr)$ is a representation of $((A,\circ),P)$. If $T$ is a relative Rota-Baxter operator on $\bigl((A, \circ), P\bigr)$ with the respect to the representation $\bigl((V,\rho,\varphi), \alpha\bigr)$ and
$T \circ \beta = S \circ T$, then $r = T + \tau(T)$ is a symmetric solution of the $(S+\alpha^*)$-averaging $\mathbb{S}$-equation in the averaging pre-Lie algebra $(A \ltimes_{\rho^*-\varphi^*,-\varphi^*} V^*, P+\beta^*)$.
Moreover, if $(A,P)$ is $S$-admissible and Eqs.~\eqref{eq:c41} and~\eqref{eq:c42} hold, then
$(A \ltimes_{\rho^*-\varphi^*,-\varphi^*} V^*, P+\beta^*)$ is $(S+\alpha^*)$-admissible.
In this case, there is an averaging pre-Lie bialgebra $(A \ltimes_{\rho^*-\varphi^*,-\varphi^*} V^*, P+\beta^*, \delta_r, S+\alpha^*)$,
where $\delta_r$ is defined by Eq.~\eqref{eq:deltar} with $r = T + \tau(T)$. \mlabel{item:srb12}
\end{enumerate}
\end{theorem}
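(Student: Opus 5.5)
Item (1) splits into two independent parts: the $\mathbb{S}$-equation part and the averaging part. Write $\hat A=A\ltimes_{\rho^*-\varphi^*,-\varphi^*}V^*$, $\hat P=P+\beta^*$, $\hat S=S+\alpha^*$.

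\textbf{The $\mathbb{S}$-equation.} Choose a basis $\{v_i\}$ of $V$ with dual basis $\{v_i^*\}$. Identify $T$ with $\sum_i v_i^*\otimes T(v_i)\in V^*\otimes A$, so that $r=T+\tau(T)=\sum_i\bigl(v_i^*\otimes T(v_i)+T(v_i)\otimes v_i^*\bigr)$ is symmetric by construction. By \cite[Theorem~6.6]{Bai08} (the result already used in Proposition~\ref{prop:rRBclYB}), the symmetric $r$ solves Eq.~\eqref{eq:cond0} in $\hat A$ if and only if $r^\sharp:\hat A^*\to\hat A$ is a relative Rota--Baxter operator for the coregular representation of $\hat A$. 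Rather than working through that, I would compute $[[r,r]]$ in $\hat A^{\otimes 3}$ directly. This uses the semi-direct product $\circ_{\heartsuit}$ of Eq.~\eqref{eq:sdp}, with $V^*\circ V^*=0$, and the dual actions Eq.~\eqref{eq:dual}. Only the components of type $V^*\otimes V^*\otimes A$ and its permutations survive. Pairing such a component against $u\otimes v\otimes\xi$ should give exactly $\langle \xi,\,T(u)\circ T(v)-T(\rho(Tu)v+\varphi(Tv)u)\rangle$, up to sign and permutation. So Eq.~\eqref{eq:cond0} is equivalent to Eq.~\eqref{eq:rerb1}. This is the classical argument of Bai, and I expect to cite \cite{Bai08} for it rather than redo it.

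\textbf{The averaging conditions.} By Remark~\ref{remark:rsym}, Eq.~\eqref{eq:cond1} and Eq.~\eqref{eq:cond2} are equivalent for symmetric $r$, so it suffices to check $(\hat S\otimes\mathrm{id})r=(\mathrm{id}\otimes\hat P)r$. Expanding,
\[
(\hat S\otimes \mathrm{id})r=\sum_i \alpha^*(v_i^*)\otimes T(v_i)+\sum_i S(T(v_i))\otimes v_i^*,
\qquad
(\mathrm{id}\otimes \hat P)r=\sum_i v_i^*\otimes P(T(v_i))+\sum_i T(v_i)\otimes \beta^*(v_i^*).
\]
The $V^*\otimes A$ components agree if and only if $\sum_i\alpha^*(v_i^*)\otimes Tv_i=\sum_i v_i^*\otimes PTv_i$. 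Since $\sum_i \alpha^*(v_i^*)\otimes Tv_i=\sum_i v_i^*\otimes T\alpha(v_i)$, this is exactly $T\circ\alpha=P\circ T$, i.e. Eq.~\eqref{eq:rerb2}. The $A\otimes V^*$ components agree if and only if $\sum_i STv_i\otimes v_i^*=\sum_i T\beta(v_i)\otimes v_i^*$, i.e. $ST=T\beta$. Together with the first part, this proves (1).

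\textbf{Item (2).} The first assertion is the backward direction of (1). For the admissibility, I apply Theorem~\ref{theorem:equiva3}, implication (c)$\Rightarrow$(b). Condition (c1) holds because $((V,\rho,\varphi),\alpha)$ is a representation. Condition (c2) is the hypothesis that $S$ is admissible. Condition (c3) is the $\beta$-admissibility assumed in the theorem. Condition (c4) is Eqs.~\eqref{eq:c41}--\eqref{eq:c42}. Hence $(\hat A,\hat P)$ is $\hat S$-admissible.

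Finally, Theorem~\ref{theorem:ssapplbi} applies to the symmetric solution $r$ and yields the averaging pre-Lie bialgebra. One caveat: Theorem~\ref{theorem:sapplbi} also requires Eq.~\eqref{eq:symmetaic}, but this is vacuous here since the skew part of a symmetric $r$ is $0$.

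\textbf{Main obstacle.} The bookkeeping in the $\mathbb{S}$-equation computation in $\hat A$ is the hard step, in particular getting the signs of the dual actions $\rho^*-\varphi^*$ and $-\varphi^*$ right. I would rely on \cite{Bai08} for this, and only verify by hand the averaging components, which are the genuinely new part.
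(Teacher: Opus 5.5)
Your proposal is correct and follows the paper's proof. The paper likewise delegates the $\mathbb{S}$-equation part to the known $\mathcal{O}$-operator characterization (citing \cite[Theorem~5.26]{GM} where you cite Bai), and handles the averaging part by the same componentwise comparison of $\bigl((S+\alpha^*)\otimes\mathrm{id}\bigr)(r)$ and $\bigl(\mathrm{id}\otimes(P+\beta^*)\bigr)(r)$ in $V^*\otimes A$ and $A\otimes V^*$, yielding $PT=T\alpha$ and $ST=T\beta$, with Eq.~\eqref{eq:cond2} following from symmetry. It then obtains item (2) from Theorem~\ref{theorem:equiva3}, and you are more explicit than the paper there: you check (c1)--(c4) and invoke Theorem~\ref{theorem:ssapplbi}, noting that Eq.~\eqref{eq:symmetaic} is vacuous for symmetric $r$.
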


\begin{proof}
~\eqref{item:srb11}. Let $\{e_1,e_2,\ldots,e_n\}$ be a basis of $V$ and $\{e^1,e^2,\ldots,e^n\}$ be its dual basis. Then $r=T+\tau(T)$ corresponds to
\[
\sum_{i=1}^n \bigl(T(e_i)\otimes e^i + e^i \otimes T(e_i)\bigr)
\in
(A \ltimes_{\rho^*,-\varphi^*} V^*)
\otimes
(A \ltimes_{\rho^*,-\varphi^*} V^*).
\]
By \cite[Theorem~5.26]{GM}, we find that $r=T+\tau(T)$ is a symmetric solution of the $S$-equation in $A \ltimes_{\rho^*-\varphi^*,-\varphi^*} V^*$ if and only if Eq.~\eqref{eq:sharp1} holds for the regular representation $(A \ltimes_{\rho^*,-\varphi^*} V^*;\,L,R)$ of
$A \ltimes_{\rho^*-\varphi^*,-\varphi^*} V^*$.
Note that
\begin{align*}
\bigl( \mathrm{id}\otimes(P+\beta^*) \bigr)(r)
&=
\sum_{i=1}^n \Bigl( e^i \otimes P(T(e_i)) + T(e_i) \otimes \beta^*(e^i)\Bigr),\\
\bigl((S+\alpha^*\otimes \mathrm{id}\bigr)(r)
&=
\sum_{i=1}^n \Bigl( \alpha^*(e^i)\otimes T(e_i) + S(T(e_i)) \otimes e^i \Bigr).
\end{align*}
Further,
\begin{align*}
\sum_{i=1}^n \beta^*(e^i)\otimes T(e_i)
&=
\sum_{i=1}^n \sum_{j=1}^n \langle \beta^*(e^i), e_j\rangle\, e^j \otimes T(e_i)\\
&=
\sum_{j=1}^n e^j \otimes \sum_{i=1}^n \langle e^i, \beta(e_j)\rangle\, T(e_i)\\
&=
\sum_{i=1}^n e^i \otimes \sum_{j=1}^n \langle e^j, \beta(e_i)\rangle\, T(e_j)
=
\sum_{i=1}^n e^i \otimes T(\beta(e_i)),
\\[1ex]
\sum_{i=1}^n T(e_i)\otimes \alpha^*(e^i)
&=
\sum_{i=1}^n \sum_{j=1}^n T(e_i)\otimes \langle \alpha^*(e^i), e_j\rangle\, e^j\\
&=
\sum_{j=1}^n \Bigl( \sum_{i=1}^n T(e_i)\langle e^i, \alpha(e_j)\rangle \Bigr)\otimes e^j
=
\sum_{i=1}^n T(\alpha(e_i))\otimes e^i.
\end{align*}

Therefore,
\[
\bigl( \mathrm{id}\otimes(P+\beta^*) \bigr)(r)
=
\bigl((S+\alpha^*)\otimes \mathrm{id}\bigr)(r)
\]
if and only if
\[
T\beta = S T
\quad \text{and} \quad
P T = T\alpha.
\]
This completes the proof.

~\eqref{item:srb12}. It follows Item ~\eqref{item:srb11} and Theorem ~\ref{theorem:equiva3}.
\end{proof}

\section{Balanced averaging pre-Lie bialgebras} \label{sec:balancedaverpl}

In this section, we introduce a new notion of averaging Lie bialgebras and show that, under suitable conditions, an averaging Lie bialgebra can be obtained from an averaging pre-Lie bialgebra. We start by recalling the notions of Lie coalgebras, Lie bialgebras and averaging Lie algebras. More details can be found in \cite{HSZh26}.

\begin{defn}
A \textbf{Lie coalgebra} is a pair $(A,\delta)$, where
\begin{enumerate}
\item $A$ is a vector space;
\item $\delta: A \to A\otimes A$ is a linear map;
\item $\delta$ is \textbf{antisymmetric}, that is,
\begin{align*}
\delta = -\,\tau\circ\delta;
\end{align*}

\item $\delta$ satisfies the \textbf{co-Jacobi identity}:
\begin{align*}
(\mathrm{id}_A  + \sigma + \sigma^2)(\mathrm{id}_A \otimes\delta)\delta = 0,
\end{align*}
where $\sigma(x\otimes y\otimes z):=z\otimes x\otimes y$.
\end{enumerate}
\end{defn}

\begin{defn}
A \textbf{Lie bialgebra} is a triple $(A,[\cdot,\cdot],\Delta)$, where
\begin{enumerate}
\item $(A,[\cdot,\cdot])$ is a Lie algebra;
\item $(A,\delta)$ is a Lie coalgebra;
\item For all $x,y\in A$, the following compatibility condition holds:
\begin{align*}
\delta([x,y])
= (\mathrm{ad}_x\otimes \mathrm{id}_A + \mathrm{id}_A \otimes \mathrm{ad}_x)\delta(y)
- (\mathrm{ad}_y\otimes \mathrm{id}_A + \mathrm{id}_A \otimes \mathrm{ad}_y)\delta(x).
\end{align*}
\end{enumerate}
\end{defn}

\begin{defn}
Let $(A,[\cdot,\cdot])$ be a Lie algebra and $P : A \to A$ a linear map.
If
\begin{align*}
[P(x) , P(y)] = P\Big([P(x),y] \Big) = P\Big([x , P(y) ]\Big) ,
\qquad x,y \in A, \mlabel{eq:ala}
\end{align*}
then we call $P$ an \textbf{averaging operator} on $(A,[\cdot,\cdot])$ and the pair $\big( (A,[\cdot,\cdot]), P \big)$ an \textbf{averaging Lie algebra}.
\end{defn}

Next, we introduce the notions of averaging Lie coalgebras and averaging Lie bialgebras.

\begin{defn}
An \textbf{averaging Lie coalgebra} is a pair $\big( (A,\delta), S\big)$, where $(A,\delta)$ is a Lie coalgebra and $S : A \to A$ is a linear map such that
\begin{equation}
(S \otimes S)\delta(x) = (S \otimes \mathrm{id}_A)\delta(S(x))) = (\mathrm{id}_A \otimes S)\delta(S(x)))
\qquad x \in A.  \mlabel{eq:apLieco}
\end{equation}
\end{defn}

\begin{defn}\label{def:averagingLiebialgebra}
An \textbf{averaging Lie bialgebra} is a quintuple
\[
(A,[\cdot,\cdot],\delta,P,S),
\]
where
\begin{enumerate}
\item $\bigl((A,[\cdot,\cdot]),P\bigr)$ is an averaging Lie algebra;

\item $\bigl((A,\delta),S\bigr)$ is an averaging Lie coalgebra;

\item $(A,[\cdot,\cdot],\delta)$ is a Lie bialgebra;

\item For all $x,y\in A$, the following compatibility conditions hold:
\begin{equation}
[P(x),S(y)]
= S\bigl([P(x),y]\bigr)
= S\bigl([x,S(y)]\bigr), \mlabel{eq:aLbi1}
\end{equation}
\begin{equation}
(S\otimes P)\delta(x)
= (S\otimes \mathrm{id}_A)\delta(P(x))
= (\mathrm{id}_A \otimes P)\delta(P(x)). \mlabel{eq:aLbi2}
\end{equation}
\end{enumerate}
\end{defn}

\begin{remark}
When $S=P$, the notion of an averaging Lie bialgebra in the sense of Definition~\ref{def:averagingLiebialgebra}
coincides with the notion of an averaging Lie bialgebra introduced in \cite[Definition~4.4]{HSZh26}.
\end{remark}

\begin{defn}\cite{GM}
A pre-Lie bialgebra $(A,\circ,\Delta)$ is called \textbf{balanced} if
\[
x_{(1)}\circ y \otimes x_{(2)}
+ y_{(2)}\otimes y_{(1)}\circ x
=
y_{(1)}\circ x \otimes y_{(2)}
+ x_{(2)}\otimes x_{(1)}\circ y,
\qquad \forall\, x,y\in A.
\]
\end{defn}

\begin{theorem}\cite{GM}\label{Theorem:Liebi}
Let $(A,\circ,\Delta)$ be a pre-Lie bialgebra. For $\forall\, x,y\in A$, we define
\begin{equation}
[x,y]:=x\circ y-y\circ x, \mlabel{eq:bplbi1}
\end{equation}
\begin{equation}
\delta(x):=\Delta(x) - \tau(\Delta(x)). \mlabel{eq:bplbi2}
\end{equation}
Then $(A,[\cdot,\cdot],\delta)$ is a Lie bialgebra if and only if $(A,\circ,\Delta)$ is balanced.
\end{theorem}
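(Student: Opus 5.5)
We prove the two directions separately, using the structure theory of pre-Lie bialgebras already available. The Lie bracket and cobracket are \(\lambda\)-linear antisymmetrizations, so the proof reduces to standard facts about the commutator Lie algebra plus a dual statement, and the compatibility condition is where balancedness enters.

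\textbf{Lie algebra and Lie coalgebra parts.} The commutator \([x,y]=x\circ y-y\circ x\) of Eq.~\eqref{eq:bplbi1} is a Lie bracket on any pre-Lie algebra. Antisymmetry is obvious. For Jacobi, expand the cyclic sum and regroup the six associator terms into three differences \((x,y,z)-(y,x,z)\), each of which vanishes by Eq.~\eqref{eq:preLie}. Dually, \(\delta=\Delta-\tau\Delta\) of Eq.~\eqref{eq:bplbi2} is the transpose of the commutator of \(\circ_{A^*}\). Since \(\Delta^*\) is pre-Lie by Eq.~\eqref{eq:preLieco}, \((A^*,[\cdot,\cdot]_{A^*})\) is a Lie algebra, and dualizing gives antisymmetry and co-Jacobi for \(\delta\). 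These two parts hold with no extra hypothesis.

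\textbf{Compatibility condition.} Compute \(\delta([x,y])=\Delta(x\circ y)-\Delta(y\circ x)-\tau(\Delta(x\circ y)-\Delta(y\circ x))\). I would feed in the two pre-Lie bialgebra identities. The first expresses \(\Delta([x,y])\) through \(L_x\otimes\mathrm{id}+\mathrm{id}\otimes(L_x-R_x)\). The second expresses \(\Delta(x\circ y)-\tau\Delta(x\circ y)\), and subtracting its \(x\leftrightarrow y\) swap also gives \((\mathrm{id}-\tau)\Delta([x,y])\). Taking the first identity and applying \(\mathrm{id}-\tau\) yields
\[
\delta([x,y])=(\mathrm{id}-\tau)\bigl[(L_x\otimes\mathrm{id}+\mathrm{id}\otimes(L_x-R_x))\Delta(y)-(L_y\otimes\mathrm{id}+\mathrm{id}\otimes(L_y-R_y))\Delta(x)\bigr].
\]
The target is \((\mathrm{ad}_x\otimes\mathrm{id}+\mathrm{id}\otimes\mathrm{ad}_x)(\Delta(y)-\tau\Delta(y))-(x\leftrightarrow y)\), where \(\mathrm{ad}_x=L_x-R_x\).

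Now compare term by term in Sweedler notation. The \(L\)-terms match, since \((\mathrm{id}-\tau)\) commutes with \(L_x\otimes\mathrm{id}+\mathrm{id}\otimes L_x\). The difference between the two sides consists only of the terms involving right multiplications applied in the first leg. After cancellation I expect it to equal
\[
\bigl(x_{(1)}\circ y\otimes x_{(2)}+y_{(2)}\otimes y_{(1)}\circ x\bigr)-\bigl(y_{(1)}\circ x\otimes y_{(2)}+x_{(2)}\otimes x_{(1)}\circ y\bigr)
\]
up to an overall sign, possibly with a \(\tau\) applied.

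Hence the Lie bialgebra compatibility holds if and only if this expression vanishes for all \(x,y\), which is exactly the balanced condition. This gives both directions at once.

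\textbf{Main obstacle.} The main risk is the bookkeeping in this cancellation. I need to confirm that the leftover is precisely the balanced expression and not that expression plus something only killed by the second bialgebra identity. If extra terms survive, I will substitute the second identity in place of the first, since it directly controls \((\mathrm{id}-\tau)\Delta(x\circ y)\). I would then redo the comparison and check that the residual collapses to the balanced condition.
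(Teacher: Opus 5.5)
Your proposal is correct, and the cancellation you flagged as the main risk closes exactly as you predicted. Because $\mathrm{ad}_x\otimes\mathrm{id}+\mathrm{id}\otimes\mathrm{ad}_x$ commutes with $\tau$, the target equals $(\mathrm{id}-\tau)$ applied to $(\mathrm{ad}_x\otimes\mathrm{id}+\mathrm{id}\otimes\mathrm{ad}_x)\Delta(y)$ minus the same expression with $x,y$ swapped. On the other side, $\delta([x,y])$ is $(\mathrm{id}-\tau)$ applied to the right-hand side of the first pre-Lie bialgebra identity. The operators $L_x\otimes\mathrm{id}+\mathrm{id}\otimes\mathrm{ad}_x$ and $\mathrm{ad}_x\otimes\mathrm{id}+\mathrm{id}\otimes\mathrm{ad}_x$ differ only by $-R_x\otimes\mathrm{id}$ (resp.\ $-R_y\otimes\mathrm{id}$). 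Hence the target minus $\delta([x,y])$ is
\[
(\mathrm{id}-\tau)\bigl((R_y\otimes\mathrm{id})\Delta(x)-(R_x\otimes\mathrm{id})\Delta(y)\bigr)
=x_{(1)}\circ y\otimes x_{(2)}+y_{(2)}\otimes y_{(1)}\circ x-y_{(1)}\circ x\otimes y_{(2)}-x_{(2)}\otimes x_{(1)}\circ y .
\]
This is the balanced expression on the nose, with no sign change and no extra $\tau$. So the second pre-Lie bialgebra identity is never used, and your fallback plan is unnecessary.

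The paper itself gives no proof of this statement; it is quoted from \cite{GM}. Your argument therefore serves as a self-contained proof rather than a variant of one in the text, and two features of it are worth noting. First, your computation shows the equivalence depends only on the cocycle identity for $\Delta(x\circ y-y\circ x)$. It therefore holds for any pre-Lie algebra and pre-Lie coalgebra satisfying that identity alone. Second, you obtain antisymmetry and co-Jacobi by dualizing to the commutator Lie algebra of $(A^*,\circ_{A^*})$, which is legitimate under the paper's standing finite-dimensionality assumption. This replaces the twelve-term Sweedler expansion the paper writes out in Step~2 of the proof of Theorem~\ref{theorem:aplb-avlb}. The direct expansion avoids duals but is longer and harder to check.
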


Averaging Lie bialgebras can be obtained from averaging pre-Lie bialgebras in the following way.

\begin{theorem}\label{theorem:aplb-avlb}
Let $(A,\circ,\Delta,P,S)$ be an averaging pre-Lie bialgebra. Then $(A,[\cdot,\cdot],\delta,P,S)$, where $[\cdot,\cdot]$ and $\delta$ are given by Eqs.~\eqref{eq:bplbi1} and ~\eqref{eq:bplbi2}, is an averaging Lie bialgebra if and only if $(A,\circ,\Delta)$ is balanced.
\begin{proof}
We split the proof into four steps.

\textbf{Step 1.}
By \cite[Proposition~5.17]{HSZh26}, we know that $\bigl((A,[\cdot,\cdot]),P\bigr)$ is an averaging Lie algebra.

\textbf{Step 2.}
We prove that $\bigl((A,\delta),S\bigr)$ is an averaging Lie coalgebra.

First, $\delta$ is antisymmetric. Indeed,
\begin{align*}
\delta(x)
\overset{\eqref{eq:bplbi2}}{=} \Delta(x)-\tau\bigl(\Delta(x)\bigr)
\overset{\tau^2=\mathrm{id}_{A\otimes A}}{=} -\,\tau\Bigl(\Delta(x)-\tau\bigl(\Delta(x)\bigr)\Bigr)
\overset{\eqref{eq:bplbi2}}{=} -\,\tau\bigl(\delta(x)\bigr).
\end{align*}
Second, $\delta$ satisfies co-Jacobi identity.
\begin{align*}
(\mathrm{id}_A +\sigma+\sigma^{2})(\mathrm{id}_A \otimes\delta)\delta(x)
&\overset{\eqref{eq:bplbi2}}{=}(\mathrm{id}_A +\sigma+\sigma^{2})(\mathrm{id}_A \otimes\delta)
\bigl(x_{(1)}\otimes x_{(2)}-x_{(2)}\otimes x_{(1)}\bigr) \\
&\overset{\eqref{eq:bplbi2}}{=}(\mathrm{id}_A +\sigma+\sigma^{2})
\Bigl(x_{(1)}\otimes x_{(2)(1)}\otimes x_{(2)(2)}
- x_{(1)}\otimes x_{(2)(2)}\otimes x_{(2)(1)}\\
& \qquad
- x_{(2)}\otimes x_{(1)(1)}\otimes x_{(1)(2)}
+ x_{(2)}\otimes x_{(1)(2)}\otimes x_{(1)(1)} \Bigr)\\
&=
x_{(1)}\otimes x_{(2)(1)}\otimes x_{(2)(2)}
- x_{(1)}\otimes x_{(2)(2)}\otimes x_{(2)(1)}
- x_{(2)}\otimes x_{(1)(1)}\otimes x_{(1)(2)}\\
&\quad
+ x_{(2)}\otimes x_{(1)(2)}\otimes x_{(1)(1)}
+ x_{(2)(2)}\otimes x_{(1)}\otimes x_{(2)(1)}
- x_{(2)(1)}\otimes x_{(1)}\otimes x_{(2)(2)}\\
&\quad
- x_{(1)(2)}\otimes x_{(2)}\otimes x_{(1)(1)}
+ x_{(1)(1)}\otimes x_{(2)}\otimes x_{(1)(2)}
+ x_{(2)(1)}\otimes x_{(2)(2)}\otimes x_{(1)}\\
&\quad
- x_{(2)(2)}\otimes x_{(2)(1)}\otimes x_{(1)}
- x_{(1)(1)}\otimes x_{(1)(2)}\otimes x_{(2)}
+ x_{(1)(2)}\otimes x_{(1)(1)}\otimes x_{(2)}\\
&\overset{\eqref{eq:preLieco}}{=} 0 .
\end{align*}
Finally, $S$ is an averaging operator on $(A, \delta)$.
\begin{align*}
(S\otimes S)\,\delta(x)
&\overset{\eqref{eq:bplbi2}}{=} (S\otimes S)\bigl(\Delta(x)-\tau(\Delta(x))\bigr)\\
&= (S\otimes S)\Delta(x)\;-\;\tau\bigl((S\otimes S)\Delta(x)\bigr)\\
&\overset{\eqref{eq:apLieco}}{=} (S\otimes \mathrm{id}_A)\Delta\bigl(S(x)\bigr)\;-\;\tau\bigl((\mathrm{id}_A\otimes S)\Delta(S(x))\bigr)\\
&= (S\otimes \mathrm{id}_A)\Bigl(\Delta(S(x))-\tau(\Delta(S(x)))\Bigr)\\
&\overset{\eqref{eq:bplbi2}}{=} (S\otimes \mathrm{id}_A)\,\delta\bigl(S(x)\bigr).
\end{align*}
Similarly, $(S\otimes S)\,\delta(x) = (\mathrm{id}_A \otimes S)\,\delta\bigl(S(x)\bigr)$.

\textbf{Step 3.}
By Theorem ~\ref{Theorem:Liebi}, $(A,[\cdot,\cdot],\delta)$ is a Lie bialgebra if and only if $(A,\circ,\Delta)$ is balanced.

\textbf{Step 4.}
We prove that compatibility conditions Eq.~\eqref{eq:aLbi1} and Eq.~\eqref{eq:aLbi2} hold.
\begin{align*}
[P(x),\, S(y)]
\overset{\eqref{eq:bplbi1}}{=} P(x)\circ S(y) - S(y)\circ P(x)
\overset{\eqref{eq:regu1} \eqref{eq:regu2}}{=} S\bigl(P(x)\circ y\bigr) - S\bigl(y\circ P(x)\bigr)
\overset{\eqref{eq:bplbi1}}{=} S\bigl([P(x),\, y]\bigr),
\end{align*}
\begin{align*}
[P(x),\, S(y)]
\overset{\eqref{eq:bplbi1}}{=} P(x)\circ S(y) - S(y)\circ P(x)
\overset{\eqref{eq:regu1} \eqref{eq:regu2}}{=} S\bigl(x\circ S(y) - S(y)\circ x\bigr)
\overset{\eqref{eq:bplbi1}}{=} S\bigl([x,\, S(y)]\bigr).
\end{align*}
\begin{align*}
(S\otimes P)\,\delta(x)
&\overset{\eqref{eq:bplbi1}}{=} (S\otimes P)\Delta(x) - (S\otimes P)\,\tau(\Delta(x)) \\
&= (S\otimes P)\Delta(x) - \tau\bigl((P\otimes S)\Delta(x)\bigr) \\
&\overset{\eqref{eq:aplb2}}{=} (S\otimes \mathrm{id}_A)\Delta\bigl(P(x)\bigr)
   - \tau\bigl((\mathrm{id}_A\otimes S)\Delta(P(x))\bigr) \\
&= (S\otimes \mathrm{id}_A)\Delta\bigl(P(x)\bigr)
   - (S\otimes \mathrm{id}_A)\tau\bigl(\Delta(P(x))\bigr) \\
&\overset{\eqref{eq:bplbi1}}{=} (S\otimes \mathrm{id}_A)\,\delta\bigl(P(x)\bigr).
\end{align*}
Similarly, $(S\otimes P)\,\delta(x) = (\mathrm{id}_A\otimes P)\,\delta\bigl(P(x)\bigr)$.
Then, we have completed this proof.
\end{proof}
\end{theorem}

\medskip
\noindent
\textbf{Further problems.}
As is well known, there is a relationship among dendriform algebras, associative algebras, Lie algebras, and pre-Lie algebras as follows.
\[
\xymatrix@C=5.5em@R=4.0em{
\parbox{4.2cm}{\centering
Dendriform algebras\\
$(A,\prec,\succ)$}
  \ar[r]^{x*y=x\prec y+x\succ y}
  \ar[d]_{x\circ y=x\succ y-y\prec x}
&
\parbox{4.2cm}{\centering
Associative algebras\\
$(A,*)$}
  \ar[d]^{[x,y]=x*y-y*x}
\\
\parbox{4.2cm}{\centering
pre-Lie algebras\\
$(A,\circ)$}
  \ar[r]_{[x,y]=x\circ y-y\circ x}
&
\parbox{4.2cm}{\centering
Lie algebras\\
$(A,[\cdot,\cdot])$}
}
\]
Motivated by this, it is natural to ask whether an analogous commutative diagram also holds at the level of averaging structures, that is, whether the following diagram is commutative.
\[
\xymatrix@C=4.5em@R=3.0em{
\text{Averaging dendriform D-bialgebras}
  \ar[r]
  \ar[d]
&
\text{Averaging anti-symmetric infinitesimal bialgebras}
  \ar[d]
\\
\text{Averaging pre-Lie bialgebras}
  \ar[r]
&
\text{Averaging Lie bialgebras}
}
\]
This problem is meaningful and worth investigating. However, due to space limitations, we do not provide a detailed proof here and leave a thorough verification for future work.

\vspace{0.5cm}
\smallskip
\noindent
{\bf Acknowledgments.} This work is supported by Natural Science Foundation of China (Grant No. 12101183). Y. Y. Zhang is also supported by the Postdoctoral Fellowship Program of CPSF under Grant Number (GZC20240406), the Henan Provincial Selective Research Funding Program for Returned Scholars Studying Abroad (HNLX202613) and Sponsored by Natural Science Foundation of Henan (262300421229).

\smallskip
\noindent
{\bf Data availability} All datasets underlying the conclusions of the paper are available to readers.

\smallskip
\noindent
{\bf Conflict of interest} The authors declare that there is no conflict of interest.

\end{document}